\numberwithin{equation}{section}
\newtheorem{theorem}{Theorem}
\newtheorem{definition}[theorem]{Definition}
\newtheorem{lemma}[theorem]{Lemma}
\newtheorem{proposition}[theorem]{Proposition}
\newtheorem{remark}[theorem]{Remark}
\newcommand{\be}{\begin{equation}}
\newcommand{\ee}{\end{equation}}
\newcommand{\Z}{{\mathbb Z}}
\newcommand{\R}{{\mathbb R}}
\newcommand{\p}{\parallel}
\newcommand{\rarrow}{\rightarrow}
\newcommand{\vphi}{\varphi}
\newcommand{\sig}{\sigma}
\newcommand{\lam}{\lambda}
\newcommand{\om}{\omega}
\begin{document}

\footnotesize {\flushleft \mbox{\bf \textit{Math. Model. Nat.
Phenom.}}}
 \\
\mbox{\textit{{\bf Vol. 5, No. 4, 2010, pp. }}}

\medskip

\thispagestyle{plain}

\vspace*{2cm} \normalsize \centerline{\Large \bf An Exposition of
the Connection between} \centerline{\Large \bf Limit-Periodic Potentials and Profinite Groups}

\vspace*{1cm}

\centerline{\bf Z. Gan \footnote{E-mail: zheng.gan@rice.edu}}

\vspace*{0.5cm}

\centerline{Department of Mathematics, Rice University, 77005
Houston, USA}

\vspace*{1cm}

\centerline{\em Dedicated to the memory of M. Sh. Birman.}


\vspace*{1cm}

\noindent {\bf Abstract.}
We classify the hulls of different limit-periodic potentials and
show that the hull of a limit-periodic potential is a procyclic
group. We describe how limit-periodic potentials can be generated
from a procyclic group and answer arising questions. As an
expository paper, we discuss the connection between limit-periodic
potentials and profinite groups as completely as possible and review
some recent results on Schr\"odinger operators obtained in this
context.
\vspace*{0.5cm}

\noindent {\bf Key words:} profinite group, limit-periodic potential, Schr\"odinger
operator

\noindent {\bf AMS subject classification:} 47B36, 22D12, 81Q10


\vspace*{1cm}

\setcounter{equation}{0}
\section{Introduction}

This expository paper is motivated by recent research on
limit-periodic Schr\"odinger operators \cite{a,dg1,dg2,dg3}. In
\cite{a}, Avila gave a new way to treat limit-periodic potentials by
regarding limit-periodic potentials as generated by continuous
sampling function along the orbits of a minimal translation of a
Cantor group, allowing one to separate the base dynamics and the sampling function.
Damanik and I discussed this connection in \cite{dg1,dg2,dg3}. However, those
discussions weren't very comprehensive, since the goals of these
works (including Avila's \cite{a}) were to study spectral properties
of the associated Schr\"odinger operators, but not the connection
itself. There are still some questions waiting to be clarified. 

By that definition, Cantor groups belong to a larger class of groups,
profinite groups, that will help us to understand Cantor groups better
(we shall see that a Cantor group, that admits a minimal translation, is a procyclic group). 
The main tasks of this paper are: (1)\begin{em} classification of the hulls of
different limit-periodic potentials;
\end{em}
(2)\begin{em}
characterization of the hull of a limit-periodic potential.
\end{em}
As an expository paper, we will cover related questions with the
goal to provide a reference for future study in limit-periodic
Schr\"odinger operators and other related topics.


\vspace*{0.5cm}
\setcounter{equation}{0}
\section{Classification of the Hulls of Limit-Periodic Potentials}

\begin{definition}
Given a topological group $\Omega$, a map $T : \Omega \to \Omega$ is
called a translation if $T (\omega) = \omega \cdot \omega_0$ for some
$\omega_0 \in \Omega$, and $T$ is called minimal if the orbit $\{
T^n(\omega) : n \in \Z \}$ of every $\omega \in \Omega$ is dense in
$\Omega$.
\end{definition}

Let $\sig$ be the left shift operator on $\ell^\infty(\Z)$, that is,
$(\sig(d))_n = d_{n + 1}$ for every $d \in \ell^\infty(\Z).$ Let
$\mathrm{orb}(d) = \{ \sig^k (d) : k \in \Z\}$, and let
$\mathrm{hull}(d)$ be the closure of $\mathrm{orb}(d)$ in
$\ell^\infty (\Z).$

\begin{definition}
A potential $p \in \ell^\infty(\Z)$ is called periodic if
$\mathrm{orb}(p)$ is finite, and $p$ is called limit-periodic if it
belongs to the closure of the set of periodic potentials.
\end{definition}

For a limit-periodic potential $d \in \ell^{\infty}(\Z)$,  it is
easy to see that every $d^{'} \in \mathrm{hull}(d)$ is still
limit-periodic. We also have

\begin{proposition} \label{prop:avilahull} \emph{\cite[Lemma
2.1]{a}}
If $d$ is a limit-periodic potential, then $\mathrm{hull}(d)$ is
compact and has a unique topological group structure with identity
$\sig^0(d) = d$ such that
$$\phi : \Z \longrightarrow  \mathrm{hull}(d),\quad k \longrightarrow \sig^{k}(d)$$
is a homomorphism. Also, the group structure is Abelian and there
exist arbitrarily small compact open neighborhoods of $d$ in
$\mathrm{hull}(d)$ that are finite index subgroups.
\end{proposition}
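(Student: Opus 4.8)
\emph{Proof proposal.} I would establish four things in turn: compactness of $\mathrm{hull}(d)$; existence of the claimed topological group structure, obtained by extending an operation already visible on $\mathrm{orb}(d)$; its uniqueness; and the existence of arbitrarily small finite-index subgroups. For compactness, note that $\mathrm{hull}(d)$ is closed in the complete metric space $\ell^\infty(\Z)$, so it suffices to show it is totally bounded, and for that it is enough that $\mathrm{orb}(d)$ is totally bounded. Given $\eps>0$, pick a periodic $q$ with $\|d-q\|_\infty<\eps/2$ and let $N$ be a period of $q$; then $\mathrm{orb}(q)$ has at most $N$ elements, and picking one exponent $k$ for each we obtain points $\sig^{k_1}(d),\dots,\sig^{k_r}(d)$, $r\le N$. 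Since $\sig$ is an isometry, for any $m$ there is an $i$ with $\sig^m(q)=\sig^{k_i}(q)$, whence $\|\sig^m(d)-\sig^{k_i}(d)\|_\infty\le\|\sig^m(d)-\sig^m(q)\|_\infty+\|\sig^{k_i}(q)-\sig^{k_i}(d)\|_\infty<\eps$. So $\mathrm{orb}(d)$ has a finite $\eps$-net, and hence $\mathrm{hull}(d)$ is compact.

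For the group structure, on $\mathrm{orb}(d)$ set $\sig^j(d)+\sig^k(d):=\sig^{j+k}(d)$. This is well defined because $\sig^j(d)=\sig^{j'}(d)$ forces $d$ to be $(j-j')$-periodic, so the subgroup $K:=\{k\in\Z:\sig^k(d)=d\}$ is the precise obstruction and $(\mathrm{orb}(d),+)\cong\Z/K$ is an abelian group with identity $d=\sig^0(d)$. The key point is uniform continuity: the triangle inequality together with the fact that $\sig$ is an isometry gives
\[
\|\sig^{j+k}(d)-\sig^{j'+k'}(d)\|_\infty\le\|\sig^{j}(d)-\sig^{j'}(d)\|_\infty+\|\sig^{k}(d)-\sig^{k'}(d)\|_\infty ,
\]
and $\sig^j(d)\mapsto\sig^{-j}(d)$ is in fact an isometry. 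Since $\mathrm{hull}(d)$ is compact, hence complete, both operations extend uniquely to continuous maps on $\mathrm{hull}(d)$; associativity, commutativity and the identity/inverse laws hold on the dense set $\mathrm{orb}(d)$ and therefore, by continuity and Hausdorffness, on all of $\mathrm{hull}(d)$. Thus $\mathrm{hull}(d)$ is a compact abelian topological group with identity $d$ and $\phi:k\mapsto\sig^k(d)$ a homomorphism. For uniqueness: if $\ast$ is any topological group operation on $\mathrm{hull}(d)$ with identity $d$ making $\phi$ a homomorphism, then $\sig^j(d)\ast\sig^k(d)=\phi(j+k)=\sig^j(d)+\sig^k(d)$ on the dense set $\mathrm{orb}(d)$, so $\ast$ agrees with $+$ by continuity, and likewise for inverses.

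For the small finite-index subgroups, fix $\delta>0$, choose periodic $q$ with $\|d-q\|_\infty<\delta/2$, let $N$ be a period of $q$, and put $H:=\overline{\phi(N\Z)}$. Since the closure of a subgroup of a topological group is a subgroup, $H$ is a closed, hence compact, subgroup. Decomposing $\Z$ into the cosets $i+N\Z$ with $0\le i<N$ and using that translations are homeomorphisms and that closure commutes with finite unions, $\mathrm{hull}(d)=\bigcup_{i=0}^{N-1}\bigl(\sig^i(d)+H\bigr)$, so $[\mathrm{hull}(d):H]\le N$; the complement of $H$ is then a finite union of closed cosets, so $H$ is open, hence an open neighbourhood of $d$. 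Finally $H$ is small: since $\sig^{Nk}(q)=q$ for all $k$,
\[
\|\sig^{Nk}(d)-d\|_\infty\le\|\sig^{Nk}(d)-\sig^{Nk}(q)\|_\infty+\|q-d\|_\infty=2\|d-q\|_\infty<\delta ,
\]
so $\phi(N\Z)$, and hence $H$, lies within $\delta$ of $d$. Letting $\delta\to0$ gives the required family.

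The routine part is the point-set-topology bookkeeping — extending uniformly continuous maps and manipulating closures and cosets — and the only mild subtlety is the well-definedness of $+$ when $d$ happens to be periodic, handled by the quotient $\Z/K$. The one genuinely limit-periodic ingredient, and the conceptual heart of the statement since it is what ultimately forces $\mathrm{hull}(d)$ to be profinite, is the last estimate: approximating $d$ by an $N$-periodic potential automatically confines the entire subgroup $\phi(N\Z)$ to a small neighbourhood of the identity.
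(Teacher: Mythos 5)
Your proof is correct and complete; the paper itself states this proposition without proof, citing Avila's Lemma 2.1, and your argument is essentially the standard one used there (total boundedness of the orbit via periodic approximants, extension of the obvious group law on $\mathrm{orb}(d)$ by uniform continuity of $\sig$ as an isometry, and closures of $\phi(N\Z)$ as the small compact open finite-index subgroups). No gaps to report.
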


The last statement in Proposition \ref{prop:avilahull} tells us that
$\mathrm{hull}(d)$ is totally disconnected. The translation $T:
\mathrm{hull}(d) \to \mathrm{hull}(d),\ \omega \to  \omega \cdot
\sig(d) = \sig(\omega)$ is minimal, since $\{ \sig^k(d) : k \in \Z
\}$ is dense in $\mathrm{hull}(d)$.

Let us denote $\mathrm{hull}(d)$ by $\Omega_d$. The dual group,
$\hat{\Omega}_d$, of characters on $\Omega_d$ is naturally a
topological subgroup of the circle group $C$, the multiplicative
group of all complex numbers of absolute value $1$, since the dual
group of $\Z$ is $C$. By taking inverse image under the map $\R \to
C, \alpha \to e^{i\alpha }$, we obtain a subgroup $F_d$ of $\R$,
called the \textit{frequency module} of $d$. $F_d$ is countable
since $\Omega_d$ has a countable dense subset $\mathrm{orb}(d)$, and
it is a $\Z$-module.

\begin{proposition} \label{prop:generator} \emph{\cite[Theorem
A.1.1]{as}}
The frequency module $F_d$, is the $\Z$-module generated by
$$\{ \alpha : \lim_{n \to \infty} \frac{1}{2n} \sum^{n}_{k=-n} d(k) e^{-ik\alpha} \neq 0, \alpha \in \R \}.$$
\end{proposition}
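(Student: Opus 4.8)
The plan is to recognize the quantity in the statement as a Birkhoff average on the hull, to identify it with a Fourier coefficient of the ``sampling function'', and then to prove that the characters appearing in that Fourier expansion generate the dual group. Throughout write $\Omega_d=\mathrm{hull}(d)$, which by Proposition~\ref{prop:avilahull} is a compact Abelian group with identity $d$ on which $\sig$ restricts to the minimal translation $T\colon\omega\mapsto\omega\cdot\sig(d)=\sig(\omega)$; let $\mu$ be its normalized Haar measure, let $\phi\colon\Z\to\Omega_d$, $k\mapsto\sig^k(d)$, be the dense homomorphism, and let $f\colon\Omega_d\to\C$ be the restriction to $\Omega_d$ of evaluation at the coordinate $0$, so that $f$ is continuous and $d(k)=f(T^k d)$ for all $k\in\Z$. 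Since a minimal translation of a compact group is uniquely ergodic with Haar measure as its only invariant probability measure, the symmetric averages $\frac{1}{2n}\sum_{k=-n}^{n}g(T^k\omega)$ converge to $\int_{\Omega_d}g\,d\mu$ for every $g\in C(\Omega_d)$ and every $\omega$ (apply unique ergodicity to $T$ and to $T^{-1}$, the passage from $\frac1{2n+1}$ to $\frac1{2n}$ being harmless).

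Fix $\alpha\in\R$. To evaluate the limit and to see that the set $S$ in the statement sits inside $F_d$, I would pass to the compact Abelian group $\Omega_\alpha=\overline{\{(T^k d,\,e^{ik\alpha}):k\in\Z\}}\subseteq\Omega_d\times C$, which again carries a dense cyclic subgroup, hence a minimal translation $(\omega,z)\mapsto(\sig(\omega),ze^{i\alpha})$, hence is uniquely ergodic with some Haar measure $\mu_\alpha$. Applying the previous paragraph to the continuous function $(\omega,z)\mapsto f(\omega)\overline z$ on $\Omega_\alpha$ shows that $\lim_n \frac1{2n}\sum_{k=-n}^n d(k)e^{-ik\alpha}$ exists and equals $\int_{\Omega_\alpha}f(\omega)\overline z\,d\mu_\alpha$. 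The projection $\pi_1\colon\Omega_\alpha\to\Omega_d$ is surjective and measure-preserving, so expanding $f\circ\pi_1=\sum_{\chi\in\hat\Omega_d}\hat f(\chi)\,(\chi\circ\pi_1)$ in $L^2(\Omega_\alpha)$ and noting that the coordinate $(\omega,z)\mapsto z$ is a character $\psi$ of $\Omega_\alpha$, orthogonality of characters gives $\int_{\Omega_\alpha}f(\omega)\overline z\,d\mu_\alpha=\sum_\chi\hat f(\chi)\langle\chi\circ\pi_1,\psi\rangle$, and $\chi\circ\pi_1=\psi$ forces $\chi(\phi(k))=e^{ik\alpha}$ for all $k$. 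Hence the limit is $0$ unless $\alpha\in F_d$, and for $\alpha\in F_d$ it equals $\hat f(\chi_\alpha)$, where $\chi_\alpha\in\hat\Omega_d$ is the unique character with $\chi_\alpha\circ\phi=(k\mapsto e^{ik\alpha})$. Thus $S=\{\alpha\in F_d:\hat f(\chi_\alpha)\neq0\}$, and under the surjective homomorphism $\rho\colon F_d\to\hat\Omega_d$, $\alpha\mapsto\chi_\alpha$, whose kernel is $2\pi\Z$, we have $\rho(S)=\{\chi\in\hat\Omega_d:\hat f(\chi)\neq0\}$.

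The crux is to show that $\{\chi\in\hat\Omega_d:\hat f(\chi)\neq0\}$ generates $\hat\Omega_d$. If it generated a proper subgroup, Pontryagin duality would supply $\eta\in\Omega_d$ with $\eta\neq d$ and $\chi(\eta)=1$ for every $\chi$ with $\hat f(\chi)\neq0$; then the $L^2$-expansion of $f$ yields $f(\eta\cdot x)=\sum_\chi\hat f(\chi)\chi(\eta)\chi(x)=f(x)$ almost everywhere, hence everywhere by continuity and full support of $\mu$. Since $\sig^j$ restricts on $\Omega_d$ to translation by $\phi(j)$, the $j$-th coordinate of $\omega\in\Omega_d\subseteq\ell^\infty(\Z)$ equals $f(\phi(j)\cdot\omega)$; invariance of $f$ under $\eta$ then gives, for every $j$ and every $\omega$, that $\eta\cdot\omega$ and $\omega$ have the same $j$-th coordinate, so $\eta\cdot\omega=\omega$ in $\ell^\infty(\Z)$, and taking $\omega=d$ gives $\eta=d$, a contradiction. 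Therefore $\rho(\langle S\rangle_{\Z})=\hat\Omega_d$, i.e.\ $\langle S\rangle_{\Z}+2\pi\Z=F_d$. To finish, note $e^{-ik(\alpha+2\pi)}=e^{-ik\alpha}$, so $S$ is invariant under adding $2\pi$; excluding the trivial case $d\equiv 0$ (where $\Omega_d$ is a point), $S$ is nonempty, and for any $\alpha_0\in S$ we get $2\pi=(\alpha_0+2\pi)-\alpha_0\in\langle S\rangle_{\Z}$, whence $\langle S\rangle_{\Z}\supseteq 2\pi\Z$ and $\langle S\rangle_{\Z}=F_d$.

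The step I expect to be the main obstacle is the third one, identifying the group generated by the Fourier support of $f$ with all of $\hat\Omega_d$: this is where one must exploit the concrete realization of the hull inside $\ell^\infty(\Z)$ with $\sig$ acting as the shift (so that every coordinate function is a translate of $f$), rather than rely on soft harmonic analysis. The ergodic-averaging and Pontryagin-duality bookkeeping around it is routine, though the auxiliary group $\Omega_\alpha$ is a convenient device for treating $\alpha\notin F_d$ without having to invoke an explicit approximation of $d$ by periodic potentials.
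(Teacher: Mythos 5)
The paper does not actually prove this proposition: it is imported verbatim from Avron--Simon \cite[Theorem A.1.1]{as}, with only the remark that the continuum arguments of that appendix carry over to the discrete setting. So there is no internal proof to measure you against; what you have written is a self-contained substitute, and it is correct. Your three steps all check out: (i) unique ergodicity of a minimal group translation (applied to $T$ and $T^{-1}$, and again on the auxiliary group $\Omega_\alpha$) shows the limit exists for every $\alpha$ and identifies it, via orthogonality of characters, with $\hat f(\chi_\alpha)$ when $\alpha\in F_d$ and with $0$ otherwise; (ii) the identification $\rho(S)=\{\chi:\hat f(\chi)\neq 0\}$ is right, and your key separation argument --- that an $\eta$ annihilating the Fourier support of $f$ would satisfy $f(\eta\cdot x)=f(x)$ everywhere, hence fix every coordinate of every $\omega\in\Omega_d\subseteq\ell^\infty(\Z)$ because $\omega_j=f(\phi(j)\cdot\omega)$, hence equal the identity --- is exactly the point where the concrete realization of the hull must be used, as you say; (iii) the bookkeeping with $\ker\rho=2\pi\Z$ is handled correctly, including the observation that $S+2\pi=S$ supplies $2\pi\in\langle S\rangle_\Z$ once $S\neq\emptyset$. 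Your exclusion of $d\equiv 0$ is not cosmetic: in that degenerate case $F_d=2\pi\Z$ while the generated module is trivial, so the literal statement fails there, and flagging it is the honest thing to do. The only substantive difference from the source you are reconstructing is organizational: Avron--Simon work with the Bohr/almost-periodic-function formalism and Fourier--Bohr coefficients directly, whereas you route everything through unique ergodicity on the hull and the auxiliary skew product $\Omega_\alpha$, which has the advantage of handling $\alpha\notin F_d$ without approximating $d$ by periodic potentials. Nothing further is needed.
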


\begin{remark}
Appendix 1 of \cite{as} dealt with continuous Schr\"odinger
operators. However, all the results from there can be directly
applied to discrete Schr\"odinger operators as well, since the
proofs in Appendix 1 of \cite{as} don't require any specific
property only successful in the continuum case.
\end{remark}

\begin{proposition} \label{prop:commondivisor} \emph{\cite[Theorem A.1.3]{as}}
$d \in \ell^{\infty}(\Z)$ is limit-periodic if and only if $F_d$ has the property that any $\alpha, \beta \in F_d$ have a common divisor in $F_d$.
\end{proposition}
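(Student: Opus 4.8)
The plan is to run the whole argument through the Pontryagin dual of the hull. By the construction recalled just before the statement, $F_d$ is the inverse image of $\hat\Omega_d\subseteq C$ under $\alpha\mapsto e^{i\alpha}$; in particular $2\pi\Z\subseteq F_d$, and $\alpha\mapsto e^{i\alpha}$ descends to an isomorphism $F_d/2\pi\Z\cong\hat\Omega_d$. I then intend to prove the chain
\begin{align*}
d\ \text{limit-periodic}\ &\iff\ \Omega_d\ \text{totally disconnected}\ \iff\ \hat\Omega_d\ \text{torsion}\\
&\iff\ F_d\subseteq 2\pi\Q\ \iff\ F_d\ \text{has the common-divisor property},
\end{align*}
whose middle three equivalences carry the content of the proposition (the first being already half-recorded in the text after Proposition~\ref{prop:avilahull}).

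Two of the links are soft. ``$\hat\Omega_d$ torsion $\iff F_d\subseteq2\pi\Q$'' is immediate from $F_d/2\pi\Z\cong\hat\Omega_d$, since $\alpha\in F_d$ has finite order modulo $2\pi\Z$ exactly when $\alpha\in2\pi\Q$. ``$\Omega_d$ totally disconnected $\iff\hat\Omega_d$ torsion'' is the standard Pontryagin fact for compact abelian groups: if some $\chi\in\hat\Omega_d$ had infinite order, then $\chi(\Omega_d)$ would be a compact infinite subgroup of $C$, hence all of $C$, so the connected group $C$ would be a quotient of $\Omega_d$ --- impossible when $\Omega_d$ is totally disconnected; conversely, torsion of $\hat\Omega_d$ makes every $\ker\chi$ open of finite index, the identity component of $\Omega_d$ lies in all of them, and the $\ker\chi$ separate points, so $\Omega_d$ is totally disconnected. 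And ``$d$ limit-periodic $\Rightarrow\Omega_d$ totally disconnected'' is precisely the observation recorded immediately after Proposition~\ref{prop:avilahull}.

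The elementary equivalence ``$F_d\subseteq2\pi\Q\iff$ common-divisor property'' I would verify directly in the $\Z$-module $F_d$. If $F_d\subseteq2\pi\Q$, then for $\alpha,\beta\in F_d$ the subgroup $\langle\alpha,\beta\rangle$ is a finitely generated subgroup of the rank-one group $2\pi\Q$, hence cyclic, say $\langle\alpha,\beta\rangle=\Z\gamma$; then $\gamma\in F_d$ and $\alpha,\beta\in\Z\gamma$, so $\gamma$ is a common divisor in $F_d$. Conversely, since $2\pi\in F_d$, applying the common-divisor property to $\alpha\in F_d$ and $2\pi$ gives $\gamma\in F_d$ and $m,n\in\Z$ with $2\pi=n\gamma$ and $\alpha=m\gamma$, so $\alpha=(m/n)2\pi\in2\pi\Q$; hence $F_d\subseteq2\pi\Q$. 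For ``$d$ limit-periodic $\Rightarrow$ common-divisor property'' one may now simply run the chain backwards; alternatively, and closer to the definition, if $d=\lim_j p_j$ with $p_j$ periodic of period $T_j$, uniform convergence makes the Bohr coefficients converge, so every frequency of $d$ lies in $\tfrac{2\pi}{T_j}\Z$ for all large $j$; since $F_d$ contains $2\pi$ and, by Proposition~\ref{prop:generator}, is generated by the frequencies of $d$, any $\alpha,\beta\in F_d$ lie in a common $\tfrac{2\pi}{T_j}\Z$, and $\langle\alpha,\beta\rangle$ is then cyclic with a generator in $F_d$.

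The substantive step, and where I expect the difficulty, is ``$\hat\Omega_d$ torsion $\Rightarrow d$ limit-periodic''. I would not truncate a Fourier expansion of $d$ by hand --- the partial sums need not converge uniformly --- but work on the compact group $\Omega_d$. The coordinate map $\mathrm{ev}\colon\Omega_d\to\R$, $\mathrm{ev}(\omega)=\omega(0)$, is continuous, and $d(n)=(\sigma^n d)(0)=\mathrm{ev}(\phi(n))$, where $\phi(k)=\sigma^k(d)$ is the homomorphism of Proposition~\ref{prop:avilahull}. By Stone--Weierstrass, finite linear combinations of characters are dense in $C(\Omega_d)$, so given $\eps>0$ there are $\chi_1,\dots,\chi_r\in\hat\Omega_d$ and scalars $a_i$ with $\sup_{\omega\in\Omega_d}|\mathrm{ev}(\omega)-\sum_i a_i\chi_i(\omega)|<\eps$. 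Since $\hat\Omega_d$ is torsion, each $\zeta_i:=\chi_i(\phi(1))$ is a root of unity, so $n\mapsto\chi_i(\phi(n))=\zeta_i^{\,n}$ is periodic; hence $q(n):=\sum_i a_i\zeta_i^{\,n}$ is a periodic sequence with $\|d-q\|_{\ell^\infty}=\sup_n|\mathrm{ev}(\phi(n))-\sum_i a_i\chi_i(\phi(n))|\le\eps$, and replacing $q$ by $\mathrm{Re}\,q$ if a real potential is wanted exhibits $d$ as a uniform limit of periodic potentials. (Equivalently: torsion of $\hat\Omega_d$ makes $\Omega_d$ profinite, Stone--Weierstrass then supplies a dense family of locally constant functions on $\Omega_d$, each factoring through a finite quotient group, and pulling these back along $\phi$ yields periodic approximants of $d$.) One point to watch throughout is the necessity of $2\pi\in F_d$: were $F_d$ taken to be merely the $\Z$-module generated by the frequencies of $d$, the statement would fail --- $d(n)=\cos(\sqrt2\,n)$ would be a counterexample --- so it is essential, and implicit in the construction of $F_d$ as the preimage of the full dual group, that $2\pi\in F_d$.
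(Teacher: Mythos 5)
The paper does not prove this proposition at all; it is imported verbatim from \cite[Theorem A.1.3]{as}, so there is no internal argument to compare yours against. Your proof is correct and self-contained, and it is worth recording why. The chain through $\hat\Omega_d$ is sound: the equivalence of total disconnectedness of a compact abelian group with torsion of its dual is standard (your ``no infinite closed proper subgroup of $C$'' argument in one direction and the ``open kernels separate points'' argument in the other are both fine), the identification $F_d/2\pi\Z\cong\hat\Omega_d$ reduces torsion of the dual to $F_d\subseteq 2\pi\Q$, and the module-theoretic equivalence of that inclusion with the common-divisor property is elementary once one uses that a finitely generated subgroup of $\Q$ is cyclic together with $2\pi\in F_d$. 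The substantive implication, torsion dual $\Rightarrow$ limit-periodic, is handled correctly by approximating the evaluation functional $\omega\mapsto\omega(0)$ on $\Omega_d$ uniformly by a finite linear combination of characters via Stone--Weierstrass and restricting to the orbit $k\mapsto\sigma^k(d)$; since torsion characters restrict to periodic sequences, this produces genuine periodic approximants in $\ell^\infty(\Z)$ and sidesteps the non-uniform convergence of raw Fourier partial sums. Two remarks. First, the statement as transcribed says ``$d\in\ell^\infty(\Z)$,'' but $F_d$ is only defined when $\mathrm{hull}(d)$ carries the compact group structure, i.e.\ for almost periodic $d$; your proof (correctly, and in accordance with the setting of \cite{as}) works under that standing hypothesis, and it would be worth saying so explicitly. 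Second, your observation that $2\pi\in F_d$ is indispensable --- and that the equivalence would fail for the $\Z$-module generated by the frequencies alone, e.g.\ for $d(n)=\cos(\sqrt2\,n)$ --- is a genuine point of care in transferring Appendix~1 of \cite{as} from the continuum to $\Z$, one that the Remark following Proposition~\ref{prop:generator} passes over silently.
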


Through the Peter-Weyl theorem, we have

\begin{proposition}  \label{prop:limitform} \emph{\cite[Corollary A.1.5]{as}}
If $d \in \ell^{\infty}(\Z)$ is limit-periodic, then there exists a
positive integer set $S_d = \{n_j \}$ satisfying $n_j | n_{j+1}$
such that
$$ d(k) = \sum^{\infty}_ {j = 1} p_j (k),$$
where $p_j \in \ell^{\infty}(\Z)$ are $n_j$-periodic. This
convergence is uniform.
\end{proposition}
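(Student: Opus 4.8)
\emph{Proof strategy.} This is at bottom a one-dimensional statement about $\ell^\infty(\Z)$, and the fastest route is a telescoping argument straight from the definition of limit-periodicity, with no reference to the hull at all. Since $d$ is limit-periodic there is a sequence $(q^{(m)})_{m\ge 1}$ of periodic potentials with $\|q^{(m)}-d\|_\infty\to 0$; let $a_m\in\N$ be a period of $q^{(m)}$ and put $n_m:=\mathrm{lcm}(a_1,\dots,a_m)$, so that $n_m\mid n_{m+1}$. Define $p_1:=q^{(1)}$ and $p_m:=q^{(m)}-q^{(m-1)}$ for $m\ge 2$. Each $p_m$ is $n_m$-periodic: $q^{(m)}$ is $a_m$-periodic with $a_m\mid n_m$, while $q^{(m-1)}$ is $a_{m-1}$-periodic with $a_{m-1}\mid n_{m-1}\mid n_m$, so both summands are $n_m$-periodic. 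Since $\sum_{m=1}^{N}p_m=q^{(N)}$ telescopes and $q^{(N)}\to d$ uniformly, we obtain $d=\sum_{m\ge 1}p_m$ with uniform convergence, and $S_d=\{n_m\}$ does the job. The only step that needs a moment's thought is the passage from the ad hoc periods $a_m$ to the divisibility chain $n_m$; that single device is what simultaneously secures $n_m\mid n_{m+1}$ and the periodicity of the telescoped blocks.

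For the profinite picture --- presumably the path behind the cited \cite{as}, which is why Peter--Weyl appears --- one realizes the same decomposition intrinsically on $\Omega_d=\mathrm{hull}(d)$. Write $d(k)=f(\phi(k))$, where $\phi(k)=\sig^k(d)$ and $f\in C(\Omega_d)$ is evaluation at the $0$th coordinate. By Proposition~\ref{prop:avilahull}, $\Omega_d$ is a compact, totally disconnected Abelian group, metrizable since it is a subspace of $\ell^\infty(\Z)$, and it contains arbitrarily small compact open finite-index subgroups; intersecting the given ones along a countable neighborhood basis at $d$ produces a decreasing chain $H_1\supseteq H_2\supseteq\cdots$ of such subgroups with $\bigcap_m H_m=\{d\}$. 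Density of $\phi(\Z)$ makes each $\Omega_d/H_m$ a finite cyclic group, say of order $n_m$, and the compatible quotient maps $\Omega_d/H_{m+1}\twoheadrightarrow\Omega_d/H_m$ force $n_m\mid n_{m+1}$, so $\Omega_d=\varprojlim_m\Z/n_m\Z$ is procyclic (Peter--Weyl is what guarantees, more generally, that these finite quotients separate points, on which such a treatment rests). Let $E_m f(\om)=\int_{H_m}f(\om\cdot h)\,d\mu_{H_m}(h)$ be the average over the coset $\om H_m$ against normalized Haar measure on $H_m$. Then $E_m f$ is continuous and $H_m$-invariant, hence factors through $\Omega_d/H_m\cong\Z/n_m\Z$, so $E_m f\circ\phi$ is $n_m$-periodic; setting $p_1=E_1 f\circ\phi$ and $p_m=(E_m f-E_{m-1}f)\circ\phi$ for $m\ge 2$ gives $n_m$-periodic potentials with $\sum_{m\le N}p_m=E_N f\circ\phi$ (using $H_m\subseteq H_{m-1}$, so $E_{m-1}f$ is also $H_m$-invariant).

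The group-theoretic proof is then finished by the one genuinely nontrivial estimate, $\|E_m f-f\|_\infty\to 0$: given $\eps>0$, uniform continuity of $f$ on the compact group $\Omega_d$ yields a neighborhood $U$ of the identity with $|f(\om h)-f(\om)|<\eps$ whenever $h\in U$, and since the closed sets $H_m$ decrease to $\{d\}$ in the compact space $\Omega_d$ we have $H_m\subseteq U$ for all large $m$, whence $|E_m f(\om)-f(\om)|\le\eps$ uniformly in $\om$. I expect this step --- uniform convergence of the conditional expectations, together with the observation that the $H_m$ form a neighborhood basis at the identity --- to be the main obstacle in the intrinsic proof; the elementary telescoping argument of the first paragraph has essentially no obstacle and avoids it altogether.
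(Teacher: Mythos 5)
Your argument is correct, but note that the paper does not actually prove this proposition: it is quoted from Avron--Simon \cite[Corollary A.1.5]{as} with the one-line indication that it follows ``through the Peter--Weyl theorem,'' i.e.\ via harmonic analysis on the hull and the frequency module. Your first paragraph is a genuinely different and more elementary route: telescoping an approximating sequence of periodic potentials and replacing the ad hoc periods $a_m$ by $n_m=\mathrm{lcm}(a_1,\dots,a_m)$ immediately yields both the divisibility chain and the $n_m$-periodicity of each block, with no mention of the hull, its group structure, or its characters. Your second paragraph essentially reconstructs the intended Avron--Simon/Peter--Weyl argument in the language of conditional expectations onto the finite cyclic quotients $\Omega_d/H_m$ (averaging over $H_m$ is the projection onto the span of the characters trivial on $H_m$), and all of its steps check out: density of the orbit makes each quotient cyclic, the surjections give $n_m\mid n_{m+1}$, $H_m$-invariance gives $n_m$-periodicity of $E_mf\circ\phi$, and the finite-intersection-property argument gives $\|E_mf-f\|_\infty\to 0$. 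What the heavier route buys --- and the reason the paper phrases the result this way --- is that it ties the integers $n_j$ to the structure of $\Omega_d$ and to the frequency module $F_d$, which the paper needs for the subsequent discussion of frequency integer sets and the classification theorems; your telescoping argument produces \emph{some} admissible chain $\{n_m\}$ but says nothing about its relation to $F_d$. For the proposition as literally stated, either argument suffices.
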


\begin{remark} \label{remk:limitperiod}
Given any positive integer set $S_d = \{n_j \}$ satisfying $n_j |
n_{j+1}$, let $p_j (k) = t/n^3_j,\ t = k\ (\mathrm{mod}\ n_j)$. 
Then the potential $d \in
\ell^{\infty}(\Z)$, defined by $ d(k) = \sum^{\infty}_ {j = 1} p_j
(k),$  is limit-periodic.
\end{remark}

Given a limit-periodic potential $d \in \ell^{\infty}(\Z)$ which is
not periodic, $F_d$ is countable and infinitely generated (if it is
finitely generated, $d$ is quasi-periodic or periodic; see Appendix
$1$ of \cite{as}). Denote the set of generators of $F_d$ by $G_d =
\{ 2\pi \alpha_j \}$. By Proposition \ref{prop:commondivisor}, $2
\pi \alpha_1$ and $2 \pi \alpha_2$ have a common divisor. By
Proposition \ref{prop:generator},  $$a = \lim_{n \to \infty}
\frac{1}{2n} \sum^{n}_{k=-n} d(k) e^{-i2k \pi \alpha_1}$$ and $$b =
\lim_{n \to \infty} \frac{1}{2n} \sum^{n}_{k=-n} d(k) e^{-i2k \pi
\alpha_2}$$ are both non-zero. Choose  a periodic potential $p \in
\ell^{\infty}(\Z)$ with
$$\p p - d \p_{\infty} \leq \frac{1}{2} \min(|a|,|b|).$$ It follows
that
$$ \lim_{n \to \infty} \frac{1}{2n} \sum^{n}_{k=-n} p(k) e^{-i2k \pi  \gamma} \neq 0 $$
for $\gamma = \alpha_1,\alpha_2$. So $\frac{2\pi}{h}$ divides both
$2\pi \alpha_1$ and $2\pi \alpha_2$ where $h$ is the period \footnote{In this paper, the period of a periodic 
potential is considered as the minimal period.} of $p$
(note that the frequency module of an $h$-periodic potential is
finitely generated, and there is a common divisor $\frac{2 \pi}{h}$
of the generators; see Appendix $1$ of \cite{as}). So there exists
$n_1 \in \Z^{+}$ such that the greatest common divisor of $2\pi
\alpha_1$ and $2\pi \alpha_2$ is $\frac{2 \pi}{n_1}$, and the
$\Z$-module generated by $\{ 2\pi \alpha_1, 2\pi \alpha_2 \}$ is the
$\Z$-module generated by $\{\frac{2 \pi}{n_1} \}$. Similarly, for
$\alpha_1,\alpha_2$ and $\alpha_3$ there exists a positive integer
$n_2$ such that the $\Z$-module generated by $\{ 2\pi \alpha_1, 2\pi
\alpha_2, 2\pi \alpha_3 \}$ is the $\Z$-module generated by
$\{\frac{2 \pi}{n_2} \}$. Clearly, $n_1 | n_2$. By induction, we can
find an infinite positive integer set $$S_d = \{n_j\} \subset \Z^+$$
such that $n_j | n_{j+1}$ and $F_d$ is generated by $G_d = \left
\{\frac{2\pi}{n_j} : n_j \in S_d \right \}.$

We call $S_d$ a \textit{frequency integer set} of $d$. In general,
we say that $S_d$ is a frequency integer set of $d$ if one can
obtain the frequency module $F_d$ from $S_d$. When $d$ is
limit-periodic, for any $n,m \in S_d$, one always has $n | m$ or $m
| n$. If $d(k) = \sum^{\infty}_ {j = 1} p_j (k),$ where $p_j$ are
$n_j$-periodic, then it is easy to conclude that $S_d = \{ n_j \}$
is a frequency integer set of $d$. Clearly, $d$ may have other
frequency integer sets. By the following theorem, we shall see that
there exists a unique maximal frequency integer set $M_d$ in the
sense that every frequency integer set $S_d$ is contained in $M_d$.

\begin{theorem} \label{thm:subfrequency}
Given a limit-periodic potential $d \in \ell^{\infty}(\Z)$ with an
infinite frequency integer set $S_d$, for any limit-periodic
potential $\tilde{d} \in \ell^{\infty}(\Z)$ with a frequency integer
set $S_{\tilde{d}}$, if  $S_{\tilde{d}}$ is an infinite subset of
$S_d$, then we have $\Omega_d \cong \Omega_{\tilde{d}}.$
\end{theorem}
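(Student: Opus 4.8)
The plan is to translate the hypothesis into an \emph{equality} of frequency modules and then invoke Pontryagin duality. First I would recall, from the discussion preceding the statement, that for a non-periodic limit-periodic potential $d$ every frequency integer set $S_d$ is totally ordered by divisibility and that $F_d$ is the $\Z$-module generated by $\{2\pi/n : n\in S_d\}$; since it is generated by a divisibility chain, this module is just the increasing union
$$ F_d \;=\; \bigcup_{n\in S_d}\tfrac{2\pi}{n}\,\Z , $$
and likewise $F_{\tilde d}=\bigcup_{m\in S_{\tilde d}}\tfrac{2\pi}{m}\,\Z$. (A periodic potential has no infinite frequency integer set, so the hypotheses force $d$ and $\tilde d$ to be non-periodic.)

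The one genuinely combinatorial step is to observe that an infinite subset of a divisibility chain in $\Z^{+}$ is cofinal in it: each $x\in S_d$ has only finitely many divisors, hence only finitely many elements of $S_d$ lie below it, so in increasing order $S_d$ has order type $\omega$, and therefore any infinite $S_{\tilde d}\subseteq S_d$ contains elements larger than---and hence divisible by---any prescribed $n\in S_d$. Thus $\tfrac{2\pi}{n}\Z\subseteq\tfrac{2\pi}{m}\Z$ for some $m\in S_{\tilde d}$, and together with the obvious reverse inclusion (coming from $S_{\tilde d}\subseteq S_d$) this yields the equality of subgroups of $\R$
$$ F_d \;=\; \bigcup_{n\in S_d}\tfrac{2\pi}{n}\,\Z \;=\; \bigcup_{m\in S_{\tilde d}}\tfrac{2\pi}{m}\,\Z \;=\; F_{\tilde d} . $$

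Next I would use the identification, already set up in the text, of the dual group $\hat{\Omega}_d$ with $F_d/2\pi\Z$ as a subgroup of the circle group $C$ (via the exponential map $\R\to C$, whose kernel is $2\pi\Z$). The equality $F_d=F_{\tilde d}$ then gives $\hat{\Omega}_d=\hat{\Omega}_{\tilde d}$ as discrete countable abelian groups, and Pontryagin duality for locally compact abelian groups yields $\Omega_d\cong\widehat{\hat{\Omega}_d}=\widehat{\hat{\Omega}_{\tilde d}}\cong\Omega_{\tilde d}$ as topological groups; in fact this isomorphism carries the distinguished generator $\sigma(d)$ to $\sigma(\tilde d)$, so it intertwines the two minimal translations. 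Equivalently, one may realize each hull directly as the inverse limit $\varprojlim_j\Z/n_j\Z$ along its own chain and appeal to the invariance of an inverse limit under passage to a cofinal subsystem; this also makes transparent that $\Omega_d$ depends only on the supernatural number $\mathrm{lcm}_{n\in S_d}\,n$, which is unchanged when $S_d$ is replaced by $S_{\tilde d}$.

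I expect the cofinality observation, together with the bookkeeping that upgrades the mere isomorphism of dual groups to a literal equality of frequency modules, to be the only substantive part of the argument; once $F_d=F_{\tilde d}$ is in hand the duality step is routine. The only thing to watch is the periodic or otherwise degenerate case, which the assumption that $S_{\tilde d}$ is infinite rules out.
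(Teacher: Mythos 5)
Your proposal is correct and follows essentially the same route as the paper: both arguments reduce the claim to the equality $F_d = F_{\tilde d}$ via the observation that an infinite subset of the divisibility chain $S_d$ is cofinal in it (so every generator $2\pi/n_t$ of $F_d$ is an integer multiple of some $2\pi/n_{j_k}$ with $n_{j_k}\in S_{\tilde d}$), and then conclude $\Omega_d\cong\Omega_{\tilde d}$ from $\hat\Omega_d\cong\hat\Omega_{\tilde d}$ by Pontryagin duality. Your explicit cofinality remark and the aside on inverse limits of cofinal subsystems are just slightly more detailed packagings of the same idea.
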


\begin{proof}
Write $S_{\tilde{d}} = \{n_{j_k} \} \subset S_d = \{n_j \} $. Define
a homomorphism by $\vphi: F_{\tilde{d}} \to F_d, \ \vphi(\frac{2\pi}
{n_{j_k}}) = \frac{2\pi} {n_{j_k}}.$ Clearly, $\vphi$ is injective
since $G_{\tilde{d}} \subset G_d$. For any $\frac{2\pi} {n_{t}}
\notin G_{\tilde{d}}$, choose $n_{j_k} > n_t$, and then there is
$t_k$ such that $n_{j_k} = t_k n_t$ (by the property of $S_d$). We
have $\vphi (t_k \frac{2\pi} {n_{j_k}}) = t_k \frac{2 \pi}{n_{j_k}}
= \frac{2\pi} {n_{t}}$,  implying that $\vphi$ is also surjective.
Thus, $F_d = F_{\tilde{d}}.$  Clearly, $\hat{\Omega}_d \cong
\hat{\Omega}_{\tilde{d}}$, since $\hat{\Omega}_d = \{e^{i \alpha } :
\alpha \in F_d \}$ and $\hat{\Omega}_{\tilde{d}} = \{e^{i \beta } :
\beta \in F_{\tilde{d}} \}$. By the Pontryagin duality theorem,
$\Omega_d \cong \Omega_{\tilde{d}}.$
\end{proof}

Theorem \ref{thm:subfrequency} gives us a way to find the maximal
frequency integer set of $d$. If $S_d$ is a frequency integer set of
$d$, one can add positive integers into $S_d$  to get $M_d = \{m_j
\} \subset \Z^+$ such that  $m_{j+1}/m_j$ is a prime number.
Clearly, $S_d \subset M_d$ and $M_d$ is still a frequency integer
set of $d$. The uniqueness of  $M_d$ follows from Theorem
\ref{thm:subfrequency}.

\begin{theorem} \label{thm:classification}
Given limit-periodic potentials $d,\tilde{d} \in \ell^{\infty}(\Z)$
with frequency integer sets $S_d$ and $S_{\tilde{d}}$ respectively,
$\Omega_d \cong \Omega_{\tilde{d}}$ if and only if for any $n_i \in
S_d$ there exists $m_j \in S_{\tilde{d}}$ such that $n_i | m_j$, and
vice versa.
\end{theorem}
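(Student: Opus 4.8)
The plan is to reduce the isomorphism $\Omega_d \cong \Omega_{\tilde d}$, via Pontryagin duality, to an equality of frequency modules $F_d = F_{\tilde d}$, and then to translate that equality into the stated divisibility condition between $S_d$ and $S_{\tilde d}$. For the ``only if'' direction, suppose $\Omega_d \cong \Omega_{\tilde d}$ as topological groups. Since the dual of $\Omega_d$ is $\hat\Omega_d = \{e^{i\alpha} : \alpha \in F_d\}$ and likewise for $\tilde d$, and since both are realized concretely as subgroups of the circle $C$, the isomorphism of the groups forces $\hat\Omega_d \cong \hat\Omega_{\tilde d}$, and hence — because passing to the inverse image under $\alpha \mapsto e^{i\alpha}$ is canonical — $F_d = F_{\tilde d}$ as subgroups of $\R$. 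Now for any $n_i \in S_d$ the element $\frac{2\pi}{n_i}$ lies in $F_d = F_{\tilde d}$, which by construction is the $\Z$-module generated by $\{\frac{2\pi}{m_j} : m_j \in S_{\tilde d}\}$. Writing $\frac{2\pi}{n_i}$ as a finite integer combination of the $\frac{2\pi}{m_j}$ and using that the $m_j$ are linearly ordered by divisibility, the combination collapses to a single term $c \cdot \frac{2\pi}{m_j}$ for the largest $m_j$ appearing; since $\frac{2\pi}{n_i} = c\cdot\frac{2\pi}{m_j}$ with $c \in \Z$, we get $m_j = c\, n_i$, i.e. $n_i \mid m_j$. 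Symmetry gives the ``vice versa'' half.

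For the ``if'' direction, assume the two-sided divisibility condition. I claim it implies $F_d = F_{\tilde d}$, after which the Pontryagin duality theorem gives $\Omega_d \cong \Omega_{\tilde d}$ exactly as in the proof of Theorem 1.7. To see $F_{\tilde d} \subseteq F_d$: a generator $\frac{2\pi}{m_j}$ of $F_{\tilde d}$ has, by hypothesis applied to $m_j \in S_{\tilde d}$, some $n_i \in S_d$ with $m_j \mid n_i$, say $n_i = q\, m_j$; then $\frac{2\pi}{m_j} = q \cdot \frac{2\pi}{n_i} \in F_d$. The reverse inclusion is identical with the roles of $d$ and $\tilde d$ swapped. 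Hence $F_d = F_{\tilde d}$, hence $\hat\Omega_d = \hat\Omega_{\tilde d}$, hence $\Omega_d \cong \Omega_{\tilde d}$.

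The main obstacle — really the only point that needs care rather than bookkeeping — is the collapsing argument in the ``only if'' direction: one must be sure that an arbitrary $\Z$-linear combination $\sum_k c_k \frac{2\pi}{m_{j_k}}$ of generators, where the indices $m_{j_k}$ are totally ordered by divisibility, can be rewritten as a single integer multiple of $\frac{2\pi}{m_{j^*}}$ for the maximal index $m_{j^*}$ among those with $c_k \neq 0$. This is because for each $k$ one has $m_{j_k} \mid m_{j^*}$, so $\frac{2\pi}{m_{j_k}} = \frac{m_{j^*}/m_{j_k}}{1}\cdot\frac{2\pi}{m_{j^*}}$ is already an integer multiple of $\frac{2\pi}{m_{j^*}}$, and the sum is therefore an integer multiple of $\frac{2\pi}{m_{j^*}}$ as well; the only subtlety is that the resulting integer could a priori be such that the quotient $\frac{2\pi}{n_i} = c \cdot \frac{2\pi}{m_{j^*}}$ has $|c|\ge 1$ (it cannot be $0$ since $\frac{2\pi}{n_i}\neq 0$), which is precisely the divisibility $n_i \mid m_{j^*}$ we want. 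One should also remark at the outset that both frequency integer sets may be taken infinite without loss of generality (the periodic case, where a set is finite, is handled by the same computation and is essentially trivial), so that ``for any $n_i \in S_d$ there exists $m_j \in S_{\tilde d}$ with $n_i \mid m_j$'' always has content.
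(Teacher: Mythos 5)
Your ``if'' direction is correct and in fact more direct than the paper's: you show that the two frequency modules coincide as subgroups of $\R$ (each generator $\frac{2\pi}{m_j}$ of $F_{\tilde d}$ is an integer multiple of some $\frac{2\pi}{n_i}\in F_d$ and conversely) and then invoke Pontryagin duality, whereas the paper interleaves the two frequency integer sets into $H=L_d\cup L_{\tilde d}$, realizes $H$ as the frequency integer set of an auxiliary potential $d'$ via Remark~\ref{remk:limitperiod}, and applies Theorem~\ref{thm:subfrequency} twice. Both routes work; yours avoids the auxiliary construction.

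The ``only if'' direction, however, has a genuine gap, and it sits exactly where you claim no care is needed. From an abstract isomorphism $\Omega_d\cong\Omega_{\tilde d}$ you obtain an abstract isomorphism $\hat\Omega_d\cong\hat\Omega_{\tilde d}$, but you then conclude $F_d=F_{\tilde d}$ \emph{as subgroups of} $\R$ ``because passing to the inverse image under $\alpha\mapsto e^{i\alpha}$ is canonical.'' Canonicity of the preimage only says that $F_d$ is determined by the subset $\hat\Omega_d\subset C$; it does not explain why two \emph{isomorphic} subgroups of $C$ must be \emph{equal}. In general isomorphic subgroups of a fixed ambient group need not coincide, so this step requires an argument, and that argument is precisely the content of this direction of the theorem. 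It does hold here because $\hat\Omega_d$ and $\hat\Omega_{\tilde d}$ are torsion subgroups of the circle and the circle contains a unique cyclic subgroup of each finite order; equivalently, one argues as the paper does: if some $n_1\in S_d$ divided no $m_j\in S_{\tilde d}$, the element $e^{2\pi i/n_1}\in\hat\Omega_d$ would map under the isomorphism $\phi$ into some cyclic subgroup generated by $e^{2\pi i/m_{t_1}}$, whence $\phi\bigl(e^{m_{t_1}2\pi i/n_1}\bigr)=1$ although $e^{m_{t_1}2\pi i/n_1}\neq 1$ (as $n_1\nmid m_{t_1}$), contradicting injectivity. Your ``collapsing'' observation is fine --- and is implicitly used in the paper when it writes $\phi(e^{2\pi i/n_1})=e^{q_1 2\pi i/m_{t_1}}$ --- but it only becomes available once you already know that $\frac{2\pi}{n_i}$ lies in $F_{\tilde d}$, which is the unproved claim. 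Supply the order-of-torsion-elements argument and your proof closes.
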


\begin{proof}
If $S_d, S_{\tilde{d}}$ are both finite sets, $d, \tilde{d}$ are
periodic, and then the statement follows readily. We assume that
$S_d, S_{\tilde{d}}$ are infinite sets. First assume that for any
$n_i \in S_d$ there exists $m_j \in S_{\tilde{d}}$ such that $n_i |
m_j$ and vice versa. Then, for $n_{i_1} \in S_d,$ there exists
$m_{j_1} \in S_{\tilde{d}}$ such that $n_{i_1} | m_{j_1}$. Consider
$m_{j_1} \in S_{\tilde{d}}$, and similarly, there exists $n_{i_2}
\in S_d$ with $m_{j_1} | n_{i_2}$. By induction, we get infinite
subsets $L_d = \{n_{i_k} \} \subset S_d$ and $L_{\tilde{d}} =
\{m_{j_k} \} \subset S_{\tilde{d}}$ such that $H = L_d \cup
L_{\tilde{d}}$ can be a frequency integer set of some limit-periodic
potential $d^{'}$(see Remark~\ref{remk:limitperiod}). By Theorem
\ref{thm:subfrequency}, we conclude that $\Omega_d \cong
\Omega_{d^{'}} \cong \Omega_{\tilde{d}}$.

Conversely, assume that such a condition is not successful. Without
loss of generality, suppose that for a fixed $n_1 \in S_d$, $n_1
\nmid m_j$ for any $m_j \in S_{\tilde{d}}$. If $\Omega_d \cong
\Omega_{\tilde{d}}$, by the Pontryagin duality theorem, we have
$\hat{\Omega}_d \cong \hat{\Omega}_{\tilde{d}}$. So there is an
isomorphism $\phi : \hat{\Omega}_d \to \hat{\Omega}_{\tilde{d}}$
with $\phi(e^{2 \pi i}) = e^{2 \pi i}$ (the identity must be mapped
to the identity). Write
$$\phi(e^{\frac{2\pi}{n_1} i}) = e^{\sum^{t_1}_{j = 1} k_j \frac{2 \pi i}{m_j}} = e^{ q_1 \frac{2 \pi i}{m_{t_1}}},$$
and we get
$$\phi(e^{m_{t_1}\frac{2\pi}{n_1} i}) =  e^{ q_1 m_{t_1} \frac{2 \pi i}{m_{t_1}}} = e^{2 q_1 \pi i} = 1,$$ which is
impossible since $m_{t_1}/n_1$ is not an integer. Thus, $\Omega_d
\ncong \Omega_{\tilde{d}}.$ The proof is complete.
\end{proof}

By Theorem~\ref{thm:classification}, we conclude the following classification theorem.

\begin{theorem} \label{cor:hull}
Given two limit-periodic potentials $d,\tilde{d} \in
\ell^{\infty}(\Z)$, $\Omega_d \cong \Omega_{\tilde{d}}$ if and only
if $d$ and $\tilde{d}$ have the same maximal frequency integer set.
\end{theorem}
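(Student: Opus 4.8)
The plan is to derive Theorem~\ref{cor:hull} from Theorem~\ref{thm:classification}, absorbing the two‑sided divisibility condition of the latter into the single set $M_d$. The one ingredient I will use beyond Theorem~\ref{thm:classification} is the description of the maximal frequency integer set coming from the preceding discussion, namely $M_d=\{\,n\in\Z^{+}:\frac{2\pi}{n}\in F_d\,\}$. Two consequences are then immediate. First, $M_d$ is closed under divisors: if $n\in M_d$ and $m\mid n$, write $n=mk$; then $\frac{2\pi}{m}=k\cdot\frac{2\pi}{n}\in F_d$ because $F_d$ is a $\Z$‑module, so $m\in M_d$. Second, every frequency integer set $S_d=\{n_j\}$ of $d$ is cofinal in $M_d$ under divisibility: if $n\in M_d$, then $\frac{2\pi}{n}$ is a finite $\Z$‑combination of the $\frac{2\pi}{n_j}$, and since $n_j\mid n_{j'}$ for $j\le j'$ this combination collapses to a single term $c\cdot\frac{2\pi}{n_J}$ with $J$ the largest index present, whence $n\mid n_J$ (indeed $n\mid n_j$ for all $j\ge J$). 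Also $S_d\subseteq M_d$, and $M_d$ generates $F_d$.

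For the direction ``$\Leftarrow$'', suppose $M_d=M_{\tilde{d}}$. Since $M_d$ generates $F_d$ and $M_{\tilde{d}}$ generates $F_{\tilde{d}}$, we obtain $F_d=F_{\tilde{d}}$, hence $\hat{\Omega}_d=\hat{\Omega}_{\tilde{d}}$ as subgroups of $C$, and therefore $\Omega_d\cong\Omega_{\tilde{d}}$ by Pontryagin duality. (Equivalently, apply Theorem~\ref{thm:classification} to the frequency integer sets $M_d$ and $M_{\tilde{d}}$, for which the divisibility requirement is trivially satisfied.)

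For the direction ``$\Rightarrow$'', suppose $\Omega_d\cong\Omega_{\tilde{d}}$ and fix any frequency integer sets $S_d$ and $S_{\tilde{d}}$. By Theorem~\ref{thm:classification}, every element of $S_d$ divides an element of $S_{\tilde{d}}$ and conversely. Take $n\in M_d$. By cofinality, $n\mid n_J$ for some $n_J\in S_d$; by the divisibility condition, $n_J\mid m_k$ for some $m_k\in S_{\tilde{d}}$; hence $n\mid m_k$, and since $m_k\in S_{\tilde{d}}\subseteq M_{\tilde{d}}$ and $M_{\tilde{d}}$ is closed under divisors, $n\in M_{\tilde{d}}$. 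Thus $M_d\subseteq M_{\tilde{d}}$, and the symmetric argument yields $M_d\supseteq M_{\tilde{d}}$, so $M_d=M_{\tilde{d}}$. The same chase covers the periodic case without change, with $M_d$ the set of divisors of the period.

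Everything after the first paragraph is just transitivity and symmetry of divisibility; the step that needs genuine care — and which I expect to be the main, if modest, obstacle — is justifying the description $M_d=\{\,n:\frac{2\pi}{n}\in F_d\,\}$ and the two properties drawn from it, i.e.\ checking that this set really is the maximal frequency integer set as constructed earlier, that it is divisor‑closed, and that every frequency integer set is divisor‑cofinal in it. Once these are in place the proof is purely formal.
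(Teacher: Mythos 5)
Your argument is correct, and it takes the same route as the paper, which proves Theorem~\ref{cor:hull} by simply invoking Theorem~\ref{thm:classification}; what you have done is supply the bookkeeping that the paper leaves implicit. The one point where you genuinely diverge from the text is the definition of $M_d$. The paper builds $M_d$ by enlarging a chain $S_d$ to a chain $\{m_j\}$ with $m_{j+1}/m_j$ prime, whereas you take $M_d=\{\,n\in\Z^+:\tfrac{2\pi}{n}\in F_d\,\}$, the divisor-closure; these are different objects (yours is not totally ordered by divisibility). Your reading is the defensible one: the paper's prime-ratio chain is not unique as a set --- e.g.\ $\{2,6,30,\dots\}$ and $\{3,6,30,\dots\}$ generate the same frequency module yet extend to the distinct chains $\{1,2,6,30,\dots\}$ and $\{1,3,6,30,\dots\}$ --- so the theorem is only literally true for a divisibility-saturated $M_d$ such as yours, which is indeed the unique frequency integer set containing every other one. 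You correctly identify this identification as the step needing care, and your two supporting facts check out: divisor-closedness is immediate from $F_d$ being a $\Z$-module, and the cofinality argument (collapsing a $\Z$-combination to a single term $c\cdot\tfrac{2\pi}{n_J}$, which uses that $S_d$ is a divisibility chain) is sound. Both directions then follow from Theorem~\ref{thm:classification} together with Pontryagin duality exactly as you write, including the periodic case.
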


If $d \in \ell^{\infty}(\Z)$ is periodic, $\mathrm{hull}(d)$ is a
finite cyclic group. If $d$ is limit-periodic but not periodic, the
previous theorems don't give us any information about the structure
of the hull of a limit-periodic potential. We shall see that
$\mathrm{hull}(d)$ is a procyclic group later. We will also explain how to generate
limit-periodic potentials from a procyclic group.


\vspace*{0.5cm}
\setcounter{equation}{0}

\section{Profinite Groups}

\begin{definition}
A group $\Omega$ is called Cantor if it is a totally
disconnected compact Abelian topological group without isolated
points.
\end{definition}

\begin{remark}
Not every Cantor group has minimal translations. For example,
$$\Omega = \prod^{\infty}_{j=0} \Z_2,$$ where $\Z_2$ is a cyclic
$2$-group, is a Cantor group with the product topology, but it has
no minimal translations, since $\omega + \omega$ is the identity for
any $\omega \in \Omega$.
\end{remark}

By the definition of Cantor groups and Proposition
\ref{prop:avilahull}, we conclude that given a limit-periodic
potential $d \in \ell^{\infty}(\Z)$ with an infinite frequency
integer set (it ensures that $d$ is not periodic),
$\mathrm{hull}(d)$ is a Cantor group which has a minimal
translation. A topological profinite group is by definition an
\textit{inverse limit} of finite topological groups. Now let's
introduce the related definitions.

A \textit{directed} set is a partially ordered set $I$ such that for
all $i_1,i_2 \in I$ there is an element $j \in I$ for which $i_1
\leq j$ and $i_2 \leq j$.

\begin{definition}
An inverse system $(X_i,\phi_{ij})$ of topological groups indexed by
a directed set $I$ consists of a family $(X_i \mid i \in I)$ of
topological groups and a family $(\phi_{ij}: X_j \to X_i \mid i,j
\in I, i \leq j)$ of continuous homomorphisms such that $\phi_{ii}$
is the identity map $\mathrm{id}_{X_i}$ for each $i$ and  $\phi_{ij}
\phi_{jk} = \phi_{ik}$ whenever $i \leq j \leq k.$
\end{definition}

For example, let $I = \Z^{+}$ with the usual order, let $p$ be a
prime, and let $X_i = \Z/p^i \Z$ for each $i$, and for $j \ge i$ let
$\phi_{ij} : X_j \to X_i$ be the map defined by
$$\phi_{ij}(n + p^j \Z) = n + p^i \Z$$
for each $n \in \Z$. Then $(X_i, \phi_{ij})$ is an inverse system of finite topological groups.

\indent Let $(X_i, \phi_{ij})$ be an inverse system of topological
groups and let $Y$ be a topological group. We call a family $(\phi_i
: Y \to X_i \mid i \in I)$ of continuous homomorphisms
\textit{compatible} if $\phi_{ij} \phi_j = \phi_i$ whenever $i \leq
j$; that is, each of the following diagrams
\begin{diagram}
&              &Y &                &\\
&\ldTo^{\phi_j}&        &\rdTo^{\phi_i}   &\\
X_{j}      &        &\rTo^{\phi_{ij}} &      & X_{i}
\end{diagram}
is commutative.
\begin{definition}
An inverse limit $(X, \phi_i)$ of an inverse system $(X_i,
\phi_{ij})$ of topological groups is a topological group together
with a compatible family $(\phi_i : X \to X_i)$ of continuous
homomorphisms with the following universal property: whenever
$(\vphi_i : Y \to X_i)$ is a compatible family of continuous
homomorphisms from a topological group $Y$, there is a unique
continuous homomorphism $\vphi : Y \to X$ such that $\phi_i \vphi =
\vphi_i$ for each $i$.
\end{definition}

That is, there is a unique continuous homomorphism $\vphi$ such that
each of the following diagram

\begin{diagram}
&              &Y &                &\\
&\ldTo^{\vphi}&        &\rdTo^{\vphi_i}   &\\
X     &        &\rTo^{\phi_{i}} &      & X_{i}
\end{diagram}
is commutative.

\begin{proposition} \label{prop:inverselimit}\emph{\cite[Proposition
1.1.4]{w}}
Let $(X_i, \phi_{ij})$ be an inverse system of topological groups, indexed by $I$.\\[1mm]
{\rm (1)}. There exists an inverse limit $(X, \phi_i)$ of $(X_i,
\phi_{ij})$, for which $X$ is a topological group and the maps
$\phi_i$ are continuous
homomorphisms.\\[0.5mm]
{\rm (2)}. If $(X^{(1)}, \phi^{(1)}_i)$ and $(X^{(2)},
\phi^{(2)}_i)$ are inverse limits of the inverse system, then there
is an isomorphism
$\bar{\phi} : X^{(1)} \to X^{(2)} $ such that $\phi^{(2)}_i \bar{\phi} = \phi^{(1)}_i$ for each $i$. \\[0.5mm]
{\rm (3)}. Write $G = \prod_{i \in I} X_i$ with the product topology
and for each $i$ write $\pi_i$ for the projection map from $G$ to
$X_i$. Define
$$X = \{ c \in G : \phi_{ij} \pi_j(c) = \pi_i(c)\ for\ all\ i,j\ with\ j \ge i   \}$$
and $\phi_i = \pi_i |_{X}$ for each $i$. Then $(X,\phi_i)$ is an inverse limit of $(X_i, \phi_{ij})$.
\end{proposition}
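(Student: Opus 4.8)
The plan is to establish part (3) by an explicit construction, obtain part (1) as an immediate corollary of it, and then prove part (2) by the standard universal-property argument.

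First I would set $G = \prod_{i\in I} X_i$ with the product topology, so that $G$ is a topological group and each projection $\pi_i \colon G \to X_i$ is a continuous homomorphism. I would then check that the prescribed set
\[
X = \{\, c \in G : \phi_{ij}\,\pi_j(c) = \pi_i(c)\ \text{for all } i \le j \,\}
\]
is a subgroup of $G$: it contains the identity because each $\phi_{ij}$ is a homomorphism, and it is closed under products and inverses because $c \mapsto \phi_{ij}\pi_j(c)$ and $c \mapsto \pi_i(c)$ are homomorphisms, so the defining relations are preserved. (It is worth recording, although not strictly needed here, that $X$ is moreover closed in $G$, being the intersection over pairs $i\le j$ of the equalizer sets $\{c : \phi_{ij}\pi_j(c) = \pi_i(c)\}$, each closed since $X_i$ is Hausdorff; this will matter for the profinite-compactness statements later.) Give $X$ the subspace topology; then $X$ is a topological group, $\phi_i := \pi_i|_X$ is a continuous homomorphism for each $i$, and the family $(\phi_i)$ is compatible, since for $i \le j$ and $c \in X$ one has $\phi_{ij}\phi_j(c) = \phi_{ij}\pi_j(c) = \pi_i(c) = \phi_i(c)$ directly from the definition of $X$.

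The substantive step is the universal property. Given a topological group $Y$ with a compatible family $(\varphi_i \colon Y \to X_i)$, I would define $\varphi \colon Y \to G$ by $\varphi(y) = (\varphi_i(y))_{i \in I}$. Since $G$ carries the product topology, $\varphi$ is a continuous homomorphism as soon as each $\pi_i\varphi = \varphi_i$ is, which holds by hypothesis; and compatibility of $(\varphi_i)$ says precisely that $\phi_{ij}\pi_j(\varphi(y)) = \pi_i(\varphi(y))$ for all $y$, i.e. $\varphi(Y) \subseteq X$, so $\varphi$ corestricts to a continuous homomorphism $\varphi \colon Y \to X$ with $\phi_i\varphi = \varphi_i$. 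For uniqueness, if $\psi \colon Y \to X$ is any continuous homomorphism with $\phi_i\psi = \varphi_i$ for every $i$, then $\pi_i\psi = \pi_i\varphi$ for all $i$, and since the $\pi_i$ separate the points of $G$ (hence of $X$), $\psi = \varphi$. This proves (3), and (1) follows at once, taking $(X,\phi_i)$ to be the pair just constructed.

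For (2), suppose $(X^{(1)},\phi^{(1)}_i)$ and $(X^{(2)},\phi^{(2)}_i)$ are both inverse limits. Feeding the compatible family $(\phi^{(2)}_i)$ into the universal property of $X^{(1)}$ produces a continuous homomorphism $\alpha \colon X^{(2)} \to X^{(1)}$ with $\phi^{(1)}_i\alpha = \phi^{(2)}_i$; symmetrically there is $\bar\phi \colon X^{(1)} \to X^{(2)}$ with $\phi^{(2)}_i\bar\phi = \phi^{(1)}_i$. Then $\bar\phi\,\alpha$ and $\mathrm{id}_{X^{(2)}}$ both intertwine $(\phi^{(2)}_i)$ with itself, so the uniqueness clause in the universal property of $X^{(2)}$ forces $\bar\phi\,\alpha = \mathrm{id}_{X^{(2)}}$; likewise $\alpha\,\bar\phi = \mathrm{id}_{X^{(1)}}$. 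Hence $\bar\phi$ is a topological isomorphism with $\phi^{(2)}_i\bar\phi = \phi^{(1)}_i$ for each $i$, as required. The only point demanding any care in the whole argument is the continuity bookkeeping in (3) — that a map into $G$ is continuous iff all its components are, and that ``continuous into $G$ with image in $X$'' coincides with ``continuous into $X$'' — which is just the universal property of the product topology together with the definition of the subspace topology; everything else is formal.
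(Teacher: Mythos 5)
Your proposal is correct and complete: the explicit construction of $X$ inside the product, the verification of the universal property via $\varphi(y)=(\varphi_i(y))_i$, and the uniqueness argument for (2) are exactly the standard proof. The paper itself gives no proof of this proposition --- it is quoted from Wilson's \emph{Profinite Groups} \cite[Proposition 1.1.4]{w} --- and your argument coincides with the one given there, including the correct observation that closedness of $X$ in $G$ (via Hausdorffness of the $X_i$) is not needed for this statement but matters later.
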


The result above shows that the inverse limit of an inverse system
$(X_i,\phi_{ij})$ exists and is unique up to isomorphism \footnote{
We talk about topological groups throughout the paper, for which
\textit{isomorphism} means group isomorphism and topological space
homeomorphism, i.e., group isomorphism in addition should be
continuous.}.  We also have the following important characterization
of profinite groups.
\begin{proposition} \emph{\cite[Corollary 1.2.4]{w}}
Let $X$ be a topological group. The following are equivalent:\\[1mm]
{\rm (1)}. $X$ is profinite, i.e., it is an inverse limit of an inverse system;\\[0.5mm]
{\rm (2)}. $X$ is isomorphic to a closed subgroup of a product group of finite groups;\\[0.5mm]
{\rm (3)}. $X$ is compact and $\bigcap (N \mid N \lhd_{O} X) = 1$ ($\lhd_{O}$ means that $N$ is open and normal);\\[0.5mm]
{\rm (4)}. $X$ is compact and totally disconnected.
\end{proposition}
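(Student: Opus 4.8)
The plan is to prove the cycle of implications $(1)\Rightarrow(2)\Rightarrow(4)\Rightarrow(3)\Rightarrow(1)$. For $(1)\Rightarrow(2)$, suppose $X$ is the inverse limit of an inverse system $(X_i,\phi_{ij})$ of finite groups. By parts (2) and (3) of Proposition~\ref{prop:inverselimit} I may identify $X$ with the concrete realization $\{c\in G:\phi_{ij}\pi_j(c)=\pi_i(c)\text{ for }j\ge i\}$ inside $G=\prod_i X_i$. Since each $X_i$ is finite and hence Hausdorff and the maps $\phi_{ij},\pi_i$ are continuous, each condition $\phi_{ij}\pi_j(c)=\pi_i(c)$ cuts out a closed subset of $G$, so $X$ is a closed subgroup of the product group $G$ of finite groups. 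For $(2)\Rightarrow(4)$, a product of finite discrete groups is compact by Tychonoff and totally disconnected (its connected components are singletons), a closed subspace of a compact space is compact, and any subspace of a totally disconnected space is totally disconnected; all of this is routine.

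The substantive step is $(4)\Rightarrow(3)$. Compactness is inherited from~(4) (and a totally disconnected topological group is automatically Hausdorff, since the closure of $\{1\}$ is a connected subgroup), so I only need $\bigcap\{N:N\lhd_O X\}=1$. First I would invoke the topological fact that a compact Hausdorff totally disconnected space is zero-dimensional, i.e.\ its clopen sets form a basis. Given $x\ne 1$, choose a clopen neighborhood $U$ of $1$ with $x\notin U$; using continuity of multiplication and compactness of $U$, produce a symmetric open neighborhood $V$ of $1$ with $V\subseteq U$ and $UV\subseteq U$, so that $H=\bigcup_{n\ge 1}V^n$ is an open subgroup contained in $U$. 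Since $X$ is compact and $H$ is open, $H$ has finite index, and its normal core $N=\bigcap_{g\in X}gHg^{-1}$ is then a finite intersection of conjugates, hence an open normal subgroup of finite index with $N\subseteq U$ and $x\notin N$. Thus no $x\ne 1$ lies in every open normal subgroup. I expect this to be the main obstacle of the proof: one must know (or establish) that compactness plus total disconnectedness forces zero-dimensionality, and then carry out the ``swallowing'' argument that inflates a clopen neighborhood of $1$ into an open subgroup.

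For $(3)\Rightarrow(1)$, let $I$ be the set of open normal subgroups of $X$ ordered by reverse inclusion; it is directed because the intersection of two open normal subgroups is again one. Each $X/N$ is finite (an open subgroup of a compact group has finite index) and discrete, and for $N\subseteq N'$ the canonical surjection $X/N\to X/N'$ is a continuous homomorphism, so $(X/N)_{N\in I}$ with these maps is an inverse system of finite groups. The quotient maps $X\to X/N$ form a compatible family, so the universal property of the inverse limit yields a continuous homomorphism $\vphi:X\to\varprojlim_{N}X/N$. Its kernel is $\bigcap_N N=1$ by~(3), so $\vphi$ is injective; for surjectivity, a compatible family $(g_N N)$ determines closed cosets $g_N N$ in the compact space $X$ that have the finite intersection property (as $g_{N_1\cap\cdots\cap N_k}(N_1\cap\cdots\cap N_k)$ lies in each $g_{N_i}N_i$), so their intersection is nonempty and any point of it is a preimage. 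A continuous bijection from a compact space to a Hausdorff space is a homeomorphism, so $\vphi$ is an isomorphism of topological groups and $X$ is profinite, closing the cycle.
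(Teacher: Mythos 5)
The paper does not prove this proposition; it is quoted directly from Wilson's book (\cite[Corollary 1.2.4]{w}) as background material, so there is no in-paper argument to compare against. Your cycle $(1)\Rightarrow(2)\Rightarrow(4)\Rightarrow(3)\Rightarrow(1)$ is the standard textbook proof and is correct in all its steps: the identification of the inverse limit with a closed subgroup of the product via Proposition~\ref{prop:inverselimit}(3), the Hausdorff argument via the connectedness of $\overline{\{1\}}$ (which is indeed indiscrete, hence connected, in any topological group), the zero-dimensionality of compact Hausdorff totally disconnected spaces followed by the swallowing argument producing an open subgroup inside a clopen neighborhood and passing to its (finite-index, hence open) normal core, and the compactness/finite-intersection-property argument for surjectivity of $X\to\varprojlim_N X/N$. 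You correctly identify $(4)\Rightarrow(3)$ as the step carrying the real content.
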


By the above proposition, we see that \textit{a Cantor group is an
Abelian profinite group without isolated points.}

\begin{proposition} \emph{\label{lem:subinverselimit}}
Assume that the directed set is $I = \Z^{+}$ with the usual order.
For an inverse system $(X_i, \phi_{ij})_{j \ge i}$ with the inverse
limit $(X,\phi_i)$, every non-finite sub-inverse system still has
the same inverse limit $(X,\phi_{i_k})$ up to isomorphism.
\end{proposition}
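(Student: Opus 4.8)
The plan is to prove this by the universal property of the inverse limit, using the one special feature of the chain $I=\Z^{+}$: every infinite subset is cofinal. Write the sub-inverse system as $(X_{i_k},\phi_{i_k i_l})_{l\ge k}$, where $i_1<i_2<\cdots$ is the chosen infinite subsequence of $\Z^{+}$, and note that this is again an inverse system over the directed (in fact linearly ordered) index set $\{i_k\}$. Since the inverse limit of a fixed inverse system is unique up to isomorphism (Proposition~\ref{prop:inverselimit}(2)), it suffices to show that the original group $X$, equipped with the family $(\phi_{i_k}:X\to X_{i_k})$ already at hand, is an inverse limit of the subsystem.

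First I would check that $(\phi_{i_k})$ is a compatible family for the subsystem: for $k\le l$ we have $i_k\le i_l$, so $\phi_{i_k i_l}\phi_{i_l}=\phi_{i_k}$ is just an instance of the compatibility of $(\phi_i)$ with the full system. The substantive step is verifying the universal property. Given a topological group $Y$ and a compatible family $(\psi_{i_k}:Y\to X_{i_k})$ for the subsystem, I would extend it to a compatible family $(\psi_i:Y\to X_i)$ for the \emph{full} system: for each $i\in\Z^{+}$ choose $k$ with $i_k\ge i$ (possible because $\{i_k\}$ is cofinal in $\Z^{+}$) and set $\psi_i:=\phi_{i\,i_k}\psi_{i_k}$. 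Independence of the choice of $k$, compatibility ($\phi_{ij}\psi_j=\psi_i$ for $i\le j$), and continuity of each $\psi_i$ all follow from the cocycle identities $\phi_{ab}\phi_{bc}=\phi_{ac}$ and the assumed compatibility of $(\psi_{i_k})$ — these are routine diagram chases. Applying the universal property of $(X,\phi_i)$ to $(\psi_i)$ produces a unique continuous homomorphism $\psi:Y\to X$ with $\phi_i\psi=\psi_i$ for all $i$, hence in particular $\phi_{i_k}\psi=\psi_{i_k}$ for all $k$.

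Finally I would check uniqueness of $\psi$ relative to the subsystem: if $\psi':Y\to X$ also satisfies $\phi_{i_k}\psi'=\psi_{i_k}$ for all $k$, then for any $i$, picking $i_k\ge i$ gives $\phi_i\psi'=\phi_{i\,i_k}\phi_{i_k}\psi'=\phi_{i\,i_k}\psi_{i_k}=\psi_i$, so $\psi'$ satisfies the defining equations of $\psi$ for the full system and therefore $\psi'=\psi$. This shows $(X,\phi_{i_k})$ has the universal property of the inverse limit of the subsystem, and Proposition~\ref{prop:inverselimit}(2) then gives the asserted isomorphism. I do not expect a genuine obstacle: the only point needing care is the well-definedness of the extension $\psi_i$, which rests entirely on cofinality of the index subset — the ingredient that would fail for a general directed set but is automatic for $\Z^{+}$. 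As an alternative, one could argue concretely with the explicit model of Proposition~\ref{prop:inverselimit}(3), exhibiting mutually inverse continuous homomorphisms between the two explicit inverse limits by deleting, respectively reconstructing via $c_i=\phi_{i\,i_k}(c_{i_k})$, the coordinates outside $\{i_k\}$.
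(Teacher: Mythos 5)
Your proof is correct and rests on the same key idea as the paper's: using cofinality of an infinite subset of $\Z^{+}$ to extend a compatible family indexed by the subsystem to one indexed by the full system via $\psi_i=\phi_{i\,i_k}\psi_{i_k}$. The only difference is organizational — you verify the universal property for $(X,\phi_{i_k})$ directly and invoke uniqueness of inverse limits, whereas the paper applies the same extension trick to the particular compatible family on a chosen inverse limit $X'$ of the subsystem and then checks that the two induced maps $X\to X'$ and $X'\to X$ compose to the identities.
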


\begin{proof}
Consider a non-finite sub-inverse system $(X_{i_k}, \phi_{i_k
i_t})_{t \ge k }$. Assume that $(X^{'},\phi^{(1)}_{i_k})$ is the
inverse limit. Obviously, $(X,\phi_{i_k})$ is compatible with
$(X_{i_k}, \phi_{i_k i_t})_{t \ge k }$, so there is a unique
homomorphism $\phi^{(1)} : X \to X^{'}$ such that $\phi_{i_k} =
\phi^{(1)}_{i_k} \phi^{(1)}$.

For any $X_q$ not in the sub-inverse system, choose $i_k > q$. We
have that $\phi_{q i_k} : X_{i_k} \to X_q$ and $\phi^{(1)}_q =
\phi_{q i_k} \phi^{(1)}_{i_k} : X^{'} \to X_q$ are  homomorphisms.
We  will prove that $(X^{'}, \phi^{(1)}_i)$ is compatible with
$(X_i, \phi_{ij})_{j \ge i }$, for which it suffices to show that
the following diagram:

\begin{diagram}
            &                             & X^{'}                     &                      &                \\
            &\ldTo^{\phi^{(1)}_{i_k}}     & \dTo_{\phi^{(1)}_q}       &\rdTo^{\phi^{(1)}_{i_t}}         &\\
X_{i_k}     &\rTo_{\phi_{q i_k}}    & X_q                       &\rTo_{\phi_{i_t q}}    & X_{i_t}\
\end{diagram}
is commutative. The left half of the above diagram follows from the
definition of $\phi^{(1)}_q$. The right half follows from
$\phi^{(1)}_{i_t} = \phi_{i_t q} \phi_{q i_k} \phi^{(1)}_{i_k}
=\phi_{i_t q} \phi^{(1)}_q$. So $(X^{'}, \phi^{(1)}_i)$ is
compatible with $(X_i, \phi_{ij})_{j \ge i }$ and there is a unique
homomorphism  $\phi^{(2)} : X^{'} \to X$ such that $\phi^{(1)}_{i} =
\phi_i \phi^{(2)}.$

By the universal property for $(X^{'},\phi^{(1)}_{i_k})$, there is
only one map $F : X^{'} \to X^{'}$ with the property that
$\phi^{(1)}_{i_k} F = \phi^{(1)}_{i_k}$ for each $i$. However, both
$\phi^{(1)} \phi^{(2)}$ and $\mathrm{id}_{X^{'}}$ have this
property, so $\phi^{(1)} \phi^{(2)} = \mathrm{id}_{X^{'}}$.
Similarly, for $\phi^{(2)} \phi^{(1)}: X \to X$ we have $\phi_{i_k}
\phi^{(2)} \phi^{(1)} = \phi_{i_k}$. For any $\phi_j,$ $\phi_j =
\phi_{j i_k} \phi_{i_k}$ when $i_k > j$, so we have $\phi_j
\phi^{(2)} \phi^{(1)} = \phi_{j i_k} \phi_{i_k} \phi^{(2)}
\phi^{(1)} = \phi_{j i_k} \phi_{i_k} = \phi_j,$ which implies
$\phi^{(2)} \phi^{(1)} = \mathrm{id}_{X}$. Thus, $\phi^{(1)} : X \to
X^{'}$ is an isomorphism.
\end{proof}

\begin{proposition} \emph{\cite[Proposition 1.1.7]{w}}
\label{prop:procriterion} Let $G$ be a compact Hausdorff totally
disconnected space. Then $G$ is the inverse limit of its discrete
quotient spaces.
\end{proposition}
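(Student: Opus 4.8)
The plan is to realize $G$ concretely as the inverse limit of the finite quotients it admits and then check the universal property by invoking Proposition \ref{prop:inverselimit}. The first ingredient I would record is the standard topological fact that a compact Hausdorff totally disconnected space is zero-dimensional: the clopen subsets form a basis for its topology. This is where the hypotheses really enter — in a compact Hausdorff space the quasi-component of a point equals its connected component, so total disconnectedness forces the intersection of all clopen sets containing $x$ to reduce to $\{x\}$, and a routine compactness argument then separates any point from any closed set not containing it by a clopen set.

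Next I would set up the index category. Let $I$ be the collection of all finite partitions of $G$ into nonempty clopen sets, ordered by refinement ($\mathcal P \le \mathcal Q$ iff $\mathcal Q$ refines $\mathcal P$). Given $\mathcal P,\mathcal Q\in I$, the partition whose blocks are the nonempty intersections $P\cap Q$ lies in $I$ and refines both, so $I$ is directed. For $\mathcal P\in I$ set $X_{\mathcal P}=G/\mathcal P$, the finite discrete set of blocks, with quotient map $q_{\mathcal P}\colon G\to X_{\mathcal P}$; these are exactly the discrete quotient spaces of $G$, since a discrete quotient of the compact space $G$ is finite and its fibers, being preimages of open (resp.\ closed) points, are clopen, while conversely each finite clopen partition yields such a quotient. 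When $\mathcal P\le\mathcal Q$ every block of $\mathcal Q$ is contained in a unique block of $\mathcal P$, giving a surjection $\phi_{\mathcal P\mathcal Q}\colon X_{\mathcal Q}\to X_{\mathcal P}$ with $q_{\mathcal P}=\phi_{\mathcal P\mathcal Q}q_{\mathcal Q}$; one checks $\phi_{\mathcal P\mathcal P}=\mathrm{id}$ and $\phi_{\mathcal P\mathcal Q}\phi_{\mathcal Q\mathcal R}=\phi_{\mathcal P\mathcal R}$, so $(X_{\mathcal P},\phi_{\mathcal P\mathcal Q})$ is an inverse system and $(q_{\mathcal P})$ a compatible family. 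By Proposition \ref{prop:inverselimit} there is an inverse limit $(X,\phi_{\mathcal P})$ and a canonical continuous map $\theta\colon G\to X$ with $\phi_{\mathcal P}\theta=q_{\mathcal P}$.

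It then remains to show $\theta$ is an isomorphism. Continuity is automatic from compatibility. For injectivity, if $x\ne y$ I would pick (using zero-dimensionality) a clopen $U$ with $x\in U$, $y\notin U$; then $\{U,G\setminus U\}\in I$ already separates $x$ from $y$, so $\theta(x)\ne\theta(y)$. Surjectivity is the one genuinely nontrivial point: given $c=(c_{\mathcal P})\in X$, compatibility of $c$ means $q_{\mathcal Q}^{-1}(c_{\mathcal Q})\subseteq q_{\mathcal P}^{-1}(c_{\mathcal P})$ whenever $\mathcal P\le\mathcal Q$, each such set is a nonempty closed subset of $G$ (because $q_{\mathcal P}$ is onto), and directedness of $I$ gives the finite intersection property; hence by compactness of $G$ there is $x\in\bigcap_{\mathcal P}q_{\mathcal P}^{-1}(c_{\mathcal P})$, and $\theta(x)=c$. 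Finally $\theta$ is a continuous bijection from the compact space $G$ to the Hausdorff space $X$ (closed in a product of finite discrete groups), hence a homeomorphism, which identifies $G$ with the inverse limit of its discrete quotients. I expect the surjectivity step to be the main obstacle, since it is precisely where compactness is indispensable; the zero-dimensionality lemma feeding injectivity is the only other place the hypotheses are essentially used.
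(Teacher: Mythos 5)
The paper offers no proof of this statement---it is quoted directly from Wilson's book---so there is nothing internal to compare against; judged on its own, your argument is correct and is the standard one: identify the discrete quotient spaces with finite clopen partitions, use compactness, Hausdorffness and total disconnectedness to get a clopen basis (hence injectivity), get surjectivity from the finite intersection property, and conclude via the fact that a continuous bijection from a compact space to a Hausdorff space is a homeomorphism. One cosmetic caveat: Proposition \ref{prop:inverselimit} is stated in the paper for inverse systems of topological \emph{groups}, whereas your system consists of finite discrete \emph{spaces} (and near the end you write ``product of finite discrete groups'' where you mean spaces); the construction in part (3) of that proposition carries over verbatim to spaces, but strictly speaking you are invoking its space-level analogue.
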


We interpret a $class$ in the usual sense that it is closed with
respect to taking isomorphic images. Let $\mathcal{C}$ be some class
of finite groups. We call a group $F$ a $\mathcal{C}$-group if $F
\in \mathcal{C}$, and $G$ is called a pro-$\mathcal{C}$ group if it
is an inverse limit of $\mathcal{C}$-groups. We say that
$\mathcal{C}$ is closed for quotients (resp. subgroups) if every
quotient group (resp. subgroup) of a $\mathcal{C}$-group is also a
$\mathcal{C}$-group. Similarly, we say that $\mathcal{C}$ is closed
for direct products if $F_1 \times F_2 \in \mathcal{C}$ whenever
$F_1 \in \mathcal{C}$ and $F_2 \in \mathcal{C}$. For example, for
the class of finite $p$-groups where $p$ is a fixed prime, an
inverse limit of finite $p$-groups is called a $pro$-$p$ group; for
the class of finite cyclic groups, an inverse limit of finite cyclic
groups is called a $procyclic$ group.

The next result describes how  a given profinite group, its
subgroups and quotient groups, can be represented explicitly as
inverse limits.

\begin{proposition}\emph{\cite[Theorem 1.2.5]{w}}
\label{prop:quotient} {\rm (1)}. Let $G$ be a profinite group. If
$I$ is a filter base of closed normal subgroups of $G$ such that
$\bigcap (N \mid N \in I) = 1$, then
$$G \cong {\lim_{\longleftarrow}}_{N \in I} G/N .$$
Moreover $$ H \cong {\lim_{\longleftarrow}}_{N \in I} H/(H \cap N) $$
for each closed subgroup $H$, and
$$ G/K \cong {\lim_{\longleftarrow}}_{N \in I} G/KN $$
for each closed normal subgroup $K$.\\[0.5mm]
{\rm (2)}. If $\mathcal{C}$ is a class of finite groups which is
closed for subgroups and direct products, then closed subgroups,
direct products and inverse limits of pro-$\mathcal{C}$ groups are
pro-$\mathcal{C}$ groups. If in addition $\mathcal{C}$ is closed for
quotients, then quotient groups of pro-$\mathcal{C}$ groups by
closed normal subgroups are pro-$\mathcal{C}$ groups.
\end{proposition}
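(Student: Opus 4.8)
The plan is to reduce everything to the universal property of the inverse limit (Proposition~\ref{prop:inverselimit}), the realization of an inverse limit as a closed subgroup of the product, the characterization of profinite groups above, and two applications of compactness via the finite intersection property. For part (1), I would first note that a filter base $I$ of closed normal subgroups is directed by reverse inclusion, and that for $M \subseteq N$ the natural surjections $\phi_{NM} : G/M \to G/N$ make $(G/N,\ \phi_{NM})_{M \subseteq N}$ an inverse system of topological groups; since $N$ is closed, each $G/N$ is a compact Hausdorff group. The quotient homomorphisms $q_N : G \to G/N$ form a compatible family, so Proposition~\ref{prop:inverselimit} gives a unique continuous homomorphism $\theta : G \to {\lim_{\longleftarrow}}_{N \in I} G/N$. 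Its kernel is $\bigcap_{N \in I} N = 1$, so $\theta$ is injective; surjectivity is where compactness enters: given $(x_N)_N$ in the inverse limit, the cosets $q_N^{-1}(x_N)$ are closed and nonempty and, using compatibility of the $x_N$ together with the filter-base property, have the finite intersection property, so their intersection in the compact group $G$ contains a point $g$ with $\theta(g) = (x_N)_N$. A continuous bijection from a compact space onto a Hausdorff space is a homeomorphism, so $\theta$ is an isomorphism of topological groups.

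For the ``moreover'' assertions: if $H \le G$ is closed, then $H$ is compact and totally disconnected, hence profinite by the characterization above, and $\{H \cap N : N \in I\}$ is a filter base of closed normal subgroups of $H$ with $\bigcap_N (H \cap N) = H \cap \bigcap_N N = 1$, so the first part of (1) applied to $H$ gives $H \cong {\lim_{\longleftarrow}}_N H/(H \cap N)$. If $K \lhd G$ is closed, then each $KN$ is closed (it is the continuous image of the compact set $K \times N$ under multiplication) and normal, so the $KN/K$ form a filter base of closed normal subgroups of $G/K$; the crucial point is the identity $\bigcap_{N \in I} KN = K$, which I would prove by the same trick: if $g \in \bigcap_N KN$, the sets $gN \cap K$ ($N \in I$) are closed, nonempty and have the finite intersection property, so compactness of $G$ yields $k \in K$ with $g^{-1}k \in N$ for every $N$, whence $g^{-1}k \in \bigcap_N N = 1$ and $g = k \in K$. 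The same argument with the open normal subgroups of $G$ (whose intersection is $1$ since $G$ is profinite) in place of $I$ shows $\bigcap\{MK : M \lhd_O G\} = K$, so $G/K$ is compact with trivial intersection of open normal subgroups, hence profinite; applying the first part of (1) to $G/K$ with the filter base $\{KN/K\}$, together with the canonical isomorphism $(G/K)/(KN/K) \cong G/KN$, yields $G/K \cong {\lim_{\longleftarrow}}_N G/KN$.

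For part (2), write a pro-$\mathcal{C}$ group $G$ as ${\lim_{\longleftarrow}} G_i$ with $G_i \in \mathcal{C}$; by Proposition~\ref{prop:inverselimit}(3) this exhibits $G$ as a closed subgroup of $\prod_i G_i$, and the kernels $N_i = \ker(\phi_i : G \to G_i)$ form a filter base of open normal subgroups with $\bigcap_i N_i = 1$ and $G/N_i \cong \phi_i(G) \le G_i$, which lies in $\mathcal{C}$ since $\mathcal{C}$ is closed for subgroups. A closed subgroup $H \le G$ then satisfies $H \cong {\lim_{\longleftarrow}}_i H/(H \cap N_i)$ by part (1), and $H/(H \cap N_i) \cong HN_i/N_i$ is a subgroup of the $\mathcal{C}$-group $G/N_i$, hence in $\mathcal{C}$, so $H$ is pro-$\mathcal{C}$. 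A product $\prod_\lambda G^{(\lambda)}$ of pro-$\mathcal{C}$ groups is compact (Tychonoff) and totally disconnected, hence profinite, and the subgroups obtained by fixing finitely many coordinates to kernels of level maps and leaving the rest equal to the whole factor form a filter base of open normal subgroups with trivial intersection whose quotients are finite direct products of $\mathcal{C}$-groups, hence $\mathcal{C}$-groups (iterate closure under binary products); so the product is pro-$\mathcal{C}$ by part (1). An inverse limit of pro-$\mathcal{C}$ groups is a closed subgroup of their product (Proposition~\ref{prop:inverselimit}(3)), hence pro-$\mathcal{C}$ by the two cases just treated. Finally, if $\mathcal{C}$ is in addition closed for quotients and $K \lhd G$ is closed, then $G/K$ is profinite and part (1) gives $G/K \cong {\lim_{\longleftarrow}}_i G/KN_i$ with each $G/KN_i$ a quotient of the $\mathcal{C}$-group $G/N_i$, hence a $\mathcal{C}$-group, so $G/K$ is pro-$\mathcal{C}$.

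The step I expect to be the main obstacle is the pair of finite-intersection-property arguments in part (1) --- surjectivity of $\theta$ and the identity $\bigcap_N KN = K$ --- since these are exactly where the topology (compactness of $G$, closedness of $K$, $N$ and $KN$) does real work; once they are in place, the rest is diagram chasing with the universal property and routine bookkeeping about which class memberships are preserved under subgroups, quotients and finite products.
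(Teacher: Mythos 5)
The paper does not prove this proposition; it is imported verbatim from Wilson's \emph{Profinite Groups} (Theorem 1.2.5 of \cite{w}), so there is no in-paper argument to compare against. Your proof is correct and is essentially the standard one from that reference: the universal-property map $\theta$ with injectivity from $\bigcap N = 1$ and surjectivity via the finite intersection property, the identity $\bigcap_N KN = K$ by the same compactness trick, and the reduction of part (2) to quotients by the open kernels $N_i = \ker\phi_i$; I see no gaps.
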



\vspace*{0.5cm}
\setcounter{equation}{0}

\section{More Results about Limit-periodic Potentials}
\subsection{Generate limit-periodic potentials from a procyclic group.}
We shall see how to generate limit-periodic potentials from a
procyclic group, that is, a Cantor group which admits a minimal
translation.

\begin{proposition} \label{prop:hull} \emph{\cite[Lemma 2.2.]{a}}
Given a Cantor group $\Omega$ and a minimal translation $T$, for each $f \in C(\Omega,\R)$, define $F : \Omega \rarrow
\ell^\infty(\Z)$, $F(\om) = (f(T^n(\om)))_{n \in \Z}$. Then we have
that $F(\om)$ is limit-periodic and $F (\Omega) =
\mathrm{hull}(F(\om))$ for every  $\om \in \Omega$.
\end{proposition}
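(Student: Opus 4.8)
The plan is to verify the two assertions separately: first that $F(\omega)$ is a limit-periodic potential for every $\omega \in \Omega$, and then that $F(\Omega)$ coincides with $\mathrm{hull}(F(\omega))$. For the first part, I would use the fact that $\Omega$, being a Cantor group (an Abelian profinite group), is by Proposition \ref{prop:procriterion} the inverse limit of its finite discrete quotients $\Omega/N$ with $N \lhd_O \Omega$; equivalently there are arbitrarily small open subgroups $N$ of $\Omega$. Given $f \in C(\Omega,\R)$ and $\eps > 0$, uniform continuity of $f$ on the compact group $\Omega$ yields an open subgroup $N$ such that $f$ varies by less than $\eps$ on each coset of $N$. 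Pick a continuous function $f_N$ that is constant on each coset of $N$ (for instance the average of $f$ over the coset, or simply the value of $f$ at a chosen coset representative) with $\|f - f_N\|_\infty < \eps$. Since $T$ is a translation by some $\omega_0$, the orbit of any $\omega$ under $T$ projects to the cyclic orbit of $\omega_0 N$ in the finite group $\Omega/N$, which is periodic with some period $h_N$ dividing $|\Omega/N|$; hence $(f_N(T^n\omega))_{n\in\Z}$ is an $h_N$-periodic sequence. Therefore $F(\omega)$ is approximated within $\eps$ in $\ell^\infty(\Z)$ by periodic potentials, so it is limit-periodic.

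For the second part, I would show $F(\Omega) = \mathrm{hull}(F(\omega))$ by a compactness-and-density argument. Note first that $F$ intertwines $T$ with the shift: $\sigma(F(\omega)) = (f(T^{n+1}\omega))_n = F(T\omega)$, so $F(\Omega)$ is shift-invariant and contains $\mathrm{orb}(F(\omega)) = \{F(T^n\omega) : n \in \Z\}$. Also $F$ is continuous from the compact group $\Omega$ into $\ell^\infty(\Z)$ (this follows from the uniform-approximation argument above, or directly from uniform continuity of $f$ together with continuity of each $T^n$), so $F(\Omega)$ is compact, hence closed in $\ell^\infty(\Z)$. Since $T$ is minimal, $\{T^n\omega : n\in\Z\}$ is dense in $\Omega$, and continuity of $F$ gives that $\mathrm{orb}(F(\omega)) = F(\{T^n\omega\})$ is dense in $F(\Omega)$. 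Combining: $F(\Omega)$ is a closed set containing $\mathrm{orb}(F(\omega))$ as a dense subset, which forces $F(\Omega) = \overline{\mathrm{orb}(F(\omega))} = \mathrm{hull}(F(\omega))$.

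The step I expect to require the most care is establishing continuity of $F: \Omega \to \ell^\infty(\Z)$ (equivalently, uniformity of the periodic approximation across the whole orbit). The subtlety is that $\ell^\infty(\Z)$ carries the sup-norm, so one needs that for $\omega, \omega'$ close in $\Omega$, the quantity $\sup_{n\in\Z} |f(T^n\omega) - f(T^n\omega')|$ is small — uniformly in $n$. This is where the profinite structure is essential: if $\omega^{-1}\omega' \in N$ for an open subgroup $N$, then $(T^n\omega)^{-1}(T^n\omega') = \omega^{-1}\omega' \in N$ for every $n$ (here abelianness and the translation form of $T$ are used), so $T^n\omega$ and $T^n\omega'$ lie in the same coset of $N$ for all $n$ simultaneously; choosing $N$ small enough that $f$ oscillates by less than $\eps$ on $N$-cosets then gives the uniform bound. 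Once this observation is in place, both the limit-periodicity and the density/closedness facts follow routinely, and the identification $F(\Omega) = \mathrm{hull}(F(\omega))$ is immediate.
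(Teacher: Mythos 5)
Your proof is correct and complete; the paper itself offers no argument for this proposition (it simply cites Avila's Lemma 2.2), and what you wrote is exactly the standard proof given there: approximate $f$ by functions constant on cosets of small finite-index open subgroups to get periodicity, and use the equivariance $\sigma \circ F = F \circ T$ together with the key observation that $(T^n\omega)^{-1}(T^n\omega') = \omega^{-1}\omega'$ to get uniform continuity of $F$, compactness of $F(\Omega)$, and density of the orbit. Your identification of the $n$-uniformity of the coset estimate as the crux is exactly right.
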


By Proposition \ref{prop:avilahull}, we know that
$\mathrm{hull}(F(e))$ ($e$ is the identity of $\Omega$) is a finite
cyclic group or it is a Cantor group with the unique group
structure: $\sigma^{0}(F(e)) = F(e)$ is the identity element, and
$\sigma^{i}(F(e)) \cdot \sigma^{j}(F(e)) = \sigma^{i+j}(F(e))$. The
translation $T$ defined by $T(\sigma^{i}(F(e))) =
\sigma^{i+1}(F(e))$ is minimal.

Since $C(\Omega, \R)$ can generate a class of limit-periodic
potentials, we will get certain class of topological groups by
taking the hulls of these limit-periodic potentials. The group in
this class is a Cantor group or a finite cyclic group.  What is the
relation between this class of topological groups and the original
Cantor group $\Omega$? To answer this question, we need two lemmas
first. (Note that we will adopt the notation $F(e)$ throughout this
section, that is, $F(e) = (f(T^n(e)))_{n \in \Z}$ as in
Proposition~\ref{prop:hull}.)

\begin{lemma} \label{lem:quotient}
For any $f \in C(\Omega,\R)$, $\mathrm{hull}(F(e))$ is a quotient
group of $\Omega$.
\end{lemma}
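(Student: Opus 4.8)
The plan is to exhibit a continuous surjective homomorphism from $\Omega$ onto $\mathrm{hull}(F(e))$, which by compactness automatically is a quotient map of topological groups. The natural candidate is the orbit-map associated with the minimal translation $T$. Recall $T(\omega) = \omega \cdot \omega_0$ for some fixed $\omega_0 \in \Omega$, and $F(e) = (f(T^n(e)))_{n \in \Z}$. First I would define $\Phi : \Omega \to \ell^\infty(\Z)$ by $\Phi(\omega) = F(\omega) = (f(T^n(\omega)))_{n \in \Z}$, which is exactly the map $F$ of Proposition~\ref{prop:hull}; that proposition already tells us $F(\Omega) = \mathrm{hull}(F(e))$, so $\Phi$ is a continuous surjection of $\Omega$ onto $\mathrm{hull}(F(e))$. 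The real content is that $\Phi$ is a group homomorphism when $\mathrm{hull}(F(e))$ carries the group structure from Proposition~\ref{prop:avilahull} (identity $\sigma^0(F(e)) = F(e)$).

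The key computation is to check that $\Phi$ respects multiplication. Since $T^n(\omega) = \omega \cdot \omega_0^n$ (using that $\Omega$ is Abelian, from Proposition~\ref{prop:avilahull}), we get $\Phi(\omega) = (f(\omega \cdot \omega_0^n))_{n \in \Z}$. On the dense subgroup generated by $\omega_0$, note $\Phi(\omega_0^k) = (f(\omega_0^{n+k}))_{n\in\Z} = \sigma^k(F(e))$, because $F(e)_n = f(T^n(e)) = f(\omega_0^n)$. Thus $\Phi$ sends the cyclic subgroup $\{\omega_0^k : k \in \Z\}$ onto $\{\sigma^k(F(e)) : k \in \Z\} = \mathrm{orb}(F(e))$ by $\omega_0^k \mapsto \sigma^k(F(e))$, and on this subgroup it is visibly a homomorphism onto the dense subgroup $\mathrm{orb}(F(e))$ of $\mathrm{hull}(F(e))$ with its group law. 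Then I would invoke continuity of $\Phi$ together with continuity of the group operations on both sides, and density of $\{\omega_0^k\}$ in $\Omega$ (minimality of $T$ applied at $\omega = e$), to conclude that $\Phi(\omega\cdot\omega') = \Phi(\omega)\cdot\Phi(\omega')$ for all $\omega,\omega' \in \Omega$: both sides are continuous functions of $(\omega,\omega')$ agreeing on the dense set $\{\omega_0^k\}\times\{\omega_0^\ell\}$. Likewise $\Phi$ sends the identity $e = \omega_0^0$ to $\sigma^0(F(e)) = F(e)$, the identity of the hull.

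Having established that $\Phi : \Omega \to \mathrm{hull}(F(e))$ is a continuous surjective homomorphism, I would finish by the standard fact: a continuous surjective homomorphism from a compact group onto a Hausdorff topological group is a quotient map, so $\mathrm{hull}(F(e)) \cong \Omega / \ker \Phi$ as topological groups, with $\ker\Phi$ a closed normal (here just closed, since everything is Abelian) subgroup. Hence $\mathrm{hull}(F(e))$ is a quotient group of $\Omega$, as claimed.

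I expect the only genuine obstacle to be the bookkeeping that the group structure on $\mathrm{hull}(F(e))$ supplied abstractly by Proposition~\ref{prop:avilahull} is the one making $\omega_0^k \mapsto \sigma^k(F(e))$ a homomorphism — i.e. matching the two ``identity $= F(e)$'' normalizations and confirming $\sigma^i(F(e))\cdot\sigma^j(F(e)) = \sigma^{i+j}(F(e))$, which is precisely the uniqueness clause of Proposition~\ref{prop:avilahull}. Everything else is a routine density-plus-continuity argument; no delicate estimates are needed since $F(\Omega) = \mathrm{hull}(F(e))$ is handed to us.
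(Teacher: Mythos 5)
Your proposal is correct and follows essentially the same route as the paper: both define $\phi(\omega)=F(\omega)$, get surjectivity from Proposition~\ref{prop:hull}, verify the homomorphism property via $F(T^m(e))=\sigma^m(F(e))$ together with density and continuity, and conclude $\mathrm{hull}(F(e))\cong\Omega/\ker\phi$. You merely spell out in detail the density-plus-continuity step that the paper compresses into ``it is not hard to see.''
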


\begin{proof}
Define $\phi$ by $\phi: \Omega \longrightarrow \mathrm{hull}(F(e)),
\phi(\omega) = F(\omega).$ It is not hard to see that the group
structure of $\mathrm{hull}(F(e))$ is like $F(\omega_1)\cdot
F(\omega_2) = F(\omega_1 \cdot \omega_2),$ since $F(T^m(e)) =
\sigma^m (F(e))$.  It follows that $\phi$ is a group homomorphism.
By Proposition~\ref{prop:hull}, $\phi$ is surjective. The continuity
of $\phi$ follows from compactness of $\Omega$ and 
continuity of $f$. So we have
$$\mathrm{hull}(F(e)) \cong \Omega/\mathrm{ker}(\phi),$$
implying the lemma.
\end{proof}

\begin{lemma} \label{lem:hulliso}
There exists an $f \in C(\Omega,\R)$ such that $\mathrm{hull}(F(e)) \cong \Omega.$
\end{lemma}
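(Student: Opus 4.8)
The plan is to construct $f \in C(\Omega, \mathbb{R})$ so that the homomorphism $\phi : \Omega \to \mathrm{hull}(F(e))$ from the proof of Lemma 4.9 has trivial kernel; then surjectivity (already established) together with compactness of $\Omega$ forces $\phi$ to be a topological isomorphism. Since $\ker(\phi) = \{\omega : F(\omega) = F(e)\} = \{\omega : f(T^n(\omega)) = f(T^n(e)) \text{ for all } n \in \mathbb{Z}\}$, the task reduces to choosing $f$ that "separates orbits" in the following weak sense: if $\omega \neq e$, then $f(T^n(\omega)) \neq f(T^n(e))$ for at least one $n$. Because $T$ is a minimal translation, $T^n(\omega) = \omega \cdot \omega_0^n$ and $T^n(e) = \omega_0^n$, so the condition $f(\omega \cdot \omega_0^n) = f(\omega_0^n)$ for all $n$ says that $f$ is constant on the two dense orbits through $e$ and through $\omega$ in a "synchronized" way; I must rule this out for every $\omega \neq e$ by a single choice of $f$.

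First I would use the profinite structure of $\Omega$: by Proposition 4.6 (a Cantor group is an Abelian profinite group), $\Omega \cong \varprojlim \Omega/N$ over a countable filter base of open (finite-index) normal subgroups $N_1 \supset N_2 \supset \cdots$ with $\bigcap_j N_j = \{e\}$, and each quotient $\Omega/N_j$ is finite cyclic (this is where procyclicity, forced by the existence of a minimal translation, enters — the image of the dense orbit of $\omega_0$ generates $\Omega/N_j$). Next I would pick, for each $j$, a continuous function $g_j : \Omega \to \mathbb{R}$ that factors through $\Omega/N_j$ and is injective on $\Omega/N_j$ (possible since $\Omega/N_j$ is finite, so any injection into $\mathbb{R}$ of its finitely many points, pulled back, works), and then set $f = \sum_j 2^{-j} \|g_j\|_\infty^{-1} g_j$ (normalizing so the series converges uniformly, hence $f \in C(\Omega,\mathbb{R})$), after further rescaling the $g_j$ so that the ranges of successive terms are "spread out" and do not interfere — e.g. arranging that the values $\{g_j(x) : x \in \Omega/N_j\}$ lie in disjoint dyadic-scale intervals so that knowing $f(\omega)$ determines each $g_j(\omega)$, hence the image of $\omega$ in every $\Omega/N_j$, hence $\omega$ itself. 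Concretely one can take $g_j$ to be an indicator-type combination adapted to the clopen partition of $\Omega$ by cosets of $N_j$.

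Alternatively, and perhaps more cleanly, I would pick a single $f \in C(\Omega,\mathbb{R})$ that is already injective on $\Omega$: since $\Omega$ is a compact metrizable totally disconnected space (a Cantor set, or a finite set), it embeds homeomorphically into $\mathbb{R}$, so there is an injective continuous $f : \Omega \to \mathbb{R}$. For such an $f$, if $f(T^n(\omega)) = f(T^n(e))$ with $n = 0$ we get $f(\omega) = f(e)$, hence $\omega = e$ by injectivity; so $\ker(\phi)$ is trivial immediately. This is the shortest route and avoids the series construction entirely. The main obstacle — really the only substantive point — is justifying that a continuous injection $\Omega \hookrightarrow \mathbb{R}$ exists: this is standard for compact metrizable zero-dimensional spaces (every such space embeds in the Cantor set $\{0,1\}^{\mathbb{N}} \subset \mathbb{R}$ via $\omega \mapsto \sum_j 2 \cdot 3^{-j} \chi_{U_j}(\omega)$ for a countable clopen separating family $\{U_j\}$), and I would spell out that embedding using the cosets of the $N_j$ as the clopen sets $U_j$. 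Once $f$ is injective, closing the argument is routine: $\phi$ is a continuous bijective homomorphism from a compact space to a Hausdorff space, hence a homeomorphism, hence $\mathrm{hull}(F(e)) \cong \Omega$.
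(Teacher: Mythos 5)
Your proposal is correct, and its overall strategy is the same as the paper's: invoke Lemma \ref{lem:quotient}, then choose $f$ so that $\ker(\phi)$ is trivial, observing that $F(\omega)=F(e)$ already forces $f(\omega)=f(e)$ at the coordinate $n=0$, so one never needs the full orbit. The difference is in the choice of $f$. You ask for $f$ to be \emph{injective} on all of $\Omega$, which requires embedding the compact metrizable zero-dimensional space $\Omega$ into $\R$ (via a separating clopen family or your series of coset-indicator functions). The paper gets away with much less: it metrizes $\Omega$ and takes $f(\omega)=\dist(e,\omega)$, which is generally far from injective but satisfies $f^{-1}(\{f(e)\})=\{e\}$, and that single separation property is all the argument uses. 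So your route buys a stronger conclusion about $f$ than is needed, at the cost of importing the embedding theorem for Cantor sets; the paper's distance-function trick is the more economical version of the same idea. Both correctly finish by noting that a continuous bijective homomorphism from a compact group onto a Hausdorff group is a topological isomorphism.
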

\begin{proof}
By Lemma~\ref{lem:quotient}, it suffices to prove that  there exists
some $f \in C(\Omega,\R)$ such that $\mathrm{ker}(\phi) = \{ e \}$.
Clearly, a Cantor group is metrizable (recall that any separable
compact space is metrizable). Introduce a metric on $\Omega$
compatible with the topology. Define a function $f: \Omega \to \R$
by $f(\omega) = \mathrm{dist}(e,\omega).$ Clearly, $f$ is
continuous, so there is a corresponding $F$ (defined as in
Proposition~\ref{prop:hull}) $: \Omega \to \ell^{\infty}(\Z)$ such
that $\mathrm{hull}(F(e))$ is a quotient group of $\Omega$. Consider
$\phi: \Omega \longrightarrow \mathrm{hull}(F(e)), \phi(\omega) =
F(\omega).$ If $F(\omega) = F(e)$, then $f(\omega) = f(e)$, that is,
$\mathrm{dist}(e,\omega) = \mathrm{dist}(e,e) = 0$, implying $\omega
= e$ and $\mathrm{ker}(\phi) = \{ e \}$.
\end{proof}

\begin{theorem} \label{thm:quocantor}
Given a Cantor group $\Omega$ and a minimal translation $T$, for
each $f \in C(\Omega,\R)$, $\mathrm{hull}(F(e))$ is a Cantor group
or  a finite cyclic group, and $C(\Omega,\R)$ can generate a class
of topological groups.  Then there is a one-to-one correspondence
between this class of topological groups and quotient groups of
$\Omega$.
\end{theorem}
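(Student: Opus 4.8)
The plan is to identify the class of topological groups produced here, namely $\mathcal{G} := \{\,\mathrm{hull}(F(e)) : f \in C(\Omega,\R)\,\}$ (taken up to isomorphism), with the class $\mathcal{Q}$ of quotient groups of $\Omega$ (also up to isomorphism); the asserted one-to-one correspondence is then just the identity on this common class $\mathcal{G}=\mathcal{Q}$. That each $\mathrm{hull}(F(e))$ is a Cantor group or a finite cyclic group is already contained in Propositions~\ref{prop:hull} and \ref{prop:avilahull}: $F(e)$ is limit-periodic, so $\mathrm{hull}(F(e))$ is compact, totally disconnected and Abelian, and it is topologically generated by $\sig^{0}(F(e))$, hence cyclic whenever it is finite. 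The inclusion $\mathcal{G}\subseteq\mathcal{Q}$ is exactly Lemma~\ref{lem:quotient}, which presents $\mathrm{hull}(F(e))$ as $\Omega/\ker(\phi)$. So the substance of the theorem is the reverse inclusion $\mathcal{Q}\subseteq\mathcal{G}$: every quotient $\Omega/N$, with $N$ a closed (hence normal, $\Omega$ being Abelian) subgroup, must be realized as some $\mathrm{hull}(F(e))$.

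To set this up I would first verify that $\Omega/N$ is again a group to which Lemma~\ref{lem:hulliso} applies. Since $\Omega$ is profinite, Proposition~\ref{prop:quotient}(2), applied with $\mathcal{C}$ the class of all finite groups, shows that $\Omega/N$ is profinite, so it is compact, Abelian and totally disconnected. If $\Omega/N$ has no isolated point it is a Cantor group; otherwise, by homogeneity of topological groups it is discrete, hence finite, and it is then cyclic because the image under the quotient map $q:\Omega\to\Omega/N$ of the dense orbit $\{T^{n}(e):n\in\Z\}=\{\omega_{0}^{\,n}:n\in\Z\}$ (where $T(\omega)=\omega\cdot\omega_{0}$) is dense, hence all of the finite group $\Omega/N$. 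In either case the induced map $\bar T(\bar\omega)=\bar\omega\cdot q(\omega_{0})$ is a minimal translation of $\Omega/N$, since $q$ carries the dense orbit of $e$ onto the dense orbit of $\bar e$ and left translations are homeomorphisms. Thus $\Omega/N$ is a Cantor group with a minimal translation, or a finite cyclic group with a generating translation.

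Finally I would apply Lemma~\ref{lem:hulliso} to $\Omega/N$ equipped with this translation $\bar T$ — the argument given there goes through unchanged when $\Omega/N$ is finite cyclic — to obtain $g\in C(\Omega/N,\R)$ with $\mathrm{hull}(G(\bar e))\cong\Omega/N$, where $G(\bar\omega)=(g(\bar T^{\,n}(\bar\omega)))_{n\in\Z}$. Put $f:=g\circ q\in C(\Omega,\R)$. Because $q$ is a homomorphism with $q(e)=\bar e$ and $q(\omega_{0})=\bar\omega_{0}$, it intertwines $T$ with $\bar T$, so $F(e)=(f(T^{n}(e)))_{n}=(g(\bar T^{\,n}(\bar e)))_{n}=G(\bar e)$, whence $\mathrm{hull}(F(e))=\mathrm{hull}(G(\bar e))\cong\Omega/N$. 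This exhibits every quotient of $\Omega$ in $\mathcal{G}$, giving $\mathcal{Q}\subseteq\mathcal{G}$ and thus $\mathcal{G}=\mathcal{Q}$. I expect the main obstacle to be the structural step in the middle paragraph — confirming that $\Omega/N$ really is a Cantor group (or a finite cyclic group) carrying a minimal translation, so that Lemma~\ref{lem:hulliso} is legitimately available — together with the small intertwining identity $F(e)=G(\bar e)$; everything else is formal bookkeeping.
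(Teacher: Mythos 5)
Your proposal is correct and follows essentially the same route as the paper: the forward inclusion is Lemma~\ref{lem:quotient}, and the reverse inclusion is obtained by checking that a quotient $\Omega/N$ is again Cantor or finite cyclic with an induced minimal translation, applying Lemma~\ref{lem:hulliso} there to get $f_0$ (your $g$), and pulling back along the quotient map via $f = f_0 \circ q$. Your version is if anything slightly more careful than the paper's, in that you justify why the quotient is Cantor or finite cyclic and note explicitly that Lemma~\ref{lem:hulliso} still applies in the finite cyclic case.
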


\begin{proof}
By Lemma \ref{lem:quotient}, we know that groups in this class are
quotient groups of $\Omega$. Let's show the converse direction. By
the definition, a quotient group of a Cantor group is still Cantor
or finitely cyclic. Given a quotient group $\Omega_0$ and a quotient
homomorphism $q: \Omega \to \Omega_0$,  we claim that  $T$ will
induce a minimal translation $T_0$ on $\Omega_0$ such that
$T_0([\omega]) = q(T(\omega)), [\omega] \in \Omega_0$. For writing
convenience, we assume that the group operation is addition and
$T(\omega) = \omega + \omega_1$. If $[\omega] = [\omega^{'}]$, then
$T_0([\omega]) = q(T(\omega)) = q(\omega_1 + \omega) = q(\omega_1) +
q(\omega) = [\omega_1] + [\omega] = [\omega_1] + [\omega^{'}] =
T_0([\omega^{'}])$, which gives that $T_0$ is a translation by
$[\omega_1]$. That $T_0$ is minimal follows from the fact that $T$
is minimal and $q$ is continuous. By Lemma \ref{lem:hulliso}, we
know that for $\Omega_0$ and $T_0$ there exists some $f_0 \in
C(\Omega_0,\R)$ such that $\mathrm{hull}((f_0(T^{n}_0 ([e])))_{n \in
\Z}) \cong \Omega_0.$ Let $f = f_0 \circ q$. Clearly, $f \in
C(\Omega,\R)$ and the following diagram

\begin{diagram}
                           & \Omega                     &                      &                \\
               & \dTo_{q}       &\rdDashto^{f}         &\\
 & \Omega_0                       &\rTo_{f_0}    &  \R
\end{diagram}
is commutative. So we have $\mathrm{hull}(F(e)) \cong
\mathrm{hull}((f_0(T^{n}_0 ([e])))_{n \in \Z}) \cong \Omega_0.$ The
proof is complete.
\end{proof}

We also have

\begin{theorem} \label{thm:iso}
Given a Cantor group $\Omega$ and a minimal translation $T$, for any
limit-periodic potential $d \in \ell^\infty(\Z)$ satisfying
$\mathrm{hull}(d) \cong \Omega$, there is an $f \in C(\Omega,\R)$
such that $f(T^n(e)) = d_n$ for every $n \in \Z$.
\end{theorem}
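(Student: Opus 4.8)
The plan is to transport the canonical shift dynamics on $\mathrm{hull}(d)$ back to $\Omega$ and then recover $d$ as a coordinate readout. Write $\Omega_d := \mathrm{hull}(d)$, with the group structure of Proposition~\ref{prop:avilahull}: $d = \sig^0(d)$ is the identity, $\sig^i(d)\cdot\sig^j(d) = \sig^{i+j}(d)$, and $\sig$ itself is translation by the topological generator $b := \sig(d)$. Let $g : \Omega_d \to \R$ be evaluation at the origin, $g(x) = x(0)$; as the restriction to $\Omega_d \subset \ell^\infty(\Z)$ of a bounded linear functional it is continuous, and $g(\sig^n(d)) = d(n) = d_n$ for all $n \in \Z$. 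So it is enough to build a topological group isomorphism $\Psi : \Omega \to \Omega_d$ with $\Psi(\om_0) = b$, where $T(\om) = \om\cdot\om_0$: since $T^n(e) = \om_0^{\,n}$ this gives $\Psi(T^n(e)) = \Psi(\om_0)^n = b^n = \sig^n(d)$, whence $f := g\circ\Psi \in C(\Omega,\R)$ satisfies $f(T^n(e)) = g(\sig^n(d)) = d_n$. The theorem thus reduces to producing such a $\Psi$.

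To do that I would take \emph{any} topological group isomorphism $\psi : \Omega \to \Omega_d$ (one exists by hypothesis, and it is automatically bicontinuous), put $a := \psi(\om_0)$, and note that $a$ is a topological generator of $\Omega_d$ because $\{\om_0^{\,n}\}$ is dense in $\Omega$ by minimality of $T$; of course $b$ is a topological generator too. It then suffices to find a continuous automorphism $\alpha$ of $\Omega_d$ with $\alpha(a) = b$, and take $\Psi := \alpha\circ\psi$. Here the procyclic structure does the work: since $\Omega_d$ is topologically generated by $b$ it is procyclic, so there is a continuous surjection $\pi : \hat{\Z}\to\Omega_d$ with $\pi(1) = b$, and $\Omega_d \cong \hat{\Z}/I$ with $I := \ker\pi$ a closed subgroup of $\hat{\Z}$. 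Working componentwise in $\hat{\Z} = \prod_p \Z_p$, the topological generator $a$ of $\hat{\Z}/I$ lifts to a unit $u \in \hat{\Z}^{\times}$ (in each factor the image of $a$ topologically generates $\Z_p/I_p$ and hence admits a unit preimage in $\Z_p$). Since every closed subgroup of $\hat{\Z}$ is an ideal — if $\Z\ni n_k \to v$ in $\hat{\Z}$ and $h\in I$, then $n_k h\in I$ and $n_k h\to vh$, so $vh\in I$ — multiplication by $u$ maps $I$ onto $I$ and descends to a continuous automorphism $\bar\mu_u$ of $\hat{\Z}/I = \Omega_d$ with $\bar\mu_u(b) = \bar\mu_u(\pi(1)) = \pi(u) = a$. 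Then $\alpha := \bar\mu_u^{-1}$ has $\alpha(a) = b$, which completes the construction of $\Psi$ and hence of $f$.

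I expect the crux to be exactly this last step — reconciling an arbitrary group isomorphism with the dynamics, i.e.\ the fact that the continuous automorphism group of a procyclic group acts transitively on its topological generators (equivalently, on its minimal translations). Everything else (continuity of $g$ and $\Psi$, and the identity $f(T^n(e)) = d_n$) is routine. If one would rather not invoke $\hat{\Z}$, the same transitivity is transparent from the classification of procyclic groups as products $\prod_p \Z/p^{a_p}$ with $\Z/p^{\infty} := \Z_p$, in which the topological generators are precisely the ``units'' and the automorphism group acts on them simply transitively.
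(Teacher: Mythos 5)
Your proof is correct, and at the top level it follows the same skeleton as the paper's: produce a continuous map $\Omega \to \mathrm{hull}(d)$ carrying $T^n(e)$ to $\sig^n(d)$ and read off the zeroth coordinate. The real difference is how the one non-trivial step is handled. The paper invokes Lemma~\ref{lem:hulliso} to realize $\Omega$ as $\mathrm{hull}(\tilde{F}(e))$, takes an isomorphism $h : \mathrm{hull}(\tilde{F}(e)) \to \mathrm{hull}(d)$ with $h(\tilde{F}(e)) = d$ --- a condition that only pins down the image of the identity --- and then asserts that ``clearly'' $h(\tilde{F}(T^{n_k}(e))) = \sig^{n_k}(d)$. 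That identity requires $h$ to send the topological generator $\sig(\tilde{F}(e))$ to the topological generator $\sig(d)$, which an arbitrary isomorphism of the two groups need not do; this is exactly the point you flag as the crux and actually prove. Your argument --- presenting $\Omega_d$ as $\hat{\Z}/I$ with $I$ a closed (hence, as you check via density of $\Z$ in $\hat{\Z}$, ideal) subgroup, lifting the displaced generator to a unit $u \in \hat{\Z}^{\times}$ componentwise, and correcting the given isomorphism by the automorphism induced by multiplication by $u^{-1}$ --- establishes that the continuous automorphism group of a procyclic group acts transitively on its topological generators, which is precisely the fact the paper's proof implicitly relies on. So your route is more complete and self-contained, at the cost of importing the structure theory of $\hat{\Z}$; the paper's route is shorter and stays inside the hull formalism, but as written leaves the generator-matching (equivalently, the well-definedness of the extension of $f$ from the orbit of $e$ to all of $\Omega$) unjustified. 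The remaining steps of your argument (continuity of evaluation at $0$, $T^n(e)=\om_0^n$, and the reduction to constructing the intertwining isomorphism $\Psi$) are all sound.
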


\begin{proof}
By Lemma \ref{lem:hulliso} we have $\tilde{f} \in C(\Omega, R)$ such
that $\mathrm{hull}(\tilde{F}(e)) \cong \Omega$ (note that
$\tilde{F}(e) = (\tilde{f}(T^n(e)))_{n \in \Z}$). Since
$\mathrm{hull}(d) \cong \Omega,$ we have a continuous isomorphism $h
: \mathrm{hull}(\tilde{F}(e)) \to \mathrm{hull}(d)$ with
$h(\tilde{F}(e)) = d.$

Clearly, for $T^{n_k}(e) \in \Omega$ we have
$h(\tilde{F}(T^{n_k}(e))) = \sigma^{n_k}(d)$ since
$\tilde{F}(T^{n_k}(e)) = \sigma^{n_k}(\tilde{F}(e)).$ If $\lim_{k
\to \infty} T^{n_k}(e) = \omega,$ then $h(\tilde{F}(\om)) =
\lim_{k\to \infty}\sigma^{n_k}(d)$, where the limit exists since $h$
and $\tilde{F}$ are both continuous. Define $f$ by $f(T^n(e)) =
\sigma^n(d)_0 = d_n$. We extend $f$ to the whole $\Omega$ by
$f(\omega) = \lim_{k \to \infty} \sigma^{n_k}(d)_0$ if $\omega =
\lim_{k \to \infty} T^{n_k}(e).$ By the previous analysis, $f$ is
well defined and continuous. So there is an $f \in C(\Omega,\R)$
such that $F(e) = d,$ that is,  $f(T^n(e)) = d_n$ for every $n \in
\Z$.
\end{proof}

We say that $f \in C(\Omega,\R)$ is $p$-periodic with respect to $T$
if $f(T^p(\omega)) = f(\omega)$ for every $\omega \in \Omega$. Then
we have

\begin{proposition} \emph{\cite[Proposition 2.7]{dg2}}
Let $f \in C(\Omega,\R)$. If $f(T^{p+m}(\omega_0)) =
f(T^{m}\omega_0)$ for some $\omega_0 \in \Omega$, some minimal
translation $T : \Omega \to \Omega$ and every $m \in \Z$, then for
every minimal translation $\tilde{T} : \Omega \to \Omega$, $f$ is
$p$-periodic with respect to $\tilde{T}$.
\end{proposition}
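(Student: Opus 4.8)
The plan is to exploit the fact that any two minimal translations of a Cantor group differ by a translation in the group itself, and then transfer the periodicity condition along an orbit to a periodicity condition on all of $\Omega$. Concretely, write $T(\omega)=\omega\cdot\omega_0$ and $\tilde T(\omega)=\omega\cdot\tilde\omega_0$ for the two minimal translations; since $\Omega$ is Abelian, $T^m(\omega)=\omega\cdot\omega_0^m$ and similarly for $\tilde T$. The hypothesis says that the function $g(\omega):=f(\omega\cdot\omega_0^p)-f(\omega)$ vanishes on the orbit $\{T^m(\omega_0)=\omega_0^{m+1}:m\in\Z\}$, equivalently on the set $\{\omega_0^k:k\in\Z\}$. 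The key point is that this orbit is dense in $\Omega$ because $T$ is minimal, so by continuity of $f$ (hence of $g$) we get $g\equiv 0$ on $\Omega$; that is, $f(\omega\cdot\omega_0^p)=f(\omega)$ for \emph{every} $\omega\in\Omega$, i.e. $f$ is $p$-periodic with respect to $T$ itself.

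Next I would upgrade ``$p$-periodic with respect to $T$'' to ``$p$-periodic with respect to every minimal translation $\tilde T$.'' The identity just obtained, $f(\omega\cdot\omega_0^p)=f(\omega)$ for all $\omega$, means exactly that $f$ is invariant under multiplication by the element $\omega_0^p$, and hence under the closed subgroup $\overline{\langle\omega_0^p\rangle}$ it generates. Now I claim $\tilde\omega_0^p$ lies in this subgroup. Indeed, minimality of $\tilde T$ forces $\overline{\langle\tilde\omega_0\rangle}=\Omega$, and minimality of $T$ forces $\overline{\langle\omega_0\rangle}=\Omega$; in a procyclic group an element generates a dense subgroup iff it is a ``topological generator,'' and for such generators $\omega_0,\tilde\omega_0$ one can write $\tilde\omega_0$ as a limit $\tilde\omega_0=\lim_k\omega_0^{a_k}$ for suitable integers $a_k$, whence $\tilde\omega_0^p=\lim_k\omega_0^{p a_k}\in\overline{\langle\omega_0^p\rangle}$. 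Therefore $f(\omega\cdot\tilde\omega_0^p)=f(\omega)$ for all $\omega$, i.e. $f$ is $p$-periodic with respect to $\tilde T$.

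Alternatively, and perhaps more cleanly, once $f(\omega\cdot\omega_0^p)=f(\omega)$ for all $\omega$ I would run the same density argument in reverse against $\tilde T$: fix $\omega\in\Omega$ and look at the continuous function $m\mapsto f(\tilde T^m(\omega))=f(\omega\cdot\tilde\omega_0^m)$. Using $\overline{\langle\omega_0\rangle}=\Omega$ pick integers $a_k$ with $\omega_0^{a_k}\to\tilde\omega_0$; then $f(\omega\cdot\tilde\omega_0^p)=\lim_k f(\omega\cdot\omega_0^{p a_k})=\lim_k f(\omega)=f(\omega)$, since $p$-periodicity under $T$ gives $f(\omega\cdot\omega_0^{p a_k})=f(\omega)$ for each $k$. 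Either way the conclusion follows.

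The routine parts are the manipulations with the Abelian group law and continuity; the one genuine point requiring care—the main obstacle—is the claim that $\tilde\omega_0^p$ lies in (the closure of) the cyclic group generated by $\omega_0^p$, i.e. that the invariance subgroup of $f$ contains $\tilde\omega_0^p$. This is where one must use that $\Omega$ is procyclic (a Cantor group admitting a minimal translation), so that any two topological generators are simultaneously approximable by powers of one another; in a general Cantor group (e.g.\ $\prod_j\Z_2$, which has no minimal translation) the statement would fail, so the hypothesis that both $T$ and $\tilde T$ are minimal is essential and must be invoked precisely at this step.
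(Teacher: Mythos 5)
The paper merely quotes this result from \cite[Proposition 2.7]{dg2} without reproducing a proof, so there is no in-paper argument to compare against; your proof is correct and is essentially the standard one from that reference: density of the $T$-orbit of $\omega_0$ plus continuity of $f$ upgrades the hypothesis to $f(\omega\cdot a^p)=f(\omega)$ for every $\omega$ (where $T(\omega)=\omega\cdot a$), and writing the generator of $\tilde{T}$ as a limit of powers of $a$ transfers the $p$-periodicity to $\tilde{T}$. The only blemish is notational: you reuse the symbol $\omega_0$ both for the base point of the orbit in the hypothesis and for the translation element of $T$, which need not coincide, but this is harmless since minimality makes \emph{every} orbit dense and your argument goes through verbatim with the two roles separated.
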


The above proposition tells us that the periodicity of $f$ is
independent of minimal translations. Next we recall from \cite{a}
how periodic functions in $C(\Omega,\R)$ can be constructed. Given a
compact open subgroup $\Omega_0$ with finite index (such open
subgroups can be found in any neighborhood of the identity element;
see Proposition~\ref{prop:avilahull}) and $f \in C(\Omega,\R)$, we
can define a periodic $f_{\Omega_0} \in C(\Omega,\R)$ by
$$
f_{\Omega_0}(\omega) = \int_{\Omega_0} f(\omega \cdot \tilde \omega) \, d\mu_{\Omega_0}(\tilde \omega),
$$
where $\mu_{\Omega_0}$ denotes the normalized Haar measure on $\Omega_0$. This
shows that the set of periodic functions is dense in
$C(\Omega,\R)$ since $\Omega_0$ can be arbitrarily small. Moreover, there
exists a decreasing sequence of Cantor subgroups $\Omega_k$ with
finite index $n_k$ such that $\bigcap \Omega_k = \{e\}$ (we will see this point explicitly in the next subsection).
Let $P_k$ be the set of functions defined on $\Omega / \Omega_k$, that is, the
elements in $P_k$ are $n_k$-periodic potentials. Denote by $P$
the set of all periodic functions. Then, we have $P_{k}
\subset P_{k+1}$  and $P =
\bigcup P_k$.

\subsection{Characterization of the hull of a limit-periodic potential}

Given a limit-periodic potential $d \in \ell^{\infty}(\Z)$ with an
infinite frequency integer set $S_d = \{n_j \},$ we see that
$\mathrm{hull}(d)$ is a Cantor group admitting a minimal
translation. Consider the directed set $I = \Z^{+}$ with the usual
order. This gives rise to an inverse system $(\Z_{n_i},\pi_{ij})_{j
\ge i},$ where $\Z_{n_j}$ are $n_j$-cyclic groups with the discrete
topology and $\pi_{ij}$ is a homomorphism defined by $\pi_{ij} (k +
n_j \Z) = k + n_i \Z,\ k \in \Z .$

We endow $\Z_{n_j}$ with a discrete metric defined by
$\mathrm{dist}_j(a_1,a_2) = 0$ when $a_1 = a_2$ and
$\mathrm{dist}_j(a_1,a_2) = 1$ when $a_1 \neq a_2$. Consider the
product group $$A = \prod^{\infty}_{j=1} \Z_{n_j},$$ of which the
topology is the product topology. We endow $A$ with a metric defined
by \be \label{metric} \mathrm{dist}(x,y) = \sum^{\infty}_{j = 1}
\frac{1}{2^j} \frac{\mathrm{dist}_j(x_j,y_j)}{1 +
\mathrm{dist}_j(x_j,y_j)}, \quad x,\ y \in A, \ee which is
compatible with the product topology.

Let $E=(1,1,\cdots,1,\cdots) \in A$, and consider the closed
subgroup
$$\bar{B} = \overline{\{ nE=(n,n,\cdots,n,\cdots) \in A : n \in \Z
\}}.$$ Obviously, $\bar{B}$ is a Cantor group with a minimal
translation $T(x) = x + E$, and $\vec{0} \in \bar{B}$ is the
identity element. Let
$$\tilde{d}_k = \mathrm{dist}(kE, \vec{0}) = \sum^{\infty}_{j = 1} \frac{1}{2^j} \frac{\mathrm{dist}_j(k,0)}{1 + \mathrm{dist}_j(k,0)}.$$
By the proof of Lemma \ref{lem:hulliso}, we know that $\tilde{d} =
(\tilde{d}_k)_{k \in \Z}$ is limit-periodic and
$$\mathrm{hull}(\tilde{d}) \cong \bar{B}.$$ Let $p_j(k) = \frac{1}{2^j} \frac{\mathrm{dist}_j(k,0)}{1 + \mathrm{dist}_j(k,0)}$.
Then we have  $$\tilde{d}(k) = \sum^{\infty}_{j = 1} p_j(k),$$ which
tells us that one of $\tilde{d}$'s frequency integer sets  is
$S_d = \{ n_j \}$. By Theorem \ref{thm:subfrequency}, we conclude
that
$$\mathrm{hull}(d) \cong \mathrm{hull}(\tilde{d}) \cong \bar{B}.$$

Let $\bar{B}_k = \overline{\{nn_kE : n \in \Z \}} \subset \bar{B}$,
and it is easy to  see that there exists a decreasing sequence of Cantor subgroups
$\bar{B}_k$ with the index $n_k$ and $\bigcap \bar{B}_k = \{ \vec{0}
\}$.

\begin{proposition}
$b = (k, k, \cdots, k, \cdots)$ is a generator in $\bar{B}$, that is, $\{ nb : n \in \Z \}$ is dense in $\bar{B},$
if and only if, for any $n_j$, $k$ and $n_j$ have no common divisors.
\end{proposition}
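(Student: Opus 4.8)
The plan is to reduce the claim to a statement about when a single element generates a dense subgroup in each finite quotient $\Z_{n_j}$ and then combine these conditions using the description of $\bar B$ as an inverse limit. First I would observe that $\bar B$ sits inside $A = \prod_j \Z_{n_j}$ and that, by the very definition of the product topology, a subgroup $S \subset \bar B$ is dense in $\bar B$ if and only if its image under every coordinate projection $\pi_j : A \to \Z_{n_j}$ is all of $\Z_{n_j}$ — here one uses that $\bar B$ is the closure of $\{nE : n \in \Z\}$, so $\pi_j(\bar B) = \Z_{n_j}$ for each $j$ (since $\pi_j(nE) = n \bmod n_j$ runs over all of $\Z_{n_j}$), and that a cylinder neighborhood basis in the product topology means density is tested coordinatewise together with the compatibility constraints. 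A cleaner route: identify $\bar B$ with $\varprojlim \Z_{n_j}$ via Proposition~\ref{prop:inverselimit}(3) (this is exactly the defining relation, since $\bar B$ is the closure of the diagonal copy of $\Z$ and the $n_j$ are linearly ordered by divisibility, so $\pi_{ij}\pi_j = \pi_i$ holds on $\bar B$), and then $\{nb : n \in \Z\}$ is dense in $\varprojlim \Z_{n_j}$ iff for every $j$ the reduction $k \bmod n_j$ generates $\Z_{n_j}$.

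The key elementary number-theoretic fact to invoke is that $k \bmod n_j$ generates the cyclic group $\Z_{n_j}$ if and only if $\gcd(k, n_j) = 1$. So the proposed equivalence becomes: $\{nb:n\in\Z\}$ is dense in $\bar B$ $\iff$ for every $j$, $\gcd(k,n_j)=1$ $\iff$ $k$ and $n_j$ have no common divisor for any $n_j$, which is precisely the statement. I would therefore organize the proof as two implications. For the forward direction, suppose $b$ is a generator; restricting the dense set to the $j$-th coordinate (using that $\pi_j$ is a continuous surjective homomorphism and $\Z_{n_j}$ is discrete, hence the image of a dense set is dense, hence all of $\Z_{n_j}$) forces $k \bmod n_j$ to generate $\Z_{n_j}$, so $\gcd(k,n_j)=1$. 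For the converse, suppose $\gcd(k,n_j)=1$ for all $j$; then in each finite quotient the element $kE$ maps onto a generator, and I would show directly that $\{nb : n\in\Z\}$ meets every basic open set of $\bar B$: a basic open set of $\bar B$ has the form $\bar B \cap U$ where $U$ fixes finitely many coordinates $j \le N$, but because the $n_j$ are nested ($n_i \mid n_j$ for $i\le j$), fixing the coordinates up to $N$ is controlled by a single congruence modulo $n_N$, and since $\gcd(k,n_N)=1$ we can solve $nk \equiv (\text{target}) \pmod{n_N}$, producing the required $n$.

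The main obstacle — really the only nontrivial point — is justifying that density in $\bar B$ can be tested "finitely," i.e., that a basic open neighborhood of a point of $\bar B$ is determined by congruence conditions modulo a single $n_N$ rather than by an uncontrolled infinite product condition. This is where the hypothesis $n_j \mid n_{j+1}$ is essential: it guarantees that the partial products / the compatibility relations collapse, so that the nested subgroups $\bar B_k = \overline{\{n n_k E\}}$ form a neighborhood basis of $\vec 0$ with $\bar B / \bar B_k \cong \Z_{n_k}$, and every open set is a union of cosets of some $\bar B_N$. Once that structural fact is in hand (it is already essentially recorded in the excerpt, just before the Proposition), the argument is the routine cyclic-group computation above. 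I would also remark that the "only if" direction can alternatively be phrased via the dual group: $b$ is a generator iff no nontrivial character of $\bar B$ kills $b$, and the characters of $\bar B$ are exactly the $e^{2\pi i m/n_j}$, so $b$ fails to generate precisely when $e^{2\pi i m k/n_j}=1$ for some $j$ and some $m \not\equiv 0$, i.e. when $k$ shares a factor with some $n_j$ — but the inverse-limit/coordinatewise argument is the most transparent and is the one I would write out.
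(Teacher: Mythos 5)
Your proof is correct and is essentially the paper's argument recast in inverse-limit language: both reduce the claim to the fact that $k$ generates $\Z_{n_j}$ if and only if $\gcd(k,n_j)=1$ and use the nesting $n_i \mid n_j$ to make the coordinatewise conditions compatible, the paper doing this with the explicit metric (a lower bound $\inf_{n}\|nb-E\|\ge 2^{-(t+1)}$ for the forward direction, and a sequence $q_j$ with $q_jk\equiv 1 \pmod{n_i}$ for all $i\le j$ so that $q_jb\to E$ for the converse). Your converse is marginally more self-contained, since you meet every basic open subset of $\bar{B}$ directly, whereas the paper only exhibits $E$ as a limit of multiples of $b$ and leaves implicit that a closed subgroup containing $E$ must contain all of $\bar{B}$.
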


\begin{proof}
If there exists some $n_t$ such that $(k, n_t) = k_t > 1$, then $k$
cannot be a generator for $Z_{n_t}$, i.e. $ n k \neq 1
(\mathrm{mod}\ n_t)$ for any $n \in \Z$, and then $$\inf_{n \in \Z}
\p n b - E \p \ge \frac{1}{2^t} \frac{\mathrm{dist}_t(nk,1)}{1 +
\mathrm{dist}_t(nk,1)} = \frac{1}{2^{t+1}},$$ where
$E=(1,1,\cdots,1,\cdots)$. So $b$ is not a generator.

Conversely, if for any $n_j$, $k$ and $n_j$ have no common divisors,
i.e. $(k,n_j) = 1$, we will show that there exists a positive
integer sequence $\{ q_j \}_{j \in \Z^{+}}$ such that $\lim_{j \to
\infty} q_j b = E$ in the norm sense.  Since $(k,n_1) = 1,$ there
exists some positive integer $q_1$ such that $ q_1 k = 1\
(\mathrm{mod}\ n_1).$ Similarly, since $(k,n_2) = 1,$ we have $ q_2
k = 1\ (\mathrm{mod}\ n_2).$ Since $n_1 | n_2$, we still have $ q_2
k = 1\ (\mathrm{mod}\ n_1)$. By induction, we get a sequence $\{ q_j
\}_{j \in \Z^{+}}$ such that $q_j k = 1 \ (\mathrm{mod}\ n_i)$ when
$i \leq j$. It is easy to see that $\lim_{j \to \infty} q_j b = E$
in the norm sense, and so $b$ is a generator.
\end{proof}

\begin{proposition}
Let $T : \bar{B} \to \bar{B}$,  $ x \to  b + x$. $T$ is minimal if and only if $b$ is a generator.
\end{proposition}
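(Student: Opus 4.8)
The plan is to reduce the statement to the elementary observation that, in a topological group, left translation by a fixed element is a homeomorphism, together with the identification of the orbit of the identity $\vec{0}$ under $T$ with the cyclic set $\{nb : n \in \Z\}$. Indeed, this proposition is just the concrete instance, for $\bar B$, of the general principle that a translation on a compact abelian topological group is minimal precisely when the translating element generates a dense subgroup.

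First I would dispose of the easy direction. If $T$ is minimal, then by Definition~2.1 the orbit $\{T^{n}(\vec 0) : n \in \Z\}$ of the identity is dense in $\bar B$. Since $T^{n}(\vec 0) = \vec 0 + nb = nb$, that orbit is exactly $\{nb : n \in \Z\}$, which is therefore dense; that is, $b$ is a generator of $\bar B$ in the sense defined above.

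For the converse, assume $b$ is a generator, so that $D := \{nb : n \in \Z\}$ is dense in $\bar B$. Fix an arbitrary $x \in \bar B$. Its orbit under $T$ is
$$\{T^{n}(x) : n \in \Z\} = \{x + nb : n \in \Z\} = x + D.$$
Because $\bar B$ is a topological group, the map $y \mapsto x + y$ is a homeomorphism of $\bar B$ onto itself, hence carries the dense set $D$ to a dense set; thus $x + D$ is dense in $\bar B$, i.e. the orbit of $x$ is dense. As $x$ was arbitrary, $T$ is minimal.

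I do not expect any genuine obstacle here; the only point that requires a moment's care is that minimality is a condition on the orbit of \emph{every} point, not merely of the identity, which is exactly why the homeomorphism (translation-invariance) argument is needed for the converse and why it is essential that $\bar B$ carries a compatible topological group structure (which it does, being a closed subgroup of the Cantor group $A$).
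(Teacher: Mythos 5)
Your proof is correct and follows essentially the same route as the paper: the easy direction is the observation that the orbit of the identity is exactly $\{nb : n \in \Z\}$, and the converse reduces density of the orbit of an arbitrary $x$ to density of $\{nb\}$ via translation by $x$. If anything, your version is slightly cleaner and more complete, since the paper's proof only exhibits sequences with $q_j b + x \to E$ (i.e., shows that the single point $E$ lies in every orbit closure), whereas your homeomorphism argument shows directly that every orbit is dense.
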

\begin{proof}
If $b$ is not a generator, clearly $T$ is not minimal. On the other
hand, if $b$ is a generator, for each $x \in \bar{B}$, there exists
a corresponding sequence $\{ q_j \}_{j \in \Z^{+}}$ such that
$\lim_{j \to \infty } q_j b = E - x$ ($E$ is the same as before),
and so $\lim_{j \to \infty } (q_j b + x) = E$, which implies
$\lim_{j \to \infty} T^{q_j}(x) = E$. It follows  that $T$ is
minimal.
\end{proof}

Furthermore, since the following diagram

\begin{diagram}
&             &\bar{B} &                &\\
&\ldTo^{\pi_j}&        &\rdTo^{\pi_i}   &\\
\Z_{n_j}      &        &\rTo^{\pi_{ij}} &      & \Z_{n_i}
\end{diagram}
is commutative where $\pi_i$ is the $i$-th coordinate projection,
$(\bar{B},\pi_i)$ is compatible with this inverse system
$(\Z_{n_i},\pi_{ij})_{j \ge i}$. Proposition \ref{prop:inverselimit}
ensures that $(\bar{B},\pi_i)$ is also the inverse limit of this
system (see the statement (3) of Proposition
\ref{prop:inverselimit}). As already introduced, we call $\bar{B}$ a
procyclic group, that is, an inverse limit of finite cyclic groups.
Equivalently, a procyclic group is a profinite group that can be
generated by one element.  Thus, we have proved that \textit{a
Cantor group with a minimal translation is a procyclic group}. Also,
equivalently, we have the following.

\begin{theorem}
For any limit-periodic potential $d \in \ell^{\infty}(\Z)$, $\mathrm{hull}(d)$ is a procyclic group.
\end{theorem}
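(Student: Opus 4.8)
The plan is to reduce the claim to the case already handled in the excerpt, namely the ``model'' procyclic group $\bar B$ built from a frequency integer set. First I would dispose of the trivial case: if $d$ is periodic, then $\mathrm{orb}(d)$ is finite, so $\mathrm{hull}(d)$ is a finite cyclic group (generated by the image of $1$ under $\phi$ from Proposition~\ref{prop:avilahull}), and a finite cyclic group is trivially procyclic (it is its own inverse limit over a cofinal chain of quotients). So from now on assume $d$ is limit-periodic but not periodic.

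Next I would produce a frequency integer set for $d$. By the discussion following Proposition~\ref{prop:limitform} — and Propositions~\ref{prop:generator}, \ref{prop:commondivisor}, \ref{prop:limitform} — since $d$ is limit-periodic and not periodic, its frequency module $F_d$ is countable and infinitely generated, and one can extract an infinite positive integer set $S_d = \{n_j\}$ with $n_j \mid n_{j+1}$ such that $F_d$ is the $\Z$-module generated by $\{2\pi/n_j : n_j \in S_d\}$. This is exactly the setting of the subsection ``Characterization of the hull of a limit-periodic potential.''

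Then I would invoke the construction already carried out there. Form $A = \prod_{j\ge 1} \Z_{n_j}$ with the metric \eqref{metric}, let $E = (1,1,\dots)$, and let $\bar B = \overline{\{ nE : n \in \Z\}} \subset A$. The excerpt shows, using the proof of Lemma~\ref{lem:hulliso} applied to the continuous function $\tilde d_k = \mathrm{dist}(kE,\vec 0)$, that $\tilde d = (\tilde d_k)_{k\in\Z}$ is limit-periodic with $\mathrm{hull}(\tilde d) \cong \bar B$, and that $S_d = \{n_j\}$ is a frequency integer set of $\tilde d$ as well (because $\tilde d(k) = \sum_j p_j(k)$ with $p_j$ being $n_j$-periodic). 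Since $d$ and $\tilde d$ share the infinite frequency integer set $S_d$, Theorem~\ref{thm:subfrequency} (with $S_{\tilde d} = S_d \subset S_d$, or simply the equal-frequency-module case) gives $\Omega_d \cong \Omega_{\tilde d} \cong \bar B$. Finally, the excerpt checks that the coordinate projections $\pi_i : \bar B \to \Z_{n_i}$ form a compatible family and, via statement~(3) of Proposition~\ref{prop:inverselimit}, that $(\bar B, \pi_i)$ is the inverse limit of the inverse system $(\Z_{n_i}, \pi_{ij})_{j\ge i}$ of finite cyclic groups — i.e.\ $\bar B$ is procyclic. Since being procyclic is preserved under isomorphism of topological groups, $\mathrm{hull}(d) \cong \bar B$ is procyclic, completing the proof.

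The only point that requires care — the ``main obstacle,'' though it is already resolved in the text — is verifying that $(\bar B, \pi_i)$ genuinely realizes the inverse limit rather than merely being compatible with the system: one must check that $\bar B$ coincides with $\{ c \in A : \pi_{ij}\pi_j(c) = \pi_i(c) \text{ for } j\ge i\}$, i.e.\ that the diagonal closure $\overline{\{nE\}}$ fills up the entire compatible-threads subgroup. This is where the specific divisibility structure $n_i \mid n_j$ is used, and it is exactly the content supplied by the two Propositions on generators and on minimality of $T(x) = x + E$ in the excerpt; invoking Proposition~\ref{prop:inverselimit}(3) together with those facts closes the gap.
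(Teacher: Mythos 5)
Your proposal is correct and follows essentially the same route as the paper: extract an infinite frequency integer set, build the model group $\bar{B} = \overline{\{nE\}} \subset \prod_j \Z_{n_j}$, identify $\mathrm{hull}(d) \cong \bar{B}$ via the explicit potential $\tilde{d}_k = \mathrm{dist}(kE,\vec{0})$ and Theorem \ref{thm:subfrequency}, and then recognize $\bar{B}$ as the inverse limit of the cyclic groups $\Z_{n_j}$ via Proposition \ref{prop:inverselimit}(3). Your explicit treatment of the periodic case and your flagging of the need to check that $\bar{B}$ actually equals the full subgroup of compatible threads are both points the paper passes over quickly, but they are handled correctly.
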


By the above analysis and Proposition \ref{lem:subinverselimit},
Theorem \ref{thm:subfrequency} follows directly. There is also a
classification theorem about procyclic groups, which is essentially
the same as Theorem \ref{thm:classification}. Let $n = \prod_{p}
p^{n(p)}$ be a supernatural number where $p$ goes through all the
primes with $0 \leq n(p) \leq \infty$ (one can consider this
expression as the generalized prime factorization, and we call it
$supernatural$ since it is an extension of natural numbers to
infinities that differ by different factorizations).
\begin{theorem} \emph{\cite[Theorem 2.7.2]{rz}}
There exists a unique procyclic group $G$ of order $n$ up to isomorphism.
\end{theorem}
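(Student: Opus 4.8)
Here is the approach I would take. The \emph{order} of a profinite group $H$ is the supernatural number $|H|=\mathrm{lcm}\{\,|H/N| : N\lhd_{O} H\,\}$, with the least common multiple formed in the lattice of supernatural numbers; this is visibly an isomorphism invariant, so it suffices to construct one procyclic group of order $n$ and to show that any procyclic group of order $n$ is isomorphic to it. The guiding idea is to realize every procyclic group as a quotient of the ``universal'' procyclic group $\widehat{\Z}=\varprojlim_{m}\Z/m\Z\cong\prod_{p}\Z_{p}$ (the isomorphism by the Chinese Remainder Theorem), and then to read off all possibilities from the closed subgroups of $\widehat{\Z}$.

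\emph{Existence.} Given $n=\prod_{p}p^{n(p)}$, set $G_{p}=\Z/p^{n(p)}\Z$ when $n(p)<\infty$ and $G_{p}=\Z_{p}$ when $n(p)=\infty$, and let $G_{n}=\prod_{p}G_{p}$ with the product topology. First I would verify that $G_{n}$ is procyclic: the diagonal element $e=(1,1,\dots)$ is a topological generator, which is the same prime-by-prime density computation (via the Chinese Remainder Theorem) that underlies the generator criterion for $\bar{B}$ established in Section~4; alternatively, $G_{n}$ is procyclic simply because it is profinite and topologically cyclic. Then I would compute $|G_{n}|$: every finite continuous quotient of $G_{n}$ has open kernel, hence factors through a finite sub-product $\prod_{p\in S}\Z/p^{j_{p}}\Z$ with $S$ finite and $j_{p}\le n(p)$, so its order divides $n$; conversely, for each prime $p$ and each finite exponent $k\le n(p)$ the quotient $\Z/p^{k}\Z$ is realized by a coordinate projection. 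Hence $|G_{n}|=n$.

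\emph{Uniqueness.} Let $G=\overline{\langle g\rangle}$ be any procyclic group of order $n$. The homomorphism $\Z\to G$, $1\mapsto g$, is continuous for the profinite topology on $\Z$ (preimages of open subgroups of $G$ are finite-index subgroups of $\Z$), so by density of $\Z$ in $\widehat{\Z}$ and completeness of $G$ it extends to a continuous homomorphism $\psi:\widehat{\Z}\to G$; its image is dense and therefore, since $\widehat{\Z}$ is compact and $G$ Hausdorff, all of $G$, so $G\cong\widehat{\Z}/K$ for a closed subgroup $K\le\widehat{\Z}$. Under $\widehat{\Z}\cong\prod_{p}\Z_{p}$ one has $K=\prod_{p}K_{p}$ with each $K_{p}\le\Z_{p}$ closed, and $\widehat{\Z}/K\cong\prod_{p}\Z_{p}/K_{p}$. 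The structural input I would now invoke is the classification of closed subgroups of $\Z_{p}$: a closed additive subgroup of $\Z_{p}$ is automatically a $\Z_{p}$-submodule (approximate $p$-adic coefficients by integers and use closedness), hence an ideal of the valuation ring $\Z_{p}$, hence $p^{k}\Z_{p}$ for a unique $k\in\{0,1,2,\dots\}\cup\{\infty\}$ (with $p^{\infty}\Z_{p}:=0$). Therefore $G\cong\prod_{p}\Z/p^{k_{p}}\Z$, and recomputing the order as above forces $\prod_{p}p^{k_{p}}=n$, i.e.\ $k_{p}=n(p)$ for every $p$. Hence $G\cong G_{n}$.

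The part I expect to require the most care --- rather than being genuinely difficult --- is establishing the two structural facts about $\widehat{\Z}$: that the decomposition $\widehat{\Z}\cong\prod_{p}\Z_{p}$ is compatible with passing to closed subgroups and quotients (so that $K=\prod_{p}K_{p}$ and $\widehat{\Z}/K\cong\prod_{p}\Z_{p}/K_{p}$), and that the closed subgroups of $\Z_{p}$ are precisely the $p^{k}\Z_{p}$. Granted these, the surjection $\widehat{\Z}\twoheadrightarrow G$, the topological-generator check for $G_{n}$, and the supernatural-order bookkeeping are all routine given the inverse-limit machinery developed in Sections~3 and~4.
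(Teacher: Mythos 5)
Your argument is correct, but be aware that the paper offers no proof of this statement at all: it is quoted directly from Ribes--Zalesskii \cite{rz}, so there is no in-paper argument to compare against. Your route --- realize an arbitrary procyclic group as a continuous quotient of $\widehat{\Z}\cong\prod_p \Z_p$, classify the closed subgroups of $\Z_p$ as the ideals $p^k\Z_p$, and match supernatural orders --- is essentially the standard textbook proof (and close to the one in \cite{rz}). The two points you flag as needing care are indeed the only nontrivial inputs, and both go through: the splitting $K=\prod_p K_p$ follows from the Sylow decomposition of the profinite abelian group $K$ together with the fact that a pro-$p$ subgroup of $\Z_q$ with $q\neq p$ is trivial, and the submodule claim follows by approximating $p$-adic scalars by integers and using closedness. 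One step you elide is the equivalence between ``inverse limit of finite cyclic groups'' (the paper's definition of procyclic) and ``topologically generated by one element'' (your starting point $G=\overline{\langle g\rangle}$); the paper asserts this equivalence in Section~4 without proof, and it requires a short compactness argument (for each open normal $N$ the set of $g$ whose image generates $G/N$ is closed and nonempty, and these sets have the finite intersection property). Finally, note that the paper's own, essentially equivalent, classification --- Theorem \ref{thm:classification} on hulls of limit-periodic potentials --- is proved by a genuinely different mechanism, namely Pontryagin duality: the hull is determined by its dual, the frequency module, a countable subgroup of the circle. Your structural approach has the advantage of producing the explicit model $\prod_p G_p$, while the duality approach is what ties the supernatural order to the frequency integer sets actually used throughout the paper.
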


\begin{remark}
(1). If $n= p^{n(p)}$, where $p$ is a prime and $n(p) = \infty$,
then the associated procyclic group is the group of $p$-adic
integers (see p.26 of \cite{rz}).\\[0.5mm]
(2). Given a limit-periodic potential $d \in \ell^\infty(\Z)$,
$\mathrm{hull}(d)$ has order $n$. If $S_d = \{ n_j \}$ is a
frequency integer set of $d$, then one must have $\lim_{j \to
\infty} n_j = n$ (here ``$=$" means that they have the same
 generalized prime factorization).
\end{remark}

At the end of this section, we discuss quotient groups of a
procyclic group. The class of cyclic groups is closed with respect
to quotients, that is, a quotient group of a cyclic group is still
cyclic. By Proposition \ref{prop:quotient}, we know that a quotient
group of a procyclic group is still procyclic. We have

\begin{proposition}
Given a procyclic group $G$ with order $n = \prod_{j \in \Z^+}
p^{r_j}_j$ where $p_j$ are primes with $0 < r_j \leq \infty$,
quotient groups of $G$ are procyclic groups with order $m = \prod_{j
\in \Z^+} p^{a_j}_j$ where $0 \leq a_j \leq r_j$, and vice versa.
\end{proposition}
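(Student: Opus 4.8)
\medskip
\noindent\textbf{Proof proposal.} The plan is to work entirely with the explicit inverse-limit presentation of a procyclic group and with Proposition~\ref{prop:quotient}. By the construction earlier in this subsection together with the uniqueness of a procyclic group of a given supernatural order, we may write $G \cong {\lim_{\longleftarrow}}_k \Z_{n_k}$ for an inverse system $(\Z_{n_k},\pi_{ik})_{k\ge i}$ of finite cyclic groups with $n_k \mid n_{k+1}$ and $n = \lim_k n_k$ (equality of generalized prime factorizations, i.e. $v_p(n)=\sup_k v_p(n_k)$ for every prime $p$, where $v_p$ is the $p$-adic valuation); let $\pi_k : G \to \Z_{n_k}$ be the (surjective) projections. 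Since a procyclic group is Abelian, every closed subgroup of $G$ is normal, so ``quotient group of $G$'' means $G/K$ with $K\le G$ closed. The class of finite cyclic groups being closed under quotients, Proposition~\ref{prop:quotient}(2) already shows each $G/K$ is procyclic; what remains is to pin down its supernatural order.

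For the forward direction, fix a closed subgroup $K\le G$ and set $N_k=\ker\pi_k$. The $N_k$ form a decreasing filter base of open normal subgroups with $\bigcap_k N_k = 1$ (as $G$ is profinite), so Proposition~\ref{prop:quotient}(1) gives $G/K \cong {\lim_{\longleftarrow}}_k\, G/KN_k$. Each $G/KN_k$ is a quotient of $G/N_k\cong\Z_{n_k}$, hence cyclic of some order $m_k\mid n_k$, and since $N_{k+1}\subseteq N_k$ the bonding map $G/KN_{k+1}\to G/KN_k$ is onto, so $m_k\mid m_{k+1}$. Therefore $G/K$ is the inverse limit of this chain of finite cyclic groups, so its order is the supernatural number $m$ with $v_p(m)=\sup_k v_p(m_k)$ for each $p$. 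From $m_k\mid n_k$ for all $k$ we get $\sup_k v_p(m_k)\le\sup_k v_p(n_k)=v_p(n)$, i.e. $m\mid n$, which is exactly the condition $a_j\le r_j$. (Alternatively, invoke the structure theorem $G\cong\prod_p G_p$ with $G_p=\Z/p^{r_p}\Z$ or $\Z_p$, note that closed subgroups and quotients respect this decomposition, and reduce to the elementary description of the subgroups of $\Z/p^r\Z$ and of $\Z_p$.)

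For the converse, let $m=\prod_j p_j^{a_j}$ with $0\le a_j\le r_j$ be given, and put $m_k:=\gcd(n_k,m)$, i.e. the largest positive integer $d$ with $d\mid n_k$ and $v_p(d)\le v_p(m)$ for every prime $p$. Then $m_k\mid n_k$; since $v_p(n_k)$ is nondecreasing in $k$ so is $v_p(m_k)=\min(v_p(n_k),v_p(m))$, whence $m_k\mid m_{k+1}$; and $\sup_k v_p(m_k)=\min(v_p(n),v_p(m))=v_p(m)$ because $m\mid n$, so $\lim_k m_k=m$. For $i\le k$ let $\rho_{ik}:\Z_{m_k}\to\Z_{m_i}$ and $r_k:\Z_{n_k}\to\Z_{m_k}$ be the reduction maps (well defined as $m_i\mid m_k\mid n_k$); the family $(r_k)$ is a morphism of inverse systems since every composite around the relevant square is reduction modulo $m_i$. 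By the universal property of the inverse limit, $(r_k\circ\pi_k:G\to\Z_{m_k})$ induces a continuous homomorphism $\psi:G\to H:={\lim_{\longleftarrow}}_k\Z_{m_k}$, and $\psi$ is surjective because each $r_k$ is onto. Taking $K=\ker\psi$, a closed subgroup of $G$, we obtain $G/K\cong H$, a procyclic group of order $\lim_k m_k=m$. Hence the quotients of $G$ are exactly the procyclic groups of order $\prod_j p_j^{a_j}$ with $0\le a_j\le r_j$, as claimed.

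The only step requiring genuine care, and the main obstacle, is the exactness input: an inverse limit over a countable directed set of surjections of finite groups is again surjective (a compactness / König's-lemma argument on the nonempty finite fibers), together with the bookkeeping that realizes a prescribed supernatural divisor $m$ of $n$ as a monotone limit $\lim_k m_k$ with each $m_k\mid n_k$. Everything else is a routine unwinding of the inverse-limit presentation of $G$ and of Proposition~\ref{prop:quotient}.
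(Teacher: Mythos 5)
Your argument is correct and follows essentially the same route as the paper: present $G$ as ${\lim_{\longleftarrow}}_k \Z_{n_k}$, apply Proposition~\ref{prop:quotient}(1) with the kernels of the projections (the paper's subgroups $\bar{B}_k$) to get $G/K \cong {\lim_{\longleftarrow}}_k G/K\bar{B}_k$ with $m_k \mid n_k$ for the forward direction, and for the converse build a continuous surjection onto ${\lim_{\longleftarrow}}_k \Z_{m_k}$ for a suitable chain $m_k \mid n_k$ with $\lim_k m_k = m$. You merely spell out details the paper leaves implicit (the explicit choice $m_k=\gcd(n_k,m)$ and the surjectivity of the induced map).
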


\begin{proof}
Obviously, $G \cong \lim_{ \leftarrow k} Z_{n_k}$, where $n_k |
n_{k+1}$ and $\lim_{k \to \infty} n_k = n.$ Write $\bar{B} =
\overline{\{n E : n \in \Z \}}$, where $E = (1,1,\cdots,1,\cdots)
\in \prod_{k } Z_{n_k}.$ Then, $\bar{B} \cong \lim_{ \leftarrow k}
Z_{n_k}.$ It is sufficient to consider $\bar{B}$.

Write $\bar{B}_k = \overline{\{n n_k E : n \in \Z \}}$. Clearly, $\{
\bar{B}_k \}$ is a decreasing sequence of open (also closed)
subgroups of $\bar{B}$ with index $n_k$ and $\bigcap \bar{B}_k = \{
\vec{0} \}$. By Proposition \ref{prop:quotient},  for every closed
subgroup $N \subset \bar{B}$ (every subgroup is normal in an Abelian
group), we have $\bar{B}/N \cong \lim_{ \leftarrow k}
\bar{B}/\bar{B}_kN .$ Since $\bar{B}/\bar{B}_k$ is an $n_k$-cyclic
group, $\bar{B}/\bar{B}_kN$ is an $m_k$-cyclic group with $m_k |
n_k$. $B/N$ is a procyclic group with order $m = \lim_{k \to \infty}
m_k$. Since $m_k | n_k$, it follows that $m = \prod_{j \in \Z^+}
p^{a_j}_j$ where $0 \leq a_j \leq r_j$.

Conversely, if $m = \prod_{j \in \Z^+} p^{a_j}_j$ where $0 \leq a_j
\leq r_j$, obviously there exist $m_k$ such that $m_k | n_k$ and
$\lim_{k \to \infty} m_k = m$. Let $\tilde{B} = \lim_{ \leftarrow k}
Z_{m_k}$, and define $\phi : \bar{B} \to \tilde{B},$ $\phi(E) =
\tilde{E}$, where $\tilde{E} = (1,1,\cdots,1,\cdots) \in \prod_{k}
Z_{m_k}$. Metrics on $\tilde{B}$ and $\bar{B}$ are the metric like
(\ref{metric}). It is easy to see that $\phi$ is a continuous
surjective homomorphism. So $\tilde{B}$ is a quotient group of
$\bar{B}$.
\end{proof}

\begin{remark}
By the above proposition, it is easy to see that there exists a 
universal procyclic group with order $n = \prod_p
p^{\infty}$, where $p$ goes though all the primes, in the sense that any procyclic group is a quotient group of this group.
\end{remark}


\vspace*{0.5cm}
\setcounter{equation}{0}

\section{Applications in the Schr\"odinger Operators}
In this section, we will review some recent  results on
the limit-periodic Schr\"odinger operators obtained in this context.
First, let us describe the  model. We consider
Schr\"odinger operators $H^\omega_{f,T}$ acting in $\ell^2(\Z)$ with
dynamically defined potentials $V^\omega_{f,T}$ given by
\begin{equation}\label{equ:oper}
[H^\omega_{f,T} u](n) = u(n+1) + u(n-1) + V^\omega_{f,T}(n) u(n),
\end{equation}
where
\begin{equation}\label{equ:pot}
V^\omega_{f,T} (n) = f(T^n (\omega)) , \quad \omega \in \Omega, \; n \in
\Z
\end{equation}
with a homeomorphism $T$ of a compact space $\Omega$ and a
continuous sampling function $f : \Omega \to \R$.

We have seen that once $\Omega$ is a Cantor group and $T$ a minimal
translation, $V^\omega_{f,T} (n) = f(T^n (\omega))$ is a
limit-periodic potential. Clearly,  $T$ is ergodic with respect to
$\mu$,  the normalized Haar measure on $\Omega$. The
ability to fix the base dynamics and independently vary the sampling
functions is very useful in constructing examples of Schr\"odinger
operators with  desired spectral features.

\begin{theorem} \emph{\cite[Theorem 1.1]{a}}
Suppose $\Omega$ is a Cantor group and $T$ a minimal translation.
For a dense set of $f \in C(\Omega,\R)$ and every $\lambda \neq 0$,
the spectrum of $H^\omega_{\lambda f,T}$ has zero Lebesgue measure,
and the Lyapunov exponent is a continuous positive function of the
energy. 
\end{theorem}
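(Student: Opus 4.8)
\medskip

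\noindent\textbf{Proof proposal.} This is Avila's theorem quoted above; we sketch the architecture of its proof. Since the conclusion is only asserted for a dense set of $f$, it suffices to construct, arbitrarily close to any given $f_0 \in C(\Omega,\R)$, a function $f$ for which it holds; and since the periodic functions are dense in $C(\Omega,\R)$ (shown above via the averages $f_{\Omega_0}$), we may assume $f_0$ is periodic. We build $f$ as a uniform limit $f = \lim_k f_k$ of periodic functions with periods $n_k \mid n_{k+1}$, chosen inductively with $\sum_k \|f_{k+1}-f_k\|_\infty$ as small as we please. By minimality of $T$ the spectrum $\sigma(H^\omega_{\lambda f_k,T})$ and the Lyapunov exponent of $H^\omega_{\lambda f_k,T}$ are independent of $\omega$, and after rescaling the potential we may fix $\lambda = 1$.

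The inductive step rests on the classical theory of periodic operators. Writing $H_V$ for the operator on $\ell^2(\Z)$ with a potential $V$ of period $n$ and $\Delta_n$ for its discriminant (the trace of the period-$n$ transfer matrix, as a function of the energy), one has $\sigma(H_V) = \{E : |\Delta_n(E)| \le 2\}$, a union of at most $n$ closed bands, with $L_V$ vanishing on $\sigma(H_V)$ and $L_V(E) = \tfrac1n \operatorname{arccosh}\bigl(|\Delta_n(E)|/2\bigr) > 0$ off it. The core is a quantitative gap-opening lemma: given a periodic $V$ of period $n$ with all gaps open and a small $\eps > 0$, there is a periodic $W$ of period $N$ a multiple of $n$, with $\|W - V\|_\infty < \eps$, such that all gaps of $W$ are open, $\sigma(H_W) \subseteq \sigma(H_V)$ up to a displacement of the outer band edges by at most $\eps$, each band of $\sigma(H_V)$ is split by a newly opened gap so that $|\sigma(H_W)| \le \tfrac12 |\sigma(H_V)|$, and --- via the Chebyshev identity $\Delta_N = 2\,T_{N/n}(\Delta_n/2)$, which lets a small perturbation produce large values of $|\Delta_N|$ once $N/n$ is taken large --- the discriminant of $W$ is controlled from below on the newly opened gaps so that the Lyapunov-exponent estimates there survive all later, summable perturbations. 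One obtains $W$ by perturbing $V$ within the finite-dimensional space of $N$-periodic potentials to force $\Delta_N$ to cross $\pm 2$ transversally at prescribed band edges, which the analyticity of $\Delta_N$ and the classical band-edge eigenvalue formulas permit. Balancing the period $N$, the perturbation size, the measure contraction, and the growth of $|\Delta_N|$ on the new gaps against one another is the delicate point.

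Iterating with a rapidly decreasing, summable budget $(\eps_k)$ produces $f = \lim f_k$ with $\|f-f_0\|_\infty$ as small as we like. Each gap, once opened at some stage, retains a positive width in the limit (the remaining perturbations being summable and small), so $\Sigma := \sigma(H_f)$ is contained in $\sigma(H_{f_k})$ up to an error tending to $0$; with $|\sigma(H_{f_k})| \le 2^{-k}|\sigma(H_{f_0})| \to 0$ this shows $\Sigma$ is a Cantor set of zero Lebesgue measure. For the Lyapunov exponent one passes to the limit through the Thouless formula $L_f(E) = \int \log|E-E'|\,dN_f(E')$: the integrated density of states $N_f$ depends continuously on the potential and the construction controls its modulus of continuity, which yields continuity of $L_f$ in $E$; and, although $L_{f_k}$ vanishes on $\sigma(H_{f_k}) \supseteq \Sigma$ at every finite stage, the quantitative discriminant bounds from the lemma are arranged so that $L_f \ge c$ for a fixed $c > 0$ on the gaps of $H_f$. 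Since $\Sigma$ has empty interior these gaps are dense, so continuity propagates $L_f \ge c$ to all of $\R$, in particular to $\Sigma$; undoing the rescaling by $\lambda$ finishes.

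The main obstacle is the gap-opening lemma with its competing quantitative demands --- small perturbation, definite contraction of $|\sigma|$, and discriminant bounds on the new gaps robust against all later perturbations --- together with the passage to the limit for the Lyapunov exponent, i.e.\ proving that $L_f$ is genuinely positive and continuous rather than merely nonnegative and semicontinuous. The rest is bookkeeping on the summable budget and on the nesting of the spectra.
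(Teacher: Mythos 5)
The paper itself offers no proof of this statement: it is quoted verbatim from Avila with a citation, so there is no internal argument to compare against. Judged on its own terms, your sketch does reproduce the architecture of Avila's actual proof --- periodic approximation with divisible periods, a quantitative gap-opening/hyperbolicity lemma for the discriminant whose estimates are robust under all later (summable) perturbations, geometric contraction of the band measure, and passage to the limit for the Lyapunov exponent --- so at the level of strategy it is faithful.

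There are, however, two genuine gaps. First, the reduction ``after rescaling the potential we may fix $\lambda=1$'' is not valid: the theorem asserts a \emph{single} dense set of $f$ for which the conclusion holds simultaneously for \emph{every} $\lambda\neq 0$. Rescaling replaces $f$ by $\lambda f$, so proving the statement only at coupling $1$ would require your dense set to be a union of punctured lines through the origin, which your construction does not provide. In Avila's argument this uniformity is a central difficulty: the gap-opening lemma must deliver estimates uniform over $\lambda$ in compact subsets of $\R\setminus\{0\}$, and the inductive scheme exhausts the couplings (handling, say, $|\lambda|\in[1/k,k]$ at stage $k$) while preserving the bounds already obtained. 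Second, your treatment of continuity of $E\mapsto L(E)$ leans on the Thouless formula and ``continuity of the integrated density of states,'' but the logarithmic kernel means that continuity (even log-H\"older continuity) of $N_f$ does not by itself yield continuity of $L_f$; in general the Lyapunov exponent of an ergodic operator need not be continuous in $E$. What actually gives continuity here is the uniform convergence of the finite-period approximants $\tfrac{1}{n_k}\log\|A_{n_k}(E)\|$ guaranteed by the quantitative hyperbolicity estimates, and this needs to be stated as part of the inductive hypotheses rather than invoked after the fact. Once continuity is in hand, your propagation of the lower bound $L_f\ge c$ from the dense complement of the zero-measure spectrum to all of $\R$ is sound.
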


By the above theorem, Avila gave first examples of ergodic potentials with a spectrum of zero Lebesgue measure such that
the Lyapunov exponent  is positive throughout the spectrum. This answers a question raised by Simon (Conjecture 8.1 of \cite{s}).

\begin{theorem} \emph{\cite[Theorem 1.1]{dg1}}
Suppose $\Omega$ is a Cantor group and $T$
a minimal translation. We have \\[1mm]
{\rm (1)} For a dense set of $f \in C(\Omega,\R)$ and $\lambda \neq 0$, the spectrum of $H^\omega_{\lambda f,T}$ is
a Cantor set of positive Lebesgue measure and purely absolutely continuous.  \\[0.5mm]
{\rm (2)} For a dense $G_\delta$-set of $f \in C(\Omega,\R)$, every $\lambda \neq 0$ and every $\omega \in \Omega$, 
the spectrum of $H^\omega_{\lambda f,T}$ is a Cantor set of zero Lebesgue measure and purely singular continuous. \\[0.5mm]
\end{theorem}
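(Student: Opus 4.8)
The plan is to obtain both parts by perturbing periodic potentials, using the structure set up in Section~4. Recall that a limit-periodic potential $V^\omega_{\lambda f,T}$ arising from $f\in C(\Omega,\R)$ is the uniform limit of the periodic potentials $V^\omega_{\lambda f_k,T}$, where $f_k\in P_k$ is the $n_k$-periodic function obtained by Haar averaging $f$ over the finite-index Cantor subgroup $\Omega_k$ (with $\bigcap_k\Omega_k=\{e\}$); consequently $H^\omega_{\lambda f,T}$ is the norm limit of the periodic operators $H^\omega_{\lambda f_k,T}$, each of which has purely absolutely continuous spectrum equal to a finite union of nondegenerate closed intervals. Since the $n_k$-periodic functions are dense in $C(\Omega,\R)$, in each part it suffices to produce an $f$ with the stated properties inside an arbitrarily small ball around a prescribed periodic function.

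For part~(1) I would carry out a rapidly-converging construction in the spirit of Moser, P\"oschel and Avron--Simon. Fix a periodic $f^{(0)}$ and work in a small ball around it; build $f=\lim_k f_k$ by choosing each periodic correction $f_{k+1}-f_k$ so small relative to $n_k$ that passing from period $n_k$ to period $n_{k+1}$ opens only finitely many new spectral gaps, of total length at most $2^{-k}\delta$ for a fixed small $\delta$, while still opening at least one new gap inside each previously present band. Then the limiting spectrum, obtained from the step-$k$ band structures by successively deleting these gaps, is nowhere dense but of Lebesgue measure at least (measure of the starting band structure)$\,-\,\delta>0$; and the same smallness, propagated through an inductive a~priori bound on the transfer matrices (equivalently, on the Weyl $m$-functions) of $H^\omega_{\lambda f_k,T}$, forces the limit operator to have purely absolutely continuous spectrum. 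Continuity of the integrated density of states of $H^\omega_{\lambda f,T}$ --- it is the uniform limit of the continuous densities of states of the periodic approximants --- rules out isolated points, so the spectrum is a positive-measure Cantor set. Letting $f^{(0)}$ range over a dense set of periodic functions gives the asserted dense set of $f$.

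For part~(2) I would start from the preceding theorem (Avila's): for a dense set of $f$ and every $\lambda\neq0$, the spectrum of $H^\omega_{\lambda f,T}$ has zero Lebesgue measure and the Lyapunov exponent is continuous and strictly positive on it. First I would upgrade ``dense set'' to ``dense $G_\delta$'': the set of $f$ with $|\sigma(H^\omega_{\lambda f,T})|<1/n$ is open --- because $\sigma(H^\omega_{\lambda f,T})$ depends continuously on $f$ in the Hausdorff metric, so nearby spectra are contained in arbitrarily small neighborhoods of a given one --- whence $\{f:|\sigma(H^\omega_{\lambda f,T})|=0\}$ is a $G_\delta$, and intersecting it with the dense $G_\delta$ on which the Lyapunov exponent stays positive throughout the spectrum (a similar semicontinuity argument) yields a dense $G_\delta$ on which both conditions hold. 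On this set, zero Lebesgue measure of the spectrum already forbids any absolutely continuous component (the a.c. part of the spectral measure is absolutely continuous with respect to Lebesgue measure) and forces empty interior; strict positivity of the Lyapunov exponent is the distinctive feature of this regime and may alternatively be invoked, via Kotani theory, to exclude the a.c. part. To exclude eigenvalues for \emph{every} $\omega\in\Omega$, I would further thin the $G_\delta$ so that $V^\omega_{\lambda f,T}$ admits, along a subsequence, periodic approximants agreeing with it on windows whose length grows fast enough compared with the corresponding period; a Gordon-type two-block argument then shows that no solution of $H^\omega_{\lambda f,T}u=Eu$ can lie in $\ell^2(\Z)$, uniformly in $\omega$. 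Hence the spectrum is purely singular continuous; continuity of the integrated density of states again gives no isolated points, and with empty interior it is a Cantor set.

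The principal obstacle is the quantitative heart of part~(1): one must simultaneously keep the newly opened gaps summable with total length \emph{strictly less} than the measure of the starting band structure and preserve absolute continuity in the limit, which demands the delicate inductive control of transfer matrices and resolvents characteristic of the rapidly-converging limit-periodic theory. For part~(2) the delicate point is to reconcile the two competing demands on the approximation rate --- fast enough along a subsequence to kill eigenvalues by the Gordon argument, yet compatible with the genericity of the positive-Lyapunov-exponent condition supplied by Avila's construction --- and to make the eigenvalue exclusion hold simultaneously for all $\omega\in\Omega$.
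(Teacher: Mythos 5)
This theorem is not proved in the paper you were given: it is quoted verbatim from Damanik--Gan \cite{dg1}, and the only commentary the expository paper adds is the remark immediately following it, namely that \emph{the proof shows the Lyapunov exponent vanishes throughout the spectrum}. That remark already exposes the one genuine error in your proposal. In part (2) you propose to intersect the zero-measure $G_\delta$ with ``the dense $G_\delta$ on which the Lyapunov exponent stays positive throughout the spectrum (a similar semicontinuity argument).'' No such dense $G_\delta$ exists in this setting, and the semicontinuity in fact points the other way: $L(E,\cdot)$ is upper semicontinuous in $f$ (an infimum of continuous functions), periodic sampling functions are dense in $C(\Omega,\R)$ and have $L\equiv 0$ on their spectra, so the \emph{generic} behaviour is $L\equiv 0$ on the spectrum --- exactly what the paper asserts. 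Positivity of $L$ on the spectrum is Avila's \emph{dense but meager} set; part (2) lives in the complementary, zero-Lyapunov-exponent regime. Fortunately this step is dispensable in your own argument, since you correctly observe that zero Lebesgue measure of the spectrum already excludes an absolutely continuous component; but the Kotani-theory alternative you offer is not available here, and you should delete the positive-LE intersection. The rest of your part (2) (upper semicontinuity of $f\mapsto|\sigma(H^\omega_{\lambda f,T})|$ via Hausdorff continuity of the spectrum, density from Avila's theorem, and a uniform-in-$\omega$ Gordon argument on a further dense $G_\delta$ to kill eigenvalues) is the route taken in \cite{dg1}, modulo checking that the Gordon set is itself a dense $G_\delta$ and that the zero-measure condition can be made uniform over all $\lambda\neq 0$ by a compactness argument in $\lambda$.

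For part (1) your plan is the right one (rapidly converging periodic approximants in the Pastur--Tkachenko/Avron--Simon style), but as written it asserts rather than proves the two points that constitute the entire content of the theorem. First, the implication ``the corrections are small enough $\Rightarrow$ the limit operator is purely absolutely continuous'' is the analytic heart of the matter; one needs quantitative control showing that super-exponentially fast periodic approximation forces purely a.c.\ spectrum (in \cite{dg1} this is done by invoking the Pastur--Tkachenko/Egorova machinery, i.e.\ uniform bounds on the periodic Weyl $m$-functions that survive the limit), and nothing in your sketch supplies it. Second, ``opening at least one new gap inside each previously present band'' does not by itself yield nowhere density: a nested sequence of bands each containing the next can retain length bounded below unless you also force the maximal band length at stage $k$ to tend to zero (which requires opening gaps at essentially all new frequencies $2\pi j/n_{k+1}$, with quantitative lower bounds on gap openings in terms of Fourier coefficients of the perturbation). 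Both points are fixable, but they are precisely where the proof lives, so as it stands the proposal is a correct outline with the decisive estimates missing.
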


The proof of the above theorem shows that the Lyapunov exponent
vanishes throughout the spectrum. This gives  spectral information
about the limit-periodic potentials in the regime of zero Lyapunov
exponents.

\begin{definition} \label{d.gordon}
A bounded map $V: \Z \rightarrow \R$ is called a Gordon potential if
there exist positive integers $q_j \rightarrow \infty$ such that
$$
\max_{1\leq n \leq q_j}\left|V(n)-V(n\pm q_j) \right|\leq j^{-q_j}
$$
for every $j \ge 1$.
\end{definition}

\begin{theorem} \emph{\cite[Theorem 1.3]{dg2}} \label{thm:sc}
Suppose $\Omega$ is a Cantor group and $T$ a minimal translation.
Then there exists a dense set $\mathcal{F} \subset C(\Omega,\R)$
such that for every $f \in \mathcal{F}$, every $\lambda \neq 0$ and every $\omega \in \Omega$,
the following statements hold true: the spectrum of
$H^\omega_{\lambda f,T}$ has zero Hausdorff dimension, it is purely
singular continuous, and $E \mapsto L(E,T,\lambda f)$ is a positive
continuous function. 
\end{theorem}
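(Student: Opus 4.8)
\noindent\emph{Proof strategy.} The statement bundles three assertions about $H^\omega_{\lambda f,T}$ --- positivity and continuity of $E\mapsto L(E,T,\lambda f)$, pure singular continuity of the spectrum, and zero Hausdorff dimension of the spectrum --- and $\mathcal F$ must be engineered so that all three hold at once for every $\lambda\neq 0$ and every $\omega\in\Omega$. Fix notation. Since $f$ is limit-periodic, Section~4 supplies a decreasing sequence of Cantor subgroups $\Omega_k$ of finite index $n_k$ with $\bigcap_k\Omega_k=\{e\}$, and $\Omega_k$-invariant periodic approximants $q_k\in C(\Omega,\R)$ with $\delta_k:=\|f-q_k\|_\infty\to 0$; the potential $q_k(T^{\cdot}(\omega))$ then has period $n_k$ along every orbit, because the translation induced on the cyclic quotient $\Omega/\Omega_k$ has order $n_k$. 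Write $H^\omega=H^\omega_{\lambda f,T}$, let $H^\omega_k$ be the $n_k$-periodic operator with potential $\lambda q_k(T^{\cdot}(\omega))$, and let $\Sigma$, $\Sigma_k$ be their (nonrandom, $\omega$-independent) spectra. I would build $\mathcal F$ so that: (a) $f$ lies in the dense set supplied by Avila's theorem quoted above, so that $L(\cdot,T,\lambda f)$ is a positive continuous function on $\R$ and $|\Sigma|=0$ for all $\lambda\neq 0$; (b) $\delta_k$ is super-exponentially small in $n_k$, say $\delta_k\le 2^{-n_k^2}$; and (c) a quantitative bandwidth bound $|\Sigma_k|\le\beta_k$ holds, with $\beta_k$ decaying fast enough that $n_k^{\,1-\alpha}\beta_k^{\,\alpha}\to 0$ for every $\alpha\in(0,1)$.

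Granting (a)--(c), the three conclusions follow. Part (a) is exactly the statement about $L$. For pure singular continuity: any nonzero absolutely continuous part of the spectrum would have essential support a subset of $\Sigma$ of positive Lebesgue measure, which is excluded by $|\Sigma|=0$; and (b) forces $V^\omega_{\lambda f,T}$ to be a Gordon potential in the sense of Definition~\ref{d.gordon} for every $\omega$ and every $\lambda\neq0$, since $\Omega_k$-invariance of $q_k$ gives $q_k(T^{n\pm n_k}(\omega))=q_k(T^n(\omega))$, hence $\max_{1\le n\le n_k}|V^\omega_{\lambda f,T}(n)-V^\omega_{\lambda f,T}(n\pm n_k)|\le 2\lambda\delta_k$, which is smaller than $n_k^{-n_k}$ along the relevant subsequence of scales; Gordon's two-block argument then rules out eigenvalues for every $\omega$, and the spectrum is purely singular continuous. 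For zero Hausdorff dimension: $\|H^\omega-H^\omega_k\|=\lambda\delta_k$ gives $\Sigma\subset\Sigma_k+[-\lambda\delta_k,\lambda\delta_k]$, and $\Sigma_k$ is a union of at most $n_k$ intervals of total length $|\Sigma_k|\le\beta_k$; thus $\Sigma$ is covered by at most $n_k$ intervals, the $i$-th of length $\ell_i+2\lambda\delta_k$ with $\sum_i\ell_i\le\beta_k$, and for $\alpha\in(0,1)$ the corresponding Hausdorff sum is at most $n_k^{\,1-\alpha}\beta_k^{\,\alpha}+n_k(2\lambda\delta_k)^{\alpha}$ by concavity of $t\mapsto t^\alpha$. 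Both terms tend to $0$, by (c) and (b) respectively, and the mesh of the cover tends to $0$, so $\mathcal H^\alpha(\Sigma)=0$ for every $\alpha>0$.

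The main obstacle is to secure (c) while keeping (a) and (b). It is essential that (c) be built into the construction rather than read off the limit: although $|\Sigma_k|\le|\Sigma+[-\lambda\delta_k,\lambda\delta_k]|\to|\Sigma|=0$ holds abstractly, the rate of that convergence is governed by the Hausdorff dimension of $\Sigma$ itself, which is precisely what one is trying to control --- so that route is circular. Instead I would revisit the inductive genericity scheme behind Avila's theorem and carry through it, at each scale, an explicit upper bound on the total bandwidth of the $n_k$-periodic approximant. Concretely, when $n_k/n_{k-1}$ is prime there is the classical composition relation between the discriminants $\Delta_k(E)=\tr M_{n_k}(E)$ and $\Delta_{k-1}$, from which $|\Sigma_k|=|\{E:|\Delta_k(E)|\le 2\}|$ can be estimated once the new period-$n_k$ perturbation is chosen small and structured relative to $q_{k-1}$; passing to a sufficiently sparse subsequence of scales then yields any prescribed decay $\beta_k$ while simultaneously achieving (b). One must finally verify that the functions $f$ produced this way still form a dense subset of $C(\Omega,\R)$ and that the construction remains consistent with membership in Avila's set; this is the remaining bookkeeping, and is manageable given the flexibility built into that construction. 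Alternatively, one may take the positive-Lyapunov-exponent results of \cite{dg1,dg2} as a black box to supply (a), so that only the bandwidth estimate (c) and the Gordon input (b) need to be added.
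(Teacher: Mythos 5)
You should first note that the paper you are working from does not actually prove this theorem: it is quoted verbatim from \cite{dg2} (Theorem 1.3 there), and the surrounding text only records the architecture of the argument --- Theorem \ref{thm:gordon} (the Gordon property) supplies absence of point spectrum, and absence of absolutely continuous spectrum follows from zero Hausdorff dimension. Your skeleton is consistent with that architecture and with the actual proof in \cite{dg2}: periodic approximants $q_k$ constant on cosets of $\Omega_k$, super-exponential closeness $\delta_k$ giving the Gordon condition uniformly in $\omega$ (your use of $\Omega_k$-invariance to get $q_k(T^{n\pm n_k}(\omega))=q_k(T^n(\omega))$ for \emph{every} $\omega$ is exactly the right mechanism and is why the conclusion is phase-uniform), and a covering argument $\Sigma\subset\Sigma_k+[-|\lambda|\delta_k,|\lambda|\delta_k]$ with at most $n_k$ bands of total length $\beta_k$ giving $\mathcal H^\alpha(\Sigma)=0$. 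Those reductions are sound.

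The genuine gap is that items (a) and (c) --- which together are the entire analytic content of the theorem --- are not established but deferred to ``revisiting the inductive genericity scheme.'' You cannot obtain (a) by citing Avila's theorem as a black box: Avila's dense set is not known to contain functions also satisfying your (b) and (c), and density of an intersection of two dense sets is not automatic (Avila's construction would have to be rerun with the extra quantitative constraints, which is precisely what \cite{dg2} does). Moreover (c) is not a soft consequence of anything: for a fixed $\lambda$ one needs the total bandwidth $\beta_k$ of the $n_k$-periodic approximant to decay fast enough that $n_k^{1-\alpha}\beta_k^{\alpha}\to 0$ for all $\alpha>0$, \emph{and} this must hold for every $\lambda\neq 0$ simultaneously, including $\lambda$ small --- which is where the difficulty concentrates, since small coupling tends to fatten bands and kill Lyapunov positivity. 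The required input is a quantitative lower bound on the discriminant of the periodic approximants (forcing both thin bands and a positive, continuous limit of the Lyapunov exponents, uniformly on compact sets of energies and of couplings), and that estimate, together with the verification that the resulting $f$'s are dense in $C(\Omega,\R)$, is the bulk of the proof in \cite{dg2}. As written, your argument proves ``if a dense $\mathcal F$ with (a)--(c) exists, then the theorem holds,'' which is a correct but strictly weaker statement.
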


\begin{theorem} \emph{\cite[Theorem 1.5]{dg2}} \label{thm:gordon}
Suppose $\Omega$ is a Cantor group and $T$ a minimal translation.
Then there exists a dense set $\mathcal{F} \subset C(\Omega,\R)$
such that for every $f \in \mathcal{F}$, every minimal translation
$T : \Omega \rightarrow \Omega$, every $\omega \in \Omega$, and
every $\lambda \neq 0$, $\lambda f(T^n(\omega))$ is a Gordon
potential.
\end{theorem}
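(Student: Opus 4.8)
The plan is to let $\mathcal{F}$ consist of functions obtained as very rapidly convergent uniform limits of periodic functions whose periods tend to infinity. The point is that if $f$ lies within $m_j^{-2m_j}$ of an $m_j$-periodic function, for a sequence $m_j\to\infty$, then for every minimal translation $\tilde T$, every $\omega$ and every $\lambda\neq 0$ the potential $V(n)=\lambda f(\tilde T^n(\omega))$ satisfies $\max_{1\le n\le m_j}|V(n)-V(n\pm m_j)|\le 2|\lambda|\,m_j^{-2m_j}$, and once $j$ is large (depending on $\lambda$) this beats $m_j^{-m_j}$, hence $j^{-m_j}$ after a harmless reindexing --- which is exactly the Gordon condition with $q_j=m_j$.

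First I would collect the ingredients from Section 4.1. By Proposition~\ref{prop:avilahull} there is a decreasing sequence of compact open (hence clopen) finite-index subgroups $\Omega_k\le\Omega$ with $\bigcap_k\Omega_k=\{e\}$; since a Cantor group has no isolated points the indices $n_k=[\Omega:\Omega_k]$ tend to infinity. For any finite-index clopen $\Omega_0$ and any $g\in C(\Omega,\R)$ the averaged function $g_{\Omega_0}$ of Section 4.1 is constant on the cosets of $\Omega_0$; and if $\tilde T(\omega)=\omega\tilde\omega_0$ is any translation, then $\tilde\omega_0^{\,[\Omega:\Omega_0]}\in\Omega_0$ by Lagrange's theorem, so $g_{\Omega_0}$ is $[\Omega:\Omega_0]$-periodic with respect to every translation, in particular with respect to every minimal translation. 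This is what will make the statement uniform over $\tilde T$. Finally $g_{\Omega_0}\to g$ uniformly as $\Omega_0$ shrinks to $\{e\}$ (the fact used in Section 4.1 for density of periodic functions), so one can approximate any $g$ arbitrarily well by a periodic function whose period $[\Omega:\Omega_k]$ is as large as desired.

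Next, given $h\in C(\Omega,\R)$ and $\eta>0$, I would build $f$ inductively. Put $g_1=h_{\Omega_{k_1}}$ with $\|g_1-h\|_\infty<\eta/2$, and, having $g_j$ which is $m_j$-periodic with $m_j=n_{k_j}$, put $g_{j+1}=(g_j)_{\Omega_{k_{j+1}}}$ with $k_{j+1}$ large enough that $m_{j+1}=n_{k_{j+1}}>m_j$ and $\|g_{j+1}-g_j\|_\infty\le 2^{-(j+1)}\min(\eta,\,m_j^{-2m_j})$; both requirements hold for all large $k_{j+1}$. Then $f=\lim_j g_j$ exists in $C(\Omega,\R)$, and summing the tail (using $m_i\ge m_j$ for $i\ge j$) gives $\|f-h\|_\infty<\eta$ and $\|f-g_j\|_\infty\le m_j^{-2m_j}$, with each $g_j$ being $m_j$-periodic with respect to every minimal translation and $m_j\ge j$. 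I would take $\mathcal{F}$ to be the set of all $f$ produced this way; it is dense by the first inequality.

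Finally, to conclude: fix $f\in\mathcal{F}$, a minimal translation $\tilde T$, $\omega\in\Omega$ and $\lambda\neq 0$, and set $V(n)=\lambda f(\tilde T^n(\omega))$. Since $g_j(\tilde T^{n\pm m_j}(\omega))=g_j(\tilde T^n(\omega))$, a three-term estimate gives $|V(n)-V(n\pm m_j)|\le 2|\lambda|\,m_j^{-2m_j}$ for all $n$. Pick $J=J(\lambda)$ with $2|\lambda|\le m_j^{m_j}$ for all $j\ge J$, and set $q_i=m_{J+i-1}$; then $q_i\to\infty$ and $\max_{1\le n\le q_i}|V(n)-V(n\pm q_i)|\le m_{J+i-1}^{-m_{J+i-1}}=q_i^{-q_i}\le i^{-q_i}$, using $m_{J+i-1}\ge J+i-1\ge i$. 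Thus $V$ is a Gordon potential. The only point requiring care is that no single Gordon sequence can serve all $\lambda$ at once, since $\lambda$ ranges over an unbounded set; this is exactly why one over-shrinks the tails to $m_j^{-2m_j}$ rather than $m_j^{-m_j}$ and lets both the sequence $q_i$ and the cutoff $J$ depend on $\lambda$. Everything else --- the clopen chain, convergence of the averaging operator, summability of the series --- is routine.
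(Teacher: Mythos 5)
First, a point of comparison: the paper itself does not prove Theorem~\ref{thm:gordon}; it is quoted from \cite{dg2} without proof, so the only material to measure your argument against is the machinery of Section~4.1 (averaging over clopen finite-index subgroups, and the quoted fact that periodicity of a sampling function is independent of the chosen minimal translation). Your overall strategy --- superexponentially accurate approximation by functions that are periodic with respect to \emph{every} translation, a three-term estimate giving $|V(n)-V(n\pm m_j)|\le 2|\lambda|\,m_j^{-2m_j}$, and a reindexing of the Gordon sequence to absorb the factor $2|\lambda|$ --- is the right one and is consistent with how \cite{dg2} proceeds. The Lagrange-theorem observation that $g_{\Omega_0}$ is $[\Omega:\Omega_0]$-periodic for every translation, and the final bookkeeping with $J(\lambda)$ and $q_i=m_{J+i-1}\ge i$, are correct.

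There is, however, a genuine defect in your inductive construction: the recursion $g_{j+1}=(g_j)_{\Omega_{k_{j+1}}}$ is degenerate. Since $g_j$ is already constant on cosets of $\Omega_{k_j}$ and $\Omega_{k_{j+1}}\subseteq\Omega_{k_j}$, averaging $g_j$ over the smaller subgroup returns $g_j$ unchanged (the integrand is constant and $\mu_{\Omega_{k_{j+1}}}$ is normalized); hence $g_j=g_1$ for all $j$ and $f=h_{\Omega_{k_1}}$ is simply a periodic function. Your $\mathcal{F}$ is therefore just a dense family of periodic sampling functions. This does not actually sink the theorem as stated --- a periodic potential is trivially a Gordon potential (take the $q_j$ to be multiples of the period, so all the differences vanish), and periodic functions are dense --- but it renders all of your tail estimates vacuous and misses the point of the result in \cite{dg2}, where $\mathcal{F}$ must contain non-periodic functions so that it can be intersected with the dense sets responsible for positive Lyapunov exponents and zero Hausdorff dimension. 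The standard repair is to define $\mathcal{F}$ directly as the set of $f$ admitting, for infinitely many $m$, an approximant that is $m$-periodic with respect to some (hence every) minimal translation and lies within $m^{-2m}$ of $f$ in the sup norm; density is then proved by starting from a periodic $g_1$ near $h$ and adding at each step a \emph{new, nonzero} $\Omega_{k_{j+1}}$-invariant perturbation of size at most $\min(2^{-(j+1)}\eta,\,m_j^{-2m_j})$, rather than by re-averaging. With that modification the remainder of your argument goes through verbatim.
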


Theorem \ref{thm:sc} and Theorem \ref{thm:gordon} are both from
\cite{dg2}. The Gordon lemma ensures that the Schr\"odinger operators
with Gordon potentials have no point spectrum. The proof of Theorem
\ref{thm:sc} used Theorem \ref{thm:gordon} to conclude absence of
point spectrum. Absence of $a.c.$ spectrum follows from zero
Hausdorff dimension. Theorem \ref{thm:sc} gives first examples of
limit-periodic potentials with purely singular continuous spectrum
in the regime of positive Lyapunov exponents.

Also, recent work of Damanik and Gorodetski, announced in \cite{dg},
focused on the weakly coupled Fibonacci Hamiltonian, that is an
ergodic model not (uniformly) almost periodic. One of the theorems
in \cite{dg} says that the Hausdorff dimension of the spectrum, as a
function of the coupling constant, is continuous at zero. In
contrast to this, Theorem \ref{thm:sc} tells us that there are
limit-periodic potentials such that continuity at zero coupling
fails since the Hausdorff dimension of the spectrum is zero for all
non-zero values of the coupling constant.

\begin{definition}
We say that a family $\{u_j\} \subset \ell^2(\Z)$ is uniformly
localized if there exist constants $r > 0$, called the decay rate,
and $c < \infty$ such that for every element $u_j$ of the family,
one can find $m_j \in \Z$, called the center of localization, so
that $|u_j(n)| \leq c e^{-r|n-m_j|}$ for every $n \in \Z$. We say
that the operator $H_\omega$ has ULE if it has a complete set of
uniformly localized eigenfunctions.\footnote{Recall that a set of
vectors is called complete if their span (i.e., the set of finite
linear combinations of vectors from this set) is dense.}
\end{definition}

For any Cantor group $\Omega$ that admits a minimal translation $T$,
there exists a limit-periodic potential $d$  such that
$\mathrm{hull}(d) \cong \Omega$. Write $M_d$ as the maximal
frequency integer set of $d$. We also say that $M_d$ is the maximal
frequency integer set of $\Omega$. Theorem \ref{cor:hull} ensures
that it is well defined.

\begin{definition}
Given a Cantor group that admits a minimal translation, we say that
it satisfies the condition $\mathscr{A}$ if its maximal frequency
integer set $M = \{m_j\} \subset \Z^+$ has the following property:
there exists some integer $m \ge 2$ such that for every $j$, we have
$m_j < m_{j+1} \leq m^m_j,$ that is, $\log m_{j+1}/\log m_j$ is
uniformly bounded.
\end{definition}

\begin{theorem} \emph{\cite[Theorem 2.5]{dg3}} \emph{\label{thm:uniformlocal}}
Suppose that $\Omega$ satisfies the condition $\mathscr{A}$. Then
there exists an $f \in C(\Omega,\R) $ such that for every $\om \in
\Omega$, $H^\omega_{f,T}$ has the same pure point spectrum and ULE
with $\omega$-independent constants.
\end{theorem}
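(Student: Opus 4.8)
\medskip
\noindent\textbf{Proof idea.} The plan is to reduce the statement to an explicit construction of a limit-periodic potential on $\Z$ and then transport it to $\Omega$ via the correspondence of Section 4. Write $M=\{m_j\}$ for the maximal frequency integer set of $\Omega$, which is well defined by Theorem \ref{cor:hull}. The construction in Section 4 identifies $\Omega$ with $\bar B=\overline{\{nE:n\in\Z\}}$ and the minimal translation $T$ with $x\mapsto x+E$, so that $m_j$-periodic sampling functions on $\Omega$ are exactly pullbacks of functions on $\Z_{m_j}$. By Theorem \ref{thm:iso} it therefore suffices to produce one limit-periodic $d\in\ell^\infty(\Z)$ with $\mathrm{hull}(d)\cong\Omega$ such that the operator $H_d$ from (\ref{equ:oper}) with potential $d$, and \emph{every} operator whose potential lies in $\mathrm{hull}(d)$, has a complete set of eigenfunctions obeying $|u(n)|\le c\,e^{-r|n-m_u|}$ with one pair $(r,c)$; the $f\in C(\Omega,\R)$ attached to $d$ by Theorem \ref{thm:iso} then satisfies $V^{\om}_{f,T}\in\mathrm{hull}(d)$ for every $\om\in\Omega$, so $H^{\om}_{f,T}$ inherits the same eigenvalue set and the same ULE constants.

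First I would build $d=\sum_{j\ge1}p_j$ scale by scale along $M$. Start from the free operator and inductively choose $m_j$-periodic potentials $p_j$ with $\|p_j\|_\infty=\delta_j\downarrow0$, so that $H^{(j)}$, the operator with potential $\sum_{i\le j}p_i$, is $m_j$-periodic and purely a.c., with spectrum a finite union of bands. The potential $p_{j+1}$ inserts, with period $m_{j+1}$, a shallow well of depth $\sim\delta_{j+1}$ placed far (at scale $m_j$) from the wells used at earlier scales; the well pulls an eigenvalue of the single-well problem into a gap of $H^{(j)}$, and after periodization this produces an exponentially thin band whose Bloch states are, up to an exponentially small tail, supported on a window of length $\ll m_{j+1}$ around each site $\equiv c_{j+1}\pmod{m_{j+1}}$. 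The inductive step needs two opposing estimates: (i) $\delta_{j+1}$ small enough that, by Combes--Thomas decay of the Green's function of $H^{(j)}$ in its gaps, the eigenfunctions already produced at scales $\le j$ are displaced only by an exponentially small amount when $p_{j+1},p_{j+2},\dots$ are switched on; and (ii) $\delta_{j+1}$ bounded below so that the gap it opens, and hence the decay rate of the new eigenfunctions, stays above a fixed $r>0$. Condition $\mathscr A$ --- that $m_j<m_{j+1}\le m_j^{m}$ for a fixed $m\ge2$ --- is exactly what reconciles (i) and (ii): the Green's-function tails that must dominate $\delta_{j+1}$ are summed over distances $\sim m_j$, while the competing logarithmic quantities at step $j+1$ are $\lesssim\log m_j$ by the bound $\log m_{j+1}/\log m_j\le m$, so one can fix $r$ once and for all and still choose the $\delta_j$ decreasing fast enough that $\sum_j\delta_j<\infty$ and all the matchings go through.

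Then $d=\sum_j p_j$ converges uniformly, $d$ is limit-periodic, $\{m_j\}$ is a frequency integer set of $d$, and $\mathrm{hull}(d)\cong\bar B\cong\Omega$ by Theorem \ref{thm:subfrequency}. For each $j$ and each residue class modulo $m_j$, the corresponding scale-$j$ window state converges, by (i), to a genuine eigenfunction of $H_d$ with the uniform bound $|u(n)|\le c\,e^{-r|n-m_u|}$; because the whole construction is assembled from periodic pieces, the same windows appear around every point of $\Z$, which is why the estimate holds for every potential in $\mathrm{hull}(d)$ with the same $(r,c)$. Completeness follows from a density argument: the localization centres at scale $j$ have density $1/m_j\to0$, so a vector orthogonal to all of these eigenfunctions would be orthogonal to a family of spectral projections of $H_d$ whose ranges span a dense subspace, hence be $0$; and the set of eigenvalues together with its closure equals $\mathrm{spec}(H_d)$. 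Transporting along $f$ as in the first paragraph gives that $H^{\om}_{f,T}$ has the same pure point spectrum and ULE with $\om$-independent constants for every $\om\in\Omega$.

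The main obstacle I expect is precisely the tension in the inductive step between (i) and (ii): keeping a \emph{single} decay rate $r$ and constant $c$ valid for all scales simultaneously --- i.e. preventing the eigenfunctions created at large scales from delocalizing --- while still driving $\delta_j\to0$ fast enough for uniform convergence and for the earlier eigenfunctions to be preserved. Quantifying this is where the uniform bound on $\log m_{j+1}/\log m_j$ in condition $\mathscr A$ is indispensable; dropping it, one can still arrange pure point spectrum, but not with one common decay rate.
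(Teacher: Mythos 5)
The paper itself does not prove this theorem; it only records that \cite{dg3} obtains it by applying P\"oschel's general theorem \cite{p}, i.e.\ an inverse-spectral KAM iteration in which one \emph{prescribes} the point spectrum and constructs the potential together with a unitary, with uniformly exponentially decaying matrix elements, conjugating the operator to a diagonal one; condition $\mathscr{A}$ is what guarantees that the resulting limit-periodic class meets P\"oschel's smallness and distality hypotheses. Your forward, scale-by-scale construction is therefore a genuinely different route, but as written it has two gaps that I think are fatal rather than merely technical.

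First, the mechanism that is supposed to produce point spectrum is absent. A periodic operator never has eigenvalues: the ``exponentially thin band'' created by periodizing a shallow well consists of Bloch waves extended over all of $\Z$, not of states supported in a window of length $\ll m_{j+1}$, so your ``window states'' are not approximate eigenfunctions in any useful sense. Whether the thin bands collapse to eigenvalues in the limit, rather than surviving as absolutely continuous spectrum or degenerating into singular continuous spectrum, is governed by a competition you never address: the detuning of the wells within one residue class by the later corrections $p_{j+2},p_{j+3},\dots$ (of size at most $\sum_{i>j+1}\delta_i$) must dominate the tunnelling amplitude between neighbouring wells (of size roughly $e^{-r m_{j+1}}$), uniformly in the scale. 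This is exactly the hard part --- recall from Section 5 that for a dense $G_\delta$ of sampling functions the spectrum is purely singular continuous, so no argument that ignores this competition can succeed. Second, completeness fails by counting: you localize one thin band per scale, giving eigenfunctions of spatial density $1/m_{j+1}$ at scale $j+1$, while the remaining $m_{j+1}-1$ bands of $H^{(j+1)}$ are left fat and are never revisited; since $m_j\mid m_{j+1}$ and $m_j<m_{j+1}$, the total density $\sum_j 1/m_j$ is generically strictly less than $1$, so by the density of states the closed span of the constructed eigenfunctions is a proper subspace of $\ell^2(\Z)$. To get ULE one must localize \emph{all} bands at every scale, which is precisely what P\"oschel's diagonalization scheme accomplishes. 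Your reduction to a single potential $d$ with $\mathrm{hull}(d)\cong\Omega$ via Theorems \ref{thm:iso} and \ref{thm:subfrequency}, and your reading of condition $\mathscr{A}$ as balancing exponential decay over distances of order $m_j$ against $\log m_{j+1}\le m\log m_j$, are both in the right spirit and consistent with how \cite{dg3} uses the condition, but they do not repair the two gaps above.
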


The authors of \cite{dg3} conclude this result by applying P\"oschel'
general theorem \cite{p} to the limit-periodic potentials. Since
localization is a topic that has been explored in the context of
Schr\"odinger operators to a great extent, we would like to talk
more about this result.

In the topological setting, if $T$ is minimal and $f$ continuous,
the spectrum and the $a.c.$ spectrum are independent of $\om$;
however, the point spectrum and the $s.c.$ spectrum are in general
not. If pure point spectrum is independent of $\om$, we call it
\textit{phase stable}. This phenomenon is unusual since most of  known
models are not phase stable. For example, it is well known that pure
point spectrum cannot be independent of $\om$ in quasi-periodic
Schr\"odinger operators \cite{js}; for the Anderson model, one can
find periodic sequences $\om$ making the spectrum absolutely
continuous. Even for the Maryland model, the Schr\"odinger operator
(\ref{equ:oper}) with $\Omega = S^1$ and $V_{\om}(n) = \lam
\tan{(\om + 2 \pi n \alpha)}$, where $\alpha$ is a diophantine
irrational, we only know that for every $\om \in \Omega \setminus
\{\frac{\pi}{2} + \Z + \alpha \Z\}$ and any $\lam \neq 0$, the
Schr\"odinger operator (\ref{equ:oper}) has a pure point spectrum
\cite{fp,fg,pg}. Theorem \ref{thm:uniformlocal} gives the first
example whose spectrum is phase stable (modulo some unpublished work
of Jitomirskaya, which established a similar result for a bounded non-almost
periodic potential). Furthermore, the example has ULE for every $\omega
\in \Omega$, which is  stronger than phase stability (please refer
to \cite{j}).

There are two open questions. Is there ULE in a procyclic group of
other type, i.e., a Cantor group with minimal translations that
don't satisfy the condition $\mathscr{A}$? Given a Cantor group
$\Omega$ that admits a minimal translation, does there exist a dense
set of $f \in C(\Omega, \R)$ such that the  spectrum is
pure point?

\begin{theorem} \emph{\cite[Theorem 1.2]{gk}} \label{thm:ids}
Given any increasing continuous function $\varphi : \R^{+} \to
\R^{+}$ with \be \lim_{x\to 0} \varphi(x) = 0 \ee and a constant
$C_0 > 0$, there is a limit-periodic $V$ satisfying $\|V\|_{\infty}
\leq C_0$ such that the associated integrated density of states satisfies \be
\label{equa:thm} \limsup_{E \to E_0} \frac{|k_V(E)  - k_V(E_0)|
\log(|E - E_0|^{-1})}{\varphi(|E - E_0|)} = \infty, \ee for any $E_0
\in \sigma(\Delta  + V)$.
\end{theorem}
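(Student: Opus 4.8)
The plan is to realize $V$ as a uniform limit $V=\lim_j V^{(j)}$ of periodic potentials, where $V^{(j)}$ has period $N_j$, $N_{j-1}\mid N_j$, $N_j\to\infty$ and $\|V^{(j)}\|_\infty\le C_0$, built so that every spectral band of $\Delta+V^{(j)}$ is so short that the log-H\"older bound for its integrated density of states is nearly saturated on each band; the roughness then passes to $V$. Two soft ingredients will be used throughout: the Craig--Simon bound (for $\|W\|_\infty\le C_0$, $|k_W(E)-k_W(E')|\le\kappa(C_0)/\log(1/|E-E'|)$ once $|E-E'|$ is small), which both shows $\varphi(x)\to0$ is the natural hypothesis and caps how short bands of a bounded periodic potential can be; and the elementary comparison that $\|W-W'\|_\infty\le\varepsilon$ implies $k_{W'}(E-\varepsilon)\le k_W(E)\le k_{W'}(E+\varepsilon)$ for all $E$, so the two IDS graphs are $\varepsilon$-close horizontally.

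For the construction I would fix $N_j$ and choose $p_j$ inductively, with $V^{(j)}=\sum_{i\le j}p_i$. Given $V^{(j-1)}$ of period $N_{j-1}$ all of whose bands have length $\le\ell_{j-1}$, I would take $N_j$ a large multiple of $N_{j-1}$ and pick $p_j$ of period $N_j$ so that inside each band of $\Delta+V^{(j-1)}$ the discriminant $\tr M_{N_j}(\cdot)$ of $\Delta+V^{(j)}$ is pushed out of $[-2,2]$ except on one very short interval; this makes each band of $\Delta+V^{(j)}$ have length $\le\ell_j$, where $\ell_j>0$ is chosen so that $2\ell_j<1$, so that $\varepsilon_j:=\sum_{i>j}\|p_i\|_\infty\le\ell_j/2$ and $\sum_i\|p_i\|_\infty\le C_0$ (reserve the tail norm budget before choosing $\ell_j$; only finitely many such constraints are active at a time), and so that
\[
\frac{\log(1/(2\ell_j))}{2\,N_j\,\varphi(2\ell_j)}\ \ge\ j .
\]
Since $\varphi$ is increasing with $\varphi(0^{+})=0$ while $\log(1/x)\to\infty$, one has $\varphi(x)/\log(1/x)\to0$, so once $N_j$ is fixed this inequality holds for $\ell_j$ small enough; Craig--Simon forces $\log(1/\ell_j)\lesssim N_j$, so one takes $N_j$ a suitable mild multiple of the desired $\log(1/\ell_j)$, which is consistent with the inequality precisely because $\varphi(2\ell_j)\to0$. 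Then $V:=\lim_j V^{(j)}$ satisfies $\|V\|_\infty\le C_0$, is limit-periodic, and $\|V-V^{(j)}\|_\infty\le\varepsilon_j$.

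To transfer the roughness, fix $E_0\in\sigma(\Delta+V)$. For each $j$, since $\|V-V^{(j)}\|_\infty\le\varepsilon_j$, $E_0$ lies within $\varepsilon_j$ of some band $B_j=[a_j,b_j]$ of $\Delta+V^{(j)}$, with $b_j-a_j\le\ell_j$. The periodic IDS increases by at least $1/N_j$ across a band, so $k_{V^{(j)}}(b_j)-k_{V^{(j)}}(a_j)\ge 1/N_j$, and the comparison gives
\[
k_V(b_j+\varepsilon_j)-k_V(a_j-\varepsilon_j)\ \ge\ k_{V^{(j)}}(b_j)-k_{V^{(j)}}(a_j)\ \ge\ \frac{1}{N_j},
\]
with $E_0\in[a_j-\varepsilon_j,\,b_j+\varepsilon_j]$, an interval of length $\le\ell_j+2\varepsilon_j\le 2\ell_j$. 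As $k_V$ is nondecreasing, one endpoint $E_j\in\{a_j-\varepsilon_j,\ b_j+\varepsilon_j\}$ has $|k_V(E_j)-k_V(E_0)|\ge 1/(2N_j)$ and $|E_j-E_0|\le 2\ell_j$. Using monotonicity of $\varphi$ and of $x\mapsto\log(1/x)$ and $2\ell_j<1$,
\[
\frac{|k_V(E_j)-k_V(E_0)|\,\log(1/|E_j-E_0|)}{\varphi(|E_j-E_0|)}\ \ge\ \frac{(1/2N_j)\log(1/(2\ell_j))}{\varphi(2\ell_j)}\ =\ \frac{\log(1/(2\ell_j))}{2\,N_j\,\varphi(2\ell_j)}\ \ge\ j .
\]
The defining inequality and $N_j\to\infty$ force $\ell_j\to0$, so $E_j\to E_0$, and letting $j\to\infty$ shows the $\limsup$ is $+\infty$ at $E_0$; since $E_0$ was arbitrary, the statement follows.

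The hard part is the feasibility claim inside the construction: at stage $j$ one must produce a period-$N_j$ potential bounded by $C_0$ all of whose bands have length $\le\ell_j$ with $\log(1/\ell_j)$ comparable to $N_j$ (Craig--Simon-extremal), and obtain it as a norm-convergent perturbation of the previous stage along the tower $N_{j-1}\mid N_j$. The route is perturbative analysis of the discriminant: at energies of $\sigma(\Delta+V^{(j-1)})$ the Lyapunov exponent vanishes, so $\tr M_{N_j}$ and its $V$-gradient are only polynomially large in $N_j$ there, and a perturbation $p_j$ of size $\sim N_j^{-O(1)}$ (hence summable) can already replace the $N_j/N_{j-1}$ oscillations of $\tr M_{N_j}$ inside a band of $\Delta+V^{(j-1)}$ by a single exponentially narrow excursion into $[-2,2]$---here one uses that a degree-$N_j$ polynomial can carry a single feature on scale $e^{-cN_j}$---and doing this inside every band of $\Delta+V^{(j-1)}$ simultaneously places a short band of $\Delta+V^{(j)}$ within $\varepsilon_j$ of every point of $\sigma(\Delta+V)$. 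Equivalently, in gap-labelling terms, one is forcing consecutive Johnson--Moser labels $m/N_j$ and $(m+1)/N_j$ of $\Delta+V^{(j)}$ to sit within $\ell_j$ of each other near every spectral point. Executing this quantitative gap-opening uniformly over the whole spectrum, with vanishing perturbations and compatibly along the tower, is where the real work lies; the transfer above is what one writes down first.
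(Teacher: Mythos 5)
This expository paper does not actually prove Theorem \ref{thm:ids}; it quotes it from the reference \cite{gk} and records only that the construction goes through a Cantor group with a minimal translation and ``heavily uses'' Lemmas 3.1 and 3.2 of \cite{a}. Measured against the proof in \cite{gk}, your strategy is the same one: build $V$ as a uniform limit of period-$N_j$ potentials all of whose bands have length $\ell_j$ with $\log(1/(2\ell_j))\ge 2jN_j\varphi(2\ell_j)$, note that each band carries IDS weight exactly $1/N_j$, and transfer the resulting oscillation of $k_{V^{(j)}}$ to $k_V$ via the horizontal $\varepsilon$-stability of the IDS under sup-norm perturbations. Your transfer half is correct and complete as written: the endpoint selection, the monotonicity of $x\mapsto\log(1/x)/\varphi(x)$, the bookkeeping $\varepsilon_j\le\ell_j/2$, and the observation that the Craig--Simon bound forces $\log(1/\ell_j)\lesssim N_j$ while $\varphi(2\ell_j)\to 0$ rescues the defining inequality are all exactly the right points.

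The genuine gap is the one you yourself flag as ``the hard part'': the existence, at each stage, of a period-$N_j$ potential within a prescribed (summable) sup-norm distance of $V^{(j-1)}$, with $N_{j-1}\mid N_j$, all of whose bands are exponentially short in $N_j$ at a rate good enough for your inequality. Your discriminant-perturbation sketch (pushing $\tr M_{N_j}$ out of $[-2,2]$ except on one narrow excursion per old band, uniformly over the spectrum) is a heuristic, not an argument: you give no mechanism guaranteeing that a single small $p_j$ achieves this \emph{simultaneously} in every band, nor a quantitative lower bound on the achievable rate $\tfrac1{N_j}\log(1/\ell_j)$ as the allowed perturbation size shrinks. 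This is precisely the content of Avila's Lemmas 3.1--3.2 (small periodic perturbations producing a uniformly positive Lyapunov exponent, hence exponentially thin bands, over the whole spectrum), which is what \cite{gk} invokes at this point. If you cite that result, your proposal becomes essentially the proof in \cite{gk}; without it, the construction on which everything rests is unproved.
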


The above limit-periodic potential was also constructed through a
Cantor group and a minimal translation. The authors \cite{gk}
heavily use \cite[Lemma 3.1, Lemma 3.2]{a} to conclude
Theorem~\ref{thm:ids}. Craig and Simon in \cite{cs} showed that
integrated density of states are in general log H\"older continuous.
Theorem \ref{thm:ids} tells us that continuity of integrated
density of states cannot be improved for all potentials beyond log
H\"older continuity. These limit-periodic potentials are dense in
the space of limit-periodic potentials.

\section*{Acknowledgements}
I am deeply grateful to David Damanik, who introduced me to this
topic and encouraged me to write this expository paper. Also, his
comments have been very helpful for completion of the paper. I also
thank the referee for useful suggestions on how to improve the
presentation.


\end{document}